\documentclass[a4paper,11pt,reqno]{amsart}

\usepackage[normalem]{ulem}
\usepackage{color}
\usepackage{amsmath}
\usepackage{amssymb}
\usepackage{array}
\usepackage{booktabs}
\usepackage{amsthm}
\usepackage[hidelinks]{hyperref}

\usepackage{enumitem}

\theoremstyle{plain}
\newtheorem{thm}{Theorem}[section]
\newtheorem{prop}[thm]{Proposition}
\newtheorem{cor}[thm]{Corollary}
\newtheorem{lem}[thm]{Lemma}

\theoremstyle{definition}
\newtheorem{defn}[thm]{Definition}

\theoremstyle{remark}
\newtheorem{rem}[thm]{Remark}

\newtheorem{introthm}{\bf Theorem}

\usepackage{setspace}
\newcommand{\m}{\mathfrak{m}}
\newcommand{\p}{\partial}

\newcommand{\R}{\mathbb{R}}

\newcommand{\ric}{\mathrm{Ric}}
\newcommand{\trace}{\mathrm{tr}}

\newcommand{\dive}{\mathrm{div}}
\def\tr{\textmd{tr}}
\newcommand{\btr}[1]{\left\vert#1\right\vert}
\newcommand{\Sbb}{\mathbb{S}}

\renewcommand{\d}{\,\mathrm{d}}

\renewcommand{\S}{\Sigma}

\newcounter{mnotecount}[section]

\numberwithin{equation}{section}

\title[]{Extensions of spacetime Bartnik data and estimates for the Bartnik mass outside of time-symmetry} 

\author[S. McCormick]{Stephen McCormick}
\address{Institutionen f\"or teknikvetenskap och matematik \\
	Lule{\aa} tekniska universitet \\
	971 87 Lule\aa \\
	Sweden} 
\email{stephen.mccormick@ltu.se}

\author[M. Wolff]{Markus Wolff}
\address{Institutionen f\"or Matematik \\
	Kungliga Tekniska H\"ogskolan \\
	100 44 Stockholm \\
	Sweden} 
\email{markuswo@kth.se}

\begin{document}
	
	\begin{abstract}
		Bartnik's quasi-local mass is a functional on Bartnik data $(\mathbb S^2,\gamma,H,P,\omega^\perp)$, consisting of a metric $\gamma$, scalar functions $H$ and $P$, and a 1-form $\omega^\perp$ on the $2$-sphere $\mathbb S^2$. We construct initial data $(M,g,K)$ for the Einstein equations with boundary $\Sigma\cong\mathbb S^2$, and boundary conditions for $g$ and $K$ determined by Bartnik data with $H,P$ constant and $\omega^\perp\equiv0$. Furthermore this initial data agrees with spherically symmetric initial data for a Schwarzschild spacetime outside of a compact set with controlled mass. As an application, we obtain estimates for the Bartnik mass for such Bartnik data, outside of the time-symmetric setting.
		
		We also construct initial data on the cylinder $\mathbb S^2\times[0,1]$ connecting this same class of Bartnik data to time-symmetric data so that estimates for the Bartnik mass outside of time-symmetry can be obtained from prior estimates for time-symmetric data.
	\end{abstract}
	
\maketitle

\section{Introduction}
Initial data $(M,g,K)$ for the Einstein equations consists of a Riemannian $3$-manifold $(M,g)$ equipped with a symmetric $2$-tensor $K$, which is to be understood as the second fundamental form of $(M,g)$ embedded in a $4$-dimensional Lorentzian manifold satisfying the Einstein equations. Given a bounded domain $\Omega$ in such an initial data set, a quasi-local mass is a measure of the total mass or energy contained within $\Omega$. There are many competing definitions of a good quasi-local mass, each with advantages and disadvantages. This article concerns the definition due to Bartnik \cite{Bartnik1989-QLM-PRL}, which is sometimes said to be the most likely to give the correct physical measure of mass, if only it were possible to compute. In fact, the value of the Bartnik mass is only known in very special cases. It is essentially a localisation of the ADM mass, obtained by taking the infimum of the ADM mass over a space of admissible asymptotically flat initial data sets with boundary prescribed by data from $\Omega$. A precise definition is given in Section \ref{Ssec-Bartnik}. In recent years there have been many new developments in understanding the Bartnik mass, including estimates of its value (for example,  \cite{CabreraPacheco2017-AF-CMC-Bartnik,lin2016bartnik, MantoulidisSchoen2015-Apparent-Horizons, MiaoPiubello2024,MiaoWangXie2020}). However, the majority of work to-date on the topic has been under the assumption of time-symmetry, which is to say, $K\equiv 0$. This article relaxes that assumption.

We call a tuple $(\Sigma,\gamma,H,P,\omega^\perp)$ \textit{Bartnik data}, where $\Sigma$ is diffeomorphic to the $2$-sphere, $\gamma$ is a Riemannian metric of $\S$, $H$ and $P$ are smooth functions on $\Sigma$ and $\omega^\perp$ is a smooth one form on $\Sigma$. Note that $P$ and $\omega^\perp$ give boundary conditions for $K$, so it is common in the literature (where time-symmetric is often assumed implicitly) to refer to simply $(\Sigma,\gamma,H)$ as Bartnik data. The Bartnik data can be understood as prescribing the following boundary data on the boundary $\Sigma=\p M$ of an initial data set $(M,g,K)$: the induced metric $\gamma$, the mean curvature $H$, $P=\trace_\Sigma K$, and $\omega^\perp=K(\nu,\cdot)$ for unit normal $\nu$, where the normal direction here and which defines $H$ is inward relative to the Bartnik extension $(M,g,K)$ constructed.

In this article, we provide estimates for the Bartnik mass with the assumption of time-symmetry relaxed to include nonzero $P$. Specifically, we construct initial data sets $(M,g,K)$ with controlled mass and prescribed Bartnik data $(\Sigma,\gamma,H_o,P_o,0)$, for $H_o$ and $P_o$ constant with $H_o^2\ge P_o^2$, whose ADM mass gives an upper bound for the spacetime Bartnik mass. The assumption that $H_o$ and $P_o$ be constant is not strictly necessary for the construction of extensions, but is chosen rather for simplicity since constant data can be used to estimate non-constant data via the positive mass theorem with corners (see, for example, Remark 4.3 of \cite{mccormick2020gluing}). On the other hand, the additional restriction of $\omega^\perp=0$ is a genuine restriction on the data that we hope to overcome in future work (see Remark \ref{rem-nonzerow} for some comments on this case). With that said, in the case of marginally outer trapped surface (MOTS) initial data, it turns out that $\omega^\perp$ can be handled cleanly. In fact, in his thesis, Lin \cite{Lin2024-Spin-Creased} was able to obtain an upper bound for the Bartnik mass of a stable MOTS equal to the (conjectured) lower bound from the Penrose inequality directly analogous to the optimal upper bound for a stable minimal surface in the time-symmetric case due to Mantoulidis and Schoen \cite{MantoulidisSchoen2015-Apparent-Horizons}.

The extensions that we construct are in the same spirit as several others before us \cite{CabreraPacheco2017-AF-CMC-Bartnik,CabreraPacheco2018-AH-Bartnik,MantoulidisSchoen2015-Apparent-Horizons,MiaoPiubello2024,MiaoWangXie2020,miao2018compact}, by constructing ``collar" initial data on a cylinder $M=\Sigma\times[0,1]$ then gluing it to an exterior model solution. Specifically, we extend on the idea first developed by Mantoulidis and Schoen \cite{MantoulidisSchoen2015-Apparent-Horizons} and construct a collar that can be smoothly glued to a (non-time-symmetric) spherically symmetric initial data set in a Schwarzschild spacetime. The key difference here compared to previous work is that we must simultaneously handle both the metric and the tensor $K$ along the collar, ensuring the full dominant energy condition (DEC) is preserved, not only the non-negativity of the scalar curvature. Furthermore, some additional care must be taken to ensure that the extensions contain no MOTS enclosing the boundary prescribed by the Bartnik data.

We remark that the Schwarzschild initial data that our extensions agree with outside a compact set can be quite general. That is, although asymptotically flat or asymptotically hyperboloidal initial data is more directly applicable to the Bartnik mass, we are able to construct extensions that may have general asymptotic behavior in the Schwarzschild exterior region.

Using our general construction (Theorem \ref{thm_main1}), we find criteria for given Bartnik data to admit an admissible extension.

\begin{introthm}[Corollary \ref{kor_any}]\label{thm-intromain}
	Let $(\Sigma,\gamma,H_o,P_o,0)$ be Bartnik data with $\gamma$ having strictly positive scalar curvature, where $H_o$, $P_o$ are constant with $H_o\ge \btr{P_o}>0$, and Hawking mass $\m_o\ge 0$. There are constants $0<\beta\le 1$ and $\alpha \ge 0$ depending on $\gamma$ such that if
	\begin{align}\label{eq-introAcond}
	H_o^2&<\frac{4}{r_o^2}\frac{\beta}{\alpha}
	\end{align}
	where $r_o$ is the area radius of $(\Sigma,\gamma)$, then $(\Sigma,\gamma,H_o,P_o,0)$ admits an extension $(M,g,K)$ that satisfies the DEC and contains no weakly trapped surfaces.
\end{introthm}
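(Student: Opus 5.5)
The plan is to build the extension as a collar glued to an exterior model, in the spirit of Mantoulidis--Schoen. The collar is initial data on $\Sigma\times[0,1]$ with metric
\[
g=A^2\,dt^2+r_o^2E(t)^2\gamma(t),
\]
where $\gamma(t)$ is a smooth path of metrics from $\gamma(0)=\gamma$ to the round metric. Because $\gamma$ has positive scalar curvature, this path can be taken area-preserving with $R(\gamma(t))>0$ and $\trace_{\gamma(t)}\gamma'(t)=0$. The constants $\beta$ and $\alpha$ come from this path: $\beta>0$ is a lower bound for $R(\gamma(t))r_o^2/2$, and $\alpha$ is an upper bound for $\tfrac14|\gamma'(t)|^2_{\gamma(t)}$. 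These are exactly the quantities appearing in the Mantoulidis--Schoen scalar curvature computation. The second fundamental form on the collar is taken umbilic and tangentially trace type,
\[
K=\tfrac{\phi(t)}{2}\, r_o^2E^2\gamma(t)+\psi(t)\,A^2dt^2 .
\]
This form makes $\omega^\perp=K(\nu,\cdot)=0$ automatic. The boundary conditions then reduce to two scalar ODE conditions at $t=0$: the mean curvature of the slice must equal $H_o$, which fixes $E'(0)/(AE(0))$, and $\phi(0)$ must equal $P_o$.

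The next step is to compute the energy density $\mu$ and momentum density $J$ for this data. The momentum constraint $\dive(K-(\trace K)g)$ has only a $dt$-component, which is a first-order expression in $\phi$, $\psi$ and $E$. It can therefore be made to vanish by choosing $\psi$ through an ODE, or it can merely be bounded. I would then choose $\phi$ so that the pair $(H,P)$ along the slices evolves like a spherically symmetric slice of Schwarzschild, keeping $H^2-P^2$ and the Hawking mass monotone. With this choice, $2\mu-2|J|$ reduces to
\[
R(g)-\tfrac32(\phi^2+\dots)+\dots\ \ge\ \frac{2\beta}{r_o^2E^2}-\frac{\alpha}{A^2}-(\text{terms in }H^2,P^2).
\]
Choosing $A$ large enough, and using $E$ increasing with $E(0)=1$, the DEC holds provided
\[
H_o^2<\frac{4\beta}{\alpha r_o^2}.
\]
This is precisely where condition \eqref{eq-introAcond} enters. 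At $t=1$ the slice is round with constant $H,P$, so it can be glued smoothly, via Theorem \ref{thm_main1}, to a spherically symmetric Schwarzschild slice whose mass is slightly larger than the Hawking mass. The assumption $\m_o\ge0$ is used here to guarantee that the Schwarzschild exterior is nondegenerate.

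The remaining step is to rule out weakly trapped surfaces. On the collar, each slice satisfies $H>|P|$, using $H_o\ge|P_o|$ together with the monotonicity arranged above, and $H_o\ge|P_o|>0$ keeps this strict after a small perturbation. I would foliate the whole extension by these slices followed by the Schwarzschild spheres, which are untrapped because the spherically symmetric data is chosen with $H>|P|$ outside the horizon. A maximum-principle comparison with this foliation then excludes any closed weakly trapped surface enclosing the boundary.

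The main obstacle is the simultaneous choice of $\phi$ and $\psi$. It must make $J$ small or zero, keep $H^2\ge P^2$ along the collar, and still leave the scalar curvature budget governed only by $\beta$ and $\alpha$. My first attempt would be to set $\psi=\phi$, making $K$ pure trace, $K=\tfrac{\phi}{2}\,g$ up to normalization, so that $J$ is proportional to $d\phi$. I would then take $\phi$ nearly constant over a very long collar, which costs only a small amount of $\mu$.
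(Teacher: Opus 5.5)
Your overall architecture matches the paper: a collar deforming $\gamma$ to a round metric, smooth gluing to a spherically symmetric Schwarzschild slice, and leafwise $H>\btr{P}$ plus the maximum principle. However, the step that carries the whole content of the statement, the DEC on the collar under exactly \eqref{eq-introAcond}, is asserted rather than derived, and the mechanism you propose for it fails.

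First, your displayed estimate cannot produce this condition. If taking $A$ large merely removed $\alpha/A^2$, the resulting condition would not involve $\alpha$ at all. Moreover, ``$E$ increasing'' only shrinks the one positive term $2\beta/(r_o^2E^2)$. In the paper the condition arises because the collar is cone-like: $H=2E'/(AE)$ equals $H_o$ at $t=0$ and decays like $1/E$. Hence $E(1)\approx AH_o/2$, the $\beta$-term and the $\alpha$-term both scale like $A^{-2}$, and up to constants they compete as $\beta$ against $\alpha H_o^2r_o^2$.

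Second, your concrete choice $K=\tfrac{\phi}{2}g$ with $\phi\approx P_o$ along a very long collar rules this out. Since $P=\phi$, the untrapped condition $H>\btr{\phi}\approx\btr{P_o}>0$ forces $E'/E\gtrsim A\btr{P_o}/2$, i.e.\ exponential area growth. So the $\beta$-term becomes negligible beyond arclength $O(1/\btr{P_o})$. What then remains of $2\mu-2\btr{J}$ is $-\tfrac32(H^2-\phi^2)-\tfrac2A\partial_tH-\tfrac2A\btr{\partial_t\phi}-\tfrac1{4A^2}\btr{\gamma'}^2$. (Note that $(\trace K)^2-\btr{K}^2=\tfrac32\phi^2$ enters $2\mu$ with a plus sign, not a minus.) This is non-negative only if $q=H-\btr{\phi}>0$ decreases at rate at least $\tfrac32A\btr{\phi}\,q+\tfrac1{8A}\btr{\gamma'}^2$. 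Integrating against the weight $e^{\frac32 A\btr{P_o}t}$ shows that the deformation of $\gamma$ cannot be spread over a long collar. So the real obstruction is not $J$ but the untrapped condition: $P$, and with it $H$, has to decay roughly like the inverse area radius.

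The missing idea is the paper's choice of $K$ and of the mass parameter.

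\begin{itemize}
\item Take $x\equiv L_1$ in \eqref{eq_ansatz_collar}. Then $K=DL_1u\,r_o^{-2}\gamma(s)$ is purely tangential (your $\psi=0$, not $\psi=\phi$). Hence $P=2DL_1/u$ decays like $1/u$, and $J\equiv0$ by Lemma \ref{lem_J}.
\item The metric $A^2ds^2+u(Aks)^2r_o^{-2}\gamma(s)$, with $u'=\sqrt{1-2\overline m/u+L_1^2}$, is cone-like.
\item Fixing $\overline m$ by \eqref{eq_massparameter} forces $k^2=D^2$. Then the $H$-, $P$- and $\overline m$-dependent terms in $\mu$ all cancel, leaving the condition $2\beta-2k^2-\alpha u^2/A^2>0$ (cf.\ \eqref{eq_dec_collar}).
\item By Lemma \ref{lem_stmc}, $kc=H_or_o/2$ with $c^2=1-2\overline m/r_o+L_1^2$. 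Since $u(Aks)\lesssim r_o+Akc\,s$, one gets $\alpha u^2/A^2\lesssim 2\alpha r_o^2/A^2+\alpha H_o^2r_o^2/2$.
\item So a large $A$ works as soon as $\beta>k^2+\alpha H_o^2r_o^2/4$. Because $k^2=H_o^2r_o^2/(4c^2)\to0$ as $L_1\to\infty$, this is exactly \eqref{eq-introAcond}.
\end{itemize}

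Theorem \ref{thm_main1} then supplies the gluing, using $G_x\equiv L_1^2$ and that $V_{x,\m_o}$ is non-decreasing since $\m_o\ge0$. Proposition \ref{prop_notrapped} excludes weakly trapped surfaces.
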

\begin{rem}
	The constants $\alpha$ and $\beta$ measure the roundness of $\gamma$ in some sense, with $\alpha\to 0$ and $\beta\to 1$ as $\gamma$ tends to a round metric in $C^{2,\tau}$ \cite{miao2018compact}. These constants, which are defined in Section \ref{subsec_collar}, are the same constants appearing in Theorem \ref{thm-mass-estimate} and Theorem \ref{thm-mass-est-2} below.
\end{rem}

Due to its flexibility, our construction further allows to formulate criteria for the data under which the extension satisfies a geometric condition, such as constant mean curvature (Corollary \ref{cor_cmc}), everywhere.

Inspired by the work of Lin \cite{Lin2024-Spin-Creased}, we further demonstrate that we can deform Bartnik data $(\Sigma,\gamma,H_o,P_o,0)$ to constant mean curvature (CMC) Bartnik data $(\Sigma,\widetilde{\gamma},\sqrt{H_o^2-P_o^2},0,0)$ (Theorem \ref{thm_main2}), reducing the construction of an admissible extension to the time symmetric case.

Both approaches allow us to estimate the Bartnik mass of the given data.

\begin{introthm}[Theorem \ref{thm-mb-bound}]\label{thm-mass-estimate}
	Let $(\Sigma, \gamma,H_o,P_o,0)$ be Bartnik data with $\gamma$ having strictly positive scalar curvature, where $H_o$, $P_o$ are constant with $H_o\ge \btr{P_o}>0$, satisfying
	\begin{equation*}
		\frac{H_o^2r_o^2}{4}<\frac{\beta}{1+\alpha}.
	\end{equation*}
	Then the Bartnik mass of $(\Sigma, \gamma,H_o,P_o,0)$ satisfies
	\begin{equation*}
		\m_B(\Sigma, \gamma,H_o,P_o,0)\leq \m_H(\S,\gamma,H_o,P_o,0)+\frac{H_or_o^2\sqrt{\alpha}(4-H_o^2r_o^2)}{8\sqrt{4\beta-(\alpha+1)H_o^2r_o^2}}.
	\end{equation*}
\end{introthm}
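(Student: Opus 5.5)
We propose to bound the Bartnik mass by the ADM mass of a single admissible extension. The extension will be a collar on $\Sigma\times[0,1]$ glued to spherically symmetric Schwarzschild initial data. Since Theorem \ref{thm-mass-estimate} is stated in the introduction, it is natural to prove it by optimizing the general collar construction (Theorem \ref{thm_main1}, as in Theorem \ref{thm-intromain}) over its free parameters.

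\textbf{Step 1: the collar.} Following Mantoulidis--Schoen and Miao--Xie, we take a path $\gamma(t)$, $t\in[0,1]$, from $\gamma$ to a round metric $r_o^2\gamma_{\Sbb^2}$ with constant area. The constants $\alpha,\beta$ then control the $t$-derivatives of the path and the lower bound of its Gauss curvature. On $\Sigma\times[0,1]$ we put
\begin{equation*}
g=A^2\,dt^2+E(t)^2\gamma(t),
\end{equation*}
with $E(0)=1$, $E$ increasing, and $A$ a constant. For $K$ we use an umbilic-type ansatz: $K=a(t)\,E^2\gamma(t)+b(t)\,A^2dt^2$, with $\trace_\Sigma K=2a=P$ along each slice and $K(\nu,\cdot)|_{T\Sigma}=0$. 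This matches the boundary data $(\gamma,H_o,P_o,0)$ at $t=0$, where $H=2E'/(AE)$. The DEC reduces to
\begin{equation*}
\mu\ge|J|,\qquad 2\mu=R_g+(\trace K)^2-|K|^2,\qquad J=\dive(K-\trace K\, g).
\end{equation*}
Note that $J$ has only a $dt$-component, involving $a'$ and $E'/E$. Under the ansatz, $R_g$ is controlled from below by $2K_{\gamma(t)}/E^2$ minus $\alpha$-dependent error terms in $E'/E$. We choose $E$ so that the slice quantities $H(t)$ and $P(t)$ evolve as they would for spherically symmetric slices of Schwarzschild. At $t=1$ the collar's slice data then coincides with that of a round sphere of area radius $r_1=E(1)r_o$ in a Schwarzschild initial data set of mass $m$. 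The relevant quantity is the Hawking-type expression
\begin{equation*}
\frac{r}{2}\Bigl(1-\frac{r^2(H^2-P^2)}{4}\Bigr),
\end{equation*}
which is the spacetime Hawking mass and is conserved along Schwarzschild spheres.

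\textbf{Step 2: the mass bound.} Along the collar we track this Hawking mass. With the choices of Step 1, its increase is governed by the $\alpha$ error term, while $\beta$ enters through the positivity of scalar curvature. After integrating, we get
\begin{equation*}
m=\m_H(\Sigma,\gamma,H_o,P_o,0)+\text{(error depending on }A,\alpha,\beta,H_o,r_o),
\end{equation*}
where $A$ is still free. We then choose $A$ optimally subject to the DEC constraint, which reads roughly $\beta - \tfrac{(1+\alpha)}{4}H_o^2r_o^2(\ldots)>0$. This is where the hypothesis $H_o^2r_o^2/4<\beta/(1+\alpha)$ enters. Minimizing over $A$ produces the square-root expression
\begin{equation*}
\frac{H_or_o^2\sqrt{\alpha}\,(4-H_o^2r_o^2)}{8\sqrt{4\beta-(\alpha+1)H_o^2r_o^2}}.
\end{equation*}

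\textbf{Step 3: admissibility.} We glue to the Schwarzschild data, using the smoothing from Theorem \ref{thm_main1}. We then check that the extension contains no MOTS enclosing $\Sigma$: this holds because $H>|P|$ on every collar slice, together with a foliation/maximum principle argument in the exterior region. Finally, we choose the exterior Schwarzschild slice to be asymptotically flat, so the ADM mass equals $m$, and $\m_B\le m$ follows.

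\textbf{Main obstacle.} The hard part is the coupled DEC computation, specifically bounding $|J|$ by $\mu$ while keeping the $\alpha$-error sharp enough to give exactly the stated constant. To handle this, I would first try choosing $a(t)$ so that $P/H$ is constant along the collar. This choice makes $J$ proportional to terms that are already absorbed in the time-symmetric estimate, reducing the problem to the known Miao--Xie inequality with $H^2$ replaced by $H^2-P^2$.
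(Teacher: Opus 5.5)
\textbf{Assessment: genuine gap.} Your architecture is the paper's: Miao--Xie collar, spacetime Hawking mass along the leaves, smallest $A$ allowed by the DEC, gluing to a Schwarzschild slice, then an asymptotically flat exterior. However, the step that actually produces the stated constant is left open, and the route you suggest for it yields a different estimate.

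\textbf{$J$ is not the obstacle.} The Bartnik data do not constrain $K(\nu,\nu)$, so $b$ is free. Since $J_t=\frac{2E'}{E}(b-a)-2a'$, the choice $b=a+a'E/E'$ makes $J$ vanish identically. With this choice
\[
\mu=\frac{R_{\gamma(t)}}{2E^2}-\frac{|\gamma'|^2}{8A^2}-\frac34\mathcal H^2-\frac{(\mathcal H^2)'}{2AH},
\]
while the area radius satisfies $r'=\tfrac12AHr$. Its growth rate, and hence the $\alpha$-error $r^2\alpha/A^2$, is governed by $H$, not $\mathcal H$. So there is no reduction to Miao--Xie with $H^2$ simply replaced by $H^2-P^2$.

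\textbf{Your $P/H$-constant choice gives a different bound.} Take $P/H\equiv P_o/H_o$ and $rH\equiv H_or_o$, so that $r$ is linear in $t$. The DEC becomes $\beta-\frac{\mathcal H_o^2r_o^2}{4}-\frac{\alpha H_o^2r_o^2}{4}-\frac{\alpha r_o^2}{A^2}\ge0$, and the Hawking mass at $t=1$ is $\m_H(0)\,(1+\tfrac12AH_o)$. The resulting excess is
\[
\frac{H_or_o^2\sqrt{\alpha}\,(4-\mathcal H_o^2r_o^2)}{8\sqrt{4\beta-\mathcal H_o^2r_o^2-\alpha H_o^2r_o^2}},
\]
which is not the stated term. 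As $\alpha\to0$ and $\beta\to1$, its ratio to the stated term tends to $\sqrt{(4-\mathcal H_o^2r_o^2)/(4-H_o^2r_o^2)}>1$, since $P_o\neq0$. So this route does not prove the theorem. Your Step 2 claim that minimizing over $A$ produces the stated expression is also unsupported. The excess is increasing in $A$, so one just takes the smallest admissible $A$, and the outcome depends entirely on how $K$ was chosen.

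\textbf{The missing idea.} You need a $K$ whose contribution to $\mu$ vanishes identically, while the Hawking mass grows at rate $\tfrac12(1-k^2)$ per unit area radius with the full $k=\tfrac12H_or_o$. The paper does this as follows.
\begin{enumerate}
\item It takes $K=A^2x'(u)\,ds^2+x(u)u\,r_o^{-2}\gamma(s)$. This is exactly your choice $b=a+a'E/E'$, so $J\equiv0$ by Lemma \ref{lem_J}.
\item It sets $x(r)=B/\sqrt r$ with $B=\tfrac12P_or_o^{3/2}$, so that $G_x=x^2+2rxx'\equiv0$.
\item It sets $\overline m=\tfrac12B^2$, so that $u'\equiv1$ and $u(Aks)=r_o+Aks$.
\end{enumerate}
The DEC is then literally the time-symmetric condition $\beta-k^2-\alpha(r_o^2+A^2k^2)/A^2\ge0$. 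Along the collar $\m(s)=\frac u2(1-k^2)+\frac{B^2}{2}$, so the increment is exactly $\frac{Ak}{2}(1-k^2)$. Taking $A=r_o\sqrt\alpha\,(\beta-(1+\alpha)k^2)^{-1/2}$ gives precisely the stated term.

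\textbf{Gluing details you would still need.} You must check condition (iv) of Proposition \ref{prop_gluingschwarzschild} for this $x$, namely $\m_H(1)>\tfrac12B^2$, which reduces to $k^2<1$. The passage to an asymptotically flat slice must be done further out, in the vacuum Schwarzschild region, because the gluing lemma requires the same $x$ on both sides.
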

\begin{introthm}(Corollary \ref{cor-massbound2})\label{thm-mass-est-2}
	Let $(\S,\gamma,H_o,P_o,0)$ be Bartnik data with $\gamma$ having strictly positive scalar curvature, where $H_o$, $P_o$ are constant with $H_o\ge \btr{P_o}>0$, satisfying
	\begin{equation}
		R_\gamma>\frac32\mathcal H_o^2,
	\end{equation}
	where $\mathcal H_o=\sqrt{H_o^2-P_o^2}$.
	Then if 
	\begin{equation*}
		\frac{\mathcal H_o^2r_o^2}{4}<\frac{\beta}{1+\alpha},
	\end{equation*}
	then
	\begin{equation*}
		\m_B(\S,\gamma,H_o,P_o,0)\leq\left( 1+ \left(\frac{\alpha r_o^2\mathcal H_o^2}{4\beta-(1+\alpha)r_o^2\mathcal H_o^2}\right)^{1/2} \right)\m_H(\S,\gamma,H_o,P_o,0).
	\end{equation*}
\end{introthm}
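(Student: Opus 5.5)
The plan is to reduce to the time-symmetric case via the deformation of Theorem \ref{thm_main2}, which connects $(\S,\gamma,H_o,P_o,0)$ to CMC time-symmetric data $(\S,\widetilde\gamma,\mathcal H_o,0,0)$ through a collar $\S\times[0,1]$. First I will check that this collar satisfies the DEC and contains no weakly trapped surfaces. Then I will attach an admissible time-symmetric extension of $(\S,\widetilde\gamma,\mathcal H_o,0,0)$ with controlled mass. The resulting initial data is admissible for the original Bartnik data, and its ADM mass bounds $\m_B(\S,\gamma,H_o,P_o,0)$ from above. The hypothesis $R_\gamma>\frac32\mathcal H_o^2$ is where the collar construction needs help. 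Along the deformation, the metric is rescaled and the mean curvature and $P$ are traded against each other while $H^2-P^2$ is held fixed, which keeps the Hawking mass under control. I expect the DEC along the collar to require a lower bound on the Gauss curvature of the slices in terms of $H^2-P^2$. This is the role of the condition $R_\gamma>\frac32\mathcal H_o^2$, possibly with a smallness parameter coming from the collar length.

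For the second step I will use the known time-symmetric estimate (Miao--Xie type, via the Mantoulidis--Schoen collar with constants $\alpha,\beta$ from Section \ref{subsec_collar}). For Bartnik data $(\S,\widetilde\gamma,\mathcal H,0,0)$ with $\frac{\mathcal H^2 r^2}{4}<\frac{\beta}{1+\alpha}$, it gives
\[
\m_B\le\Big(1+\Big(\tfrac{\alpha r^2\mathcal H^2}{4\beta-(1+\alpha)r^2\mathcal H^2}\Big)^{1/2}\Big)\m_H .
\]
To apply it with the same $\alpha,\beta$, I want $\widetilde\gamma$ to be homothetic, or at least isometric up to a constant, to $\gamma$. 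Then $\alpha$ and $\beta$, which are scale invariant, are unchanged, and $r\mathcal H$ equals $r_o\mathcal H_o$. The Hawking mass of the deformed data should coincide with $\m_H(\S,\gamma,H_o,P_o,0)$. The reason is that the spacetime Hawking mass depends on $H^2-P^2$ and the area, and the deformation preserves $r^2(H^2-P^2)$, hence $\m_H$. So I will verify from the statement of Theorem \ref{thm_main2} that the endpoint has area radius $r_o$ (or the appropriate scaling) and mean curvature $\mathcal H_o$, which makes the smallness condition transfer exactly.

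Finally, I will glue. The collar ends at time-symmetric CMC data, and the time-symmetric extension starts from that same data. The corner has matching metric and mean curvature and $K\equiv0$ on both sides near the interface, so the standard positive mass theorem with corners, or smoothing, allows the glued manifold to be used for the Bartnik mass. It remains to check that no weakly trapped surface encloses $\S$ across the union. In the time-symmetric part this follows from the absence of minimal surfaces, since the mean curvature is positive along a foliation. In the collar it should follow from $H>|P|$ along the slices, giving a foliation by untrapped surfaces and hence, by a maximum principle, no enclosing MOTS. I expect the main obstacle to be verifying the DEC along the deformation collar. That is where $R_\gamma>\frac32\mathcal H_o^2$ must enter: the $\frac32$ presumably comes from balancing the $|K|^2-(\trace K)^2$ terms against the scalar curvature of the warped collar metric.
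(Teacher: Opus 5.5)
Your overall strategy matches the paper's: use the collar of Theorem~\ref{thm_main2}, then attach the time-symmetric extension of Cabrera Pacheco et al.\ with the same scale-invariant $\alpha,\beta$. The DEC along the collar is already supplied by Theorem~\ref{thm_main2}, so that is not the real obstacle. One minor correction: the collar does \emph{not} preserve $r^2\mathcal H^2$. We have $\mathcal H\equiv\mathcal H_o$ on every leaf while the area grows, so the endpoint $(\S,(1+\epsilon)^2\gamma,\mathcal H_o,0,0)$ has $r\mathcal H=(1+\epsilon)r_o\mathcal H_o$. The smallness condition and the mass bound therefore transfer only in the limit $\epsilon\to0$. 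This is harmless, since the hypothesis is strict.

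The genuine gap is the final gluing. Matching metric and mean curvature across an interface and then invoking ``the positive mass theorem with corners, or smoothing'' does not produce an element of $\mathcal A(\S,\gamma,H_o,P_o,0)$.
\begin{itemize}
\item The corner positive mass theorem only gives nonnegativity of the mass.
\item Standard corner smoothing (mollify, then conformally correct by solving an elliptic equation on all of $M$) changes the metric globally. There is no control over minimal surfaces or generalised apparent horizons enclosing $\partial M$.
\item Your foliation argument for the absence of MOTS applies to the pieces before smoothing, not after.
\end{itemize}
The paper flags exactly this obstruction after Theorem~\ref{thm_main2} (citing Jauregui). It is why the naive bound $\m_B(\S,\gamma,H_o,P_o,0)\le\m_B(\S,\gamma,\mathcal H_o,0,0)$ is not claimed.

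What is missing is a \emph{local} smooth gluing that keeps the leaves mean-convex. The paper arranges this in three steps.
\begin{itemize}
\item After an approximation that perturbs $\alpha,\beta$ arbitrarily little, it takes the Miao--Xie path $\gamma(s)$ constant near $s=0$. The time-symmetric extension is then a warped product $ds^2+f(s)^2\gamma$ near its boundary, just like the $K\equiv0$ end of the collar.
\item It takes an extension of $(\S,\gamma,\mathcal H_o,0,0)$ with mass $(1+\epsilon)m_*$ and rescales it by $(1+\epsilon)^2$. The rescaled boundary data $(\S,(1+\epsilon)^2\gamma,(1+\epsilon)^{-1}\mathcal H_o,0,0)$ have the same metric as the collar end but strictly \emph{smaller} mean curvature.
\item It applies Lemma~\ref{lem-gluing} with $x\equiv0$. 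Stopping the collar slightly early gives $f_1(b_1)<f_2(a_2)$ and $f_2'(a_2)<f_1'(b_1)$. The concave interpolation and local mollification preserve the strict DEC and $f'>0$, so no MOTS arise, and the mass $(1+\epsilon)^2m_*\to m_*$.
\end{itemize}
Your choice to match $\mathcal H_o$ exactly is the borderline case this lemma does not cover. Admissibility is not established until you fix the warped-product structure over a common $\gamma$ on both sides. You then need either the mean-curvature drop or a purely local mollification of the resulting $C^{1,1}$ warping function, using the strict DEC on both sides.
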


This paper is structured as follows. Section \ref{sec_prelim} gives some standard definitions and sets the stage. Section \ref{sec_classIDS} introduces a class of initial data sets modeled on spherically symmetric graphs in Schwarzschild spacetimes, which we use to construct our Bartnik extensions. The bulk of the technical contributions are then given in Section \ref{sec_extension}, where the general construction of extensions is developed leading to Theorem \ref{thm-intromain}. Sections \ref{sec-estimate} and \ref{sec-reduct} then prove Theorems \ref{thm-mass-estimate} and \ref{thm-mass-est-2} respectively.

\section*{Acknowledgments}
SM is supported by Olle Engkvists stiftelse and foundations managed by The Royal Swedish Academy of Sciences. MW is supported by the Verg foundation.

\section{Preliminaries}\label{sec_prelim}
\subsection{Asymptotically flat initial data sets}
	An \emph{initial data set} for the Einstein equations is a triple $(M,g,K)$ given by a $3$-dimensional Riemannian manifold $(M,g)$ and a symmetric $(0,2)$-tensor $K$ satisfying the \emph{constraint equations},
	\begin{align} 
	R(g) + (\tr_{g} K)^2 - |K|^2_{g} &= 2 \mu,  \label{eq-CC-ham} \\
	\nabla^j(K_{ij} - ((\tr_{g} K) g_{ij}) )& =  J_i, \label{eq-CC-mom}
	\end{align}
	where $R(g)$ denotes the scalar curvature of $(M,g)$ and the source terms $\mu$ and $J$ represent the local \emph{energy density} and \emph{momentum density} of the matter fields, respectively\footnote{Compared to the common convention in physics, we have absorbed a factor of $8\pi$ into $\mu$ and $J$ for convenience.}. We say that $(M,g,K)$ satisfies the \emph{dominant energy condition} (DEC), which any reasonable physical matter model is expected to satisfy, if
	\begin{align}\label{eq_DEC}
	\mu\ge \btr{J}_g.
	\end{align}
	\begin{defn}\label{defi_asymflat}
		Let $(M,g,K)$ be an initial data set. We say that $(M,g,K)$ is \emph{asymptotically flat} if $\mu$, $J\in L^1(M)$ and there exists a compact set $\Omega\subset M$ such that $M\setminus \Omega$ is diffeomorphic to $\R^3 \setminus B_R(0)$ for some $R>0$, and with respect to these coordinates $g$ and $K$ satisfy the decay conditions
		\begin{align*}
		g_{ij} &=\delta_{ij} + o_2(|x|^{-\frac{1}{2}}), \\
		K_{ij} &=o_1(|x|^{-\frac32}).
		\end{align*}
	\end{defn}
	The total energy and momentum of an asymptotically flat manifold -- the \emph{ADM energy} and \emph{ADM momentum} -- are defined respectively by
	\begin{align*}
	E &= \frac{1}{16\pi} \lim\limits_{r\to\infty} \int_{S_r} (g_{ij,i} - g_{ii,j})\nu^j \, dA_{S_{r}}, \\
	p_i &= \frac{1}{8\pi} \lim\limits_{r\to\infty}  \int_{S_r} (K_{ij} -  (\tr_g K)g_{ij})\nu^j \, dA_{S_{r}},
	\end{align*}
	where $\nu$ is the outward unit normal to round spheres $S_r$ in the asymptotic end. If the DEC is satisfied then $E^2\geq p_i p^i$ by the positive mass theorem \cite{schoen1981proof,witten1981new} and the \emph{ADM mass} is defined as
	\[
	m=\sqrt{E^2-|p|^2}.
	\]
	Let $\Sigma$ be an orientable hypersurface in $(M,g,K)$ and denote by $H$ its mean curvature with respect to a unit normal $\nu$. The null expansions $\theta_\pm$ of $\Sigma$ in $(M,g,K)$ are then defined as
	\begin{align*}
	\theta_\pm=H\pm P,
	\end{align*}
	where $P= \tr_{\Sigma}K=\tr_MK-K(\nu,\nu)$. If $\theta_\pm=0$ along $\Sigma$, we call $\Sigma$ a \emph{marginally outer / inner trapped surface} (MOTS/MITS) in $(M,g,K)$. We further define the spacetime mean curvature of $\Sigma$ as $\mathcal{H}^2=H^2-P^2$. If $\mathcal{H}^2=0$ we call $\Sigma$ a \emph{generalised apparent horizon}. If $(M,g)$ embeds isometrically into a spacetime $(\mathbf{M},\mathbf{g})$ with timelike, future-pointing unit normal $\vec{n}$ and second fundamental form $K$, then the codimension-$2$ mean curvature vector $\vec{\mathcal{H}}$ of any orientable surface $\Sigma\subseteq M\subseteq \mathbf{M}$ can be decomposed as
	\[
	\vec{\mathcal{H}}=-H\nu+P\vec{n}.
	\]
	In particular, the length of the mean curvature vector is given by
	\begin{align*}
	\mathbf{g}(\vec{\mathcal{H}},\vec{\mathcal{H}})=\theta_+\theta_-=\mathcal{H}^2.
	\end{align*}
	From this, we see that a MOTS/MITS is always a generalised apparent horizon. However, the converse statement is not true in general (see \cite{carmars} for a counterexample).
	
	The \emph{Hawking mass} of a surface $\Sigma$ is given by
	\begin{equation}\label{eq-Hawkingmassdefn}
		\mathfrak{m}_{{H}}(\Sigma)=\sqrt{\frac{\btr{\Sigma}}{16\pi}}\left(1-\frac{1}{16\pi}\int_\Sigma \mathcal{H}^2\d\mu_\Sigma\right),
	\end{equation}
	where $\d\mu_\Sigma$ denotes the induced volume form and $\btr{\Sigma}$ the area of $\Sigma$, respectively.

\subsection{The Bartnik mass}\label{Ssec-Bartnik}
The Hawking mass defined above is one example of a \emph{quasi-local mass}, which assigns a scalar to a closed hypersurface\footnote{Sometimes a quasi-local mass is defined in terms of the domain bounded by the closed hypersurface instead.} $\Sigma$ in an initial data set, representing the total energy contained within $\Sigma$. The Hawking mass is considered by many to be a good definition of quasi-local mass in the case that $\Sigma$ is round, but it is well-known that the Hawking mass under-estimates the physical notion of mass. For example, it is negative for all non-round spheres in $\mathbb R^3$ (viewed as an initial data set with $K\equiv0$).

The Bartnik mass is another definition of quasi-local mass, which is generally believed to better describe the physical notion of mass but suffers the problem of being essentially impossible to compute in general. We briefly recall the definition.

We define \textit{Bartnik data} to be a tuple $(\Sigma,\gamma,H,P,\omega^\perp)$ consisting of a closed $2$-dimensional manifold $(\Sigma,\gamma)$ equipped with functions $H$ and $P$ and a one-form $\omega^\perp$. Usually (and indeed here) $\Sigma$ is taken to be a sphere. The Bartnik data is to be understood as being induced from an initial data set $(M,g,K)$ where $H$ is the mean curvature of $\Sigma\subset M$, $P=\trace_\Sigma(K)$ and $\omega^\perp_A=K_{iA}\nu^i$ where $A$ denotes an index on $\Sigma$.

The \emph{Bartnik mass} \cite{Bartnik1989-QLM-PRL} of such Bartnik data is given by
\begin{equation*}
	\m_B(\S,\gamma,H,P,\omega^\perp)=\inf\limits_{(M,g,K)\in\mathcal A(\S,\gamma,H,P,\omega^\perp)}\left\{ \m_{ADM}(M,g,K)\right\},
\end{equation*}
where $\mathcal A(\S,\gamma,H,P,\omega^\perp)$ denotes the set of \emph{admissible extensions} of the Bartnik data $(\S,\gamma,H,P,\omega^\perp)$. Namely, the set of asymptotically flat initial data sets $(M,g,K)$ with boundary $\partial M\cong\Sigma$ satisfying:
\begin{itemize}
	\item the induced Bartnik data on $\partial M$ agrees with $(\S,\gamma,H,P,\omega^\perp)$ with respect to the inward-pointing unit normal,
	\item $(M,g,K)$ satisfies the DEC,
	\item $(M,g,K)$ contains no generalised apparent horizons enclosing $\partial M$.
\end{itemize}
That is, the Bartnik mass is a localisation of the ADM mass and the admissible extensions are those that do not hide $\Sigma$ behind a horizon, while ensuring the positive mass theorem applies by imposing the DEC, both on $M$ and in a distributional sense where the boundary would be glued to a compact manifold with boundary inducing the Bartnik data. See \cite{McCormick2024-Overview-Bartnik} for a detailed discussion on the definition of the Bartnik mass and admissible extensions.

Although there is no known way to calculate this infimum in general, the construction of admissible extensions with known ADM mass immediately provides an upper bound for the Bartnik mass. Indeed many such extensions have been constructed in the time-symmetric setting in recent years (for example, \cite{CabreraPacheco2017-AF-CMC-Bartnik,MiaoPiubello2024,MiaoWangXie2020}). To the best knowledge of the authors, the only known admissible extensions constructed for the Bartnik mass outside of time-symmetry is due to Lin in his PhD thesis \cite{Lin2024-Spin-Creased} where admissible extensions of Bartnik data corresponding to a MOTS is constructed, showing the Bartnik mass of such Bartnik data is bounded above by the Hawking mass, which implies the Bartnik mass of a MOTS is equal to the Hawking mass of a MOTS if the Penrose inequality holds. This is directly analogous to the work of Mantoulidis and Schoen \cite{MantoulidisSchoen2015-Apparent-Horizons} in the time-symmetric setting.

In this article we construct new admissible extensions outside of time-symmetry and estimate the Bartnik mass from above.
	
\section{A class of almost spherically symmetric initial data sets}\label{sec_classIDS}

In this section, we describe the class of initial data sets that we want to use for our collar construction in Section \ref{sec_extension}. We note that a similar class of initial data sets has been considered by Cabrera Pacheco and the second-named author in \cite{CabreraPachecoWolff2024-NonTimeSym-Penrose}. In fact, all the initial data sets considered here are a special case of the class in \cite{CabreraPachecoWolff2024-NonTimeSym-Penrose}. Here, our choice of initial data sets is closely modeled on spherically symmetric graphs in the Schwarzschild spacetime.

\subsection{Spherically symmetric Schwarzschild graphs}\label{subsec_schwarzschild}

Consider a spherically symmetric Lorentzian manifold $(V,\mathfrak g)$ given by $V=\R\times I\times\Sbb^2$,
\begin{align}\label{eq_spacetimeclass}
\mathfrak{g}=-h(r)\d t^2+\frac{1}{h(r)}\d r^2+r^2\gamma_*,
\end{align}
where $h$ is a smooth, positive function on some interval $I\subseteq (0,\infty)$, and $\gamma_*$ denotes the round metric on the $2$-sphere. Note that Schwarzschild spacetimes lie in this class with $h(r)=1-\frac{2m}{r}$. We will consider spacelike hypersurfaces of such spacetimes given by spherically symmetric graphs over the $t=0$ slice, defined by $\{t=T(r)\}$ for some graph function $T$. As shown by the second-named author in \cite{Wolff2024-NEC-Umbilic}, the induced metric and second fundamental form of any spherically symmetric graph satisfy
\begin{align*}
	g^T&=\frac{1}{h_T(r)}\d r^2+r^2\gamma_*,\\
	K^T&=a(r)\d r^2+b(r)r^2\gamma_*,
\end{align*}
where $h_T$, $a$, $b$ are smooth functions that satisfy
\begin{align*}
	\frac{h_T-h}{r^2}&=b^2,\\
	\frac{(h_T-h)'}{r}&=2h_Tab.
\end{align*}
In fact, the discussion on spherically symmetric graphs in \cite{Wolff2024-NEC-Umbilic} shows that $(M_T,g^T,K^T)$ is fully determined by the smooth function $x=rb$, and it holds that
\begin{align*}
	g^T&=\frac{1}{h+x^2}\d r^2+r^2\gamma_*,\\
	K^T&=\frac{x'}{h+x^2}\d r^2+xr\gamma_*.
\end{align*}
Choosing a suitable constant $c$ such that\footnote{In the Schwarzschild case, as $x^2\ge 0$, it is always possible to choose $c\ge \max(2m,0)$ in view of the coordinate transformation in time symmetry. See, for example, \cite{MantoulidisSchoen2015-Apparent-Horizons}.}
\[
s(r)=\int_{c}^r\frac{1}{\sqrt{h_T(\rho)}}\d \rho,
\]
is well-defined, and therefore smooth and strictly increasing, a change of coordinates yields
\begin{align*}
	g^T&=\d s^2+v^2(s)\gamma_*,\\
	K^T&= x'(v(s))\d s^2+x(v(s))v(s)\gamma_*,
\end{align*}
where $v=s^{-1}$ satisfies
\[
v'(s)=\sqrt{h(v(s))+x^2(v(s))}.
\]
Assuming vacuum, the constraint map \eqref{eq-CC-ham}, \eqref{eq-CC-mom} vanishes identically. More generally, one can check that
\begin{align*}
	\mu_T&=\frac{2}{v^2}\left(1-h(v)+vh'\right),\\
	J_T&=0,
\end{align*}
for a spherically symmetric graph in a spacetime $(V,\mathfrak{g})$ of the form \eqref{eq_spacetimeclass}.
Before considering a more general class of initial data sets that will retain some key properties, we shall give some examples of relevant choices for $x$ (in the Schwarzschild case).
\begin{rem}\label{rem_examples}\,
\begin{enumerate}
	\item[(i)] Similar to \cite{Wolff2024-NEC-Umbilic}, an ODE argument shows that any spherically symmetric graph with constant mean curvature $3C_1$ corresponds to 
	\[
	x=C_1r-\frac{C_2}{r^2}
	\]
	for a constant $C_2\in\R$. $C_1=0$ corresponds to a maximal initial data set with $\tr_gK=0$, while $C_2=0$ corresponds to an umbilical initial data set with $K=C_1g$. The umbilical case further corresponds to the isometric embedding of the spatial Schwarzschild anti-de\,Sitter manifold for a suitable cosmological constant.
	\item[(ii)] $x=C_3\sqrt{\frac{2}{r}}$ for some constant $C_3\in\mathbb R$ corresponds to $g$ being isometric to the spatial Schwarzschild manifold of mass $m-C_3^2$. In particular, if $m\ge 0$, $x=\sqrt{\frac{2m}{r}}$ corresponds to an embedding of Euclidean space into the Schwarzschild spacetime. Note that such graphs do not give asymptotically flat initial data because $K$ decays too slowly\footnote{In fact, $K$ has exactly borderline decay, $K=O(r^{-\frac{3}{2}})$.}.
\end{enumerate}
\end{rem}

\subsection{A model class of initial data sets}\label{subsec_ClassIDS}

Similar to \cite{CabreraPachecoWolff2024-NonTimeSym-Penrose} we consider a slight generalization of spherically symmetric initial data sets that will retain a large amount of the structural properties due to their block diagonal nature. In fact, the initial data sets considered in this section are a special case of the class of initial data sets considered in \cite{CabreraPachecoWolff2024-NonTimeSym-Penrose}.

Let $f\colon(a,b)\to(0,\infty)$ be a smooth, positive function, and let $\{\gamma(s)\}_{s\in(a,b)}$ be a smooth path of metrics on $\mathbb{S}^2$ such that $\operatorname{tr}_{\gamma(s)}\gamma'(s)=0$.

We consider initial data sets $(M,g,K)$ of the form $M=(a,b)\times\mathbb{S}^2$, 
\begin{align*}
	g&=A^2\d s^2+f(s)^2\gamma(s),\\
	K&=A^2x'(f(s))\d s^2+x(f(s))f(s)\gamma(s).
\end{align*}

Using coordinates $s,x^I$ on $M$, where $x^I$ denote a choice of coordinates on $\mathbb{S}^2$, it is straightforward to check that all non-trivial Christoffel symbols are given by
\begin{align*}
	\Gamma_{sK}^I&=\frac{f'}{f}\delta_K^I+\frac{1}{2}\gamma^{IJ}\gamma'_{JK},\\
	\Gamma_{KL}^s&=-\frac{f\,f'}{A^2}\gamma_{KL}-\frac{1}{2}\frac{f^2}{A^2}\gamma'_{KL},\\
	\Gamma_{KL}^I&={}^s\Gamma_{KL}^I,
\end{align*}
where ${}^s\Gamma_{KL}^I$ denotes the Christoffel symbols with respect to the metric $\gamma(s)$, where here and in the following we often omit the explicit $s$-dependency of the path of metrics $\gamma$ for convenience.

We first observe that the momentum constraint always vanishes.
\begin{lem}\label{lem_J}
	Let $(M,g,K)$ be as above. Then $J\equiv 0$.
\end{lem}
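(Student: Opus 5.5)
The plan is to write $K$ as a pure-trace part plus a rank-one part along the normal direction of the level sets $\{s\}\times\mathbb S^2$, and then compute the divergence invariantly rather than through Christoffel symbols. I assume, as in the setup, that $A$ depends at most on $s$. Set
\[
N=A\d s,\qquad n=A^{-1}\partial_s,\qquad \varphi(s)=\frac{x(f(s))}{f(s)},\qquad \psi(s)=x'(f(s))-\frac{x(f(s))}{f(s)}.
\]
Since $xf\gamma=\frac{x}{f}f^2\gamma$, we get
\[
K=\varphi\, g+\psi\, N\otimes N,
\]
so $\tr_gK=3\varphi+\psi$ and $K-(\tr_gK)g=\psi\,N\otimes N-(2\varphi+\psi)g$.

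\textbf{Step 1: expand the divergence.} Because $g$ is parallel,
\[
J=\dive(\psi N\otimes N)-\d(2\varphi+\psi),
\]
and
\[
\dive(\psi N\otimes N)=\big(n(\psi)+\psi\,\dive n\big)N+\psi\,\nabla_nN.
\]

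\textbf{Step 2: compute the geometric terms.} First, $\nabla_n n=0$. This follows from the Christoffel symbols listed above: there is no $\Gamma^I_{ss}$, and $\Gamma^s_{ss}=A'/A$ is only a reparametrisation term, which cancels when differentiating the unit field $n$. Hence $\nabla_nN=0$. Second, $\dive n$ is the mean curvature of the level sets. From $\Gamma^I_{sI}=2f'/f+\tfrac12\tr_\gamma\gamma'$ and the assumption $\tr_{\gamma(s)}\gamma'(s)=0$, we get $\dive n=\frac{2f'}{Af}$. This is the only place the trace-free condition on the path of metrics enters.

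\textbf{Step 3: conclude.} The functions $\varphi$ and $\psi$ depend only on $s$, so $\d(2\varphi+\psi)=(2\varphi'+\psi')\d s$. Therefore all tangential components of $J$ vanish, and
\[
J(n)=A^{-1}\Big(\psi'+\frac{2f'}{f}\psi-2\varphi'-\psi'\Big)=\frac{2}{A}\Big(\frac{f'}{f}\psi-\varphi'\Big).
\]
Direct differentiation gives $\varphi'=\frac{f'\,x'(f)}{f}-\frac{f'\,x(f)}{f^2}=\frac{f'}{f}\psi$, so $J\equiv0$.

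I expect the main obstacle to be bookkeeping: checking that $\nabla_nN=0$ and $\dive n=2f'/(Af)$ really follow from the listed Christoffel symbols, especially the role of $\tr\gamma'=0$. After that the identity $\varphi'=\frac{f'}{f}\psi$ closes the argument.
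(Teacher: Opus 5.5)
Your proof is correct, and it takes a different route from the paper's. The paper works componentwise with the listed Christoffel symbols. It shows $\dive K_I=0$, computes $\dive K_s=x''(f)f'+2x'(f)\frac{f'}{f}-2x(f)\frac{f'}{f^2}$, and observes that this is exactly $\partial_s\trace K$.

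You instead split $K=\varphi\,g+\psi\,N\otimes N$ into an umbilic part and a rank-one normal part. This buys two things:
\begin{itemize}
\item The tangential components of $J$ vanish with no computation, and without using $\trace_\gamma\gamma'=0$, since $\varphi,\psi$ depend only on $s$ and $N(\partial_I)=0$.
\item The normal component reduces to the geometric facts $\nabla_n n=0$ and $\dive n=H=\frac{2f'}{Af}$, together with the scalar identity $\varphi'=\frac{f'}{f}\psi$.
\end{itemize}

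The paper's route is shorter, given that the Christoffel symbols are already on the page. Yours is more structural. It isolates exactly where the trace-free condition enters, namely only through the mean curvature of the leaves. Without that condition your computation gives $J(n)=\frac{\psi}{2A}\trace_\gamma\gamma'$, so the condition is genuinely needed unless $\psi\equiv0$, i.e.\ $x(r)=cr$ and $K$ is pure trace. Your argument also goes through unchanged if $A=A(s)$, although the paper only needs constant $A$.
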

\begin{proof}
	Recall that 
	\[
	J=\operatorname{div}(K-\trace Kg)=\dive K-\d \trace K.
	\]
	As $\trace K=x'(f)+2\frac{x(f)}{f}$, we find
	\[
	\d \trace K=f'\left(x''(f)+\frac{2x'(f)}{f}-\frac{2x(f)}{f^2}\right)\d s.
	\]
	Using the above Christoffel symbols and recalling that $\trace_\gamma \gamma'=0$, it is straightforward to check that
	\[
	\dive K_I=\frac{x(f)}{f}\dive_{\gamma}\gamma=0,
	\]
	and 
	\[
	\dive K_s=x''(f) f'+2x'(f)\frac{f'}{f}-2x(f)\frac{f'}{f^2}.
	\]
	The claim directly follows.
\end{proof}
\begin{lem}\label{lem_mu}
	Let $(M,g,K)$ be as above. Then
	\[
	\mu=\frac{\operatorname{R}_s}{2f^2}-\frac{1}{A^2f^2}\left(f'^2+2ff''\right)+\frac{1}{f^2}\left(x(f)^2+2x(f)x'(f)f\right)-\frac{1}{8A^2}\vert\gamma'\vert^2,
	\]
	where $\operatorname{R}_s$ denotes the scalar curvature of $\gamma(s)$.
\end{lem}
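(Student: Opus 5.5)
Since $2\mu = R(g) + (\trace_g K)^2 - \btr{K}_g^2$ by \eqref{eq-CC-ham}, I will compute the two pieces separately. One piece is the scalar curvature of the warped-type metric $g=A^2\d s^2+f^2\gamma(s)$. The other is the algebraic $K$-term, which is immediate from the block-diagonal form.

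For the $K$-part, take $A$ constant, as the formula implicitly assumes. Then $K^s{}_s = x'(f)$ and $K^I{}_J = \frac{x(f)}{f}\delta^I_J$. This gives
\[
\trace K = x'(f)+\frac{2x(f)}{f},\qquad
\btr{K}^2 = x'(f)^2+\frac{2x(f)^2}{f^2}.
\]
Hence
\[
(\trace K)^2-\btr{K}^2=\frac{4x(f)x'(f)}{f}+\frac{2x(f)^2}{f^2}.
\]
Halving this produces exactly the term $\frac{1}{f^2}\big(x(f)^2+2x(f)x'(f)f\big)$ in the claimed formula.

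For the curvature part, I would view $M$ as foliated by the leaves $\{s\}\times\Sbb^2$, with unit normal $\nu=A^{-1}\p_s$. The lapse $A$ is constant, so the twice-contracted Gauss equation (equivalently, the Riccati equation along normal geodesics) gives
\[
R(g) = R_{\Sigma_s} - 2\nu(H) - H^2 - \btr{\mathrm{II}}^2 .
\]
The leaf metric is $f^2\gamma(s)$, so $R_{\Sigma_s}=R_s/f^2$. From the Christoffel symbols listed above, the second fundamental form is
\[
\mathrm{II}_{KL}=\frac{1}{2A}\p_s\big(f^2\gamma_{KL}\big)=\frac{1}{A}\Big(ff'\gamma_{KL}+\tfrac12 f^2\gamma'_{KL}\Big).
\]
Because $\trace_\gamma\gamma'=0$, the mean curvature is $H=\frac{2f'}{Af}$, and $\btr{\mathrm{II}}^2=\frac{2f'^2}{A^2f^2}+\frac{1}{4A^2}\btr{\gamma'}^2_\gamma$; the cross term drops out for the same reason. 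Differentiating,
\[
\nu(H)=\frac{2}{A^2}\Big(\frac{f''}{f}-\frac{f'^2}{f^2}\Big).
\]
Substituting these into the Gauss identity gives
\[
R(g)=\frac{R_s}{f^2}-\frac{2}{A^2f^2}\big(2ff''+f'^2\big)-\frac{1}{4A^2}\btr{\gamma'}^2 .
\]
Dividing by two and adding the $K$-part yields the stated formula for $\mu$.

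The main point needing care is the tracelessness condition $\trace_{\gamma}\gamma'=0$. It must be used consistently to kill the $\gamma'$ contributions to $H$ and to the cross term in $\btr{\mathrm{II}}^2$. It is also what ensures that no $\trace_\gamma\gamma''$ term survives in $\nu(H)$: differentiating the identity $\gamma^{IJ}\gamma'_{IJ}=0$ shows that such terms cancel. I would also double-check the sign and normalisation of the Riccati identity $\nu(H)=-\btr{\mathrm{II}}^2-\ric(\nu,\nu)$ against a sanity case. Flat space, with $A=1$, $f=s$ and $\gamma$ round, should give $R=0$: indeed $2/s^2-2/s^2=0$, as required.
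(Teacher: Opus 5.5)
Your proposal is correct and follows the paper's proof: write $2\mu=R(g)+(\trace K)^2-|K|^2$, compute $\trace K$ and $|K|^2$ from the block-diagonal form, and insert the scalar curvature formula for $A^2\d s^2+f^2\gamma(s)$. The only difference is that the paper quotes that curvature formula from Miao--Xie \cite{miao2018compact}, whereas you derive it from the contracted Gauss equation and the Riccati equation for the geodesic foliation (valid since $A$ is constant); your computations of $H$, $|\mathrm{II}|^2$ and $\nu(H)$ are correct.
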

\begin{proof}
	Recall the scalar curvature of a metric as above is given by (see for example, \cite{miao2018compact})
	\[
	\operatorname{R}=\frac{\operatorname{R}_s}{f^2}-\frac{2}{A^2f^2}\left(f'^2+2ff''\right)-\frac{1}{4A^2}\vert\gamma'\vert^2.
	\]
	
	\noindent Using that
	\begin{align*}
		\trace K&=x'(f)+\frac{2x(f)}{f},\\
		\vert K\vert ^2&=(x'(f))^2+\frac{2x(f)^2}{f^2},
	\end{align*}
	the claim follows directly.
\end{proof}
\begin{cor}\label{cor_DEC}
	Let $(M,g,K)$ be as above. Then $(M,g,K)$ satisfies the dominant energy condition if and only if
	\[
	\frac{\operatorname{R}_s}{2f^2}-\frac{1}{A^2f^2}\left(f'^2+2ff''\right)+\frac{1}{f^2}\left(x^2+2xx'f\right)-\frac{1}{8A^2}\vert\gamma'\vert^2\ge 0,
	\]
	and it satisfies it in the strict sense if and only if the above inequality is strict.
\end{cor}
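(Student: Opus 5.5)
The corollary follows by combining Lemma \ref{lem_J} and Lemma \ref{lem_mu} with the definition \eqref{eq_DEC} of the dominant energy condition. The DEC requires $\mu\ge\btr{J}_g$. By Lemma \ref{lem_J}, $J\equiv0$ for every initial data set in the class under consideration. Hence $\btr{J}_g=0$, and the DEC reduces to the pointwise condition $\mu\ge 0$. Likewise, the strict DEC $\mu>\btr{J}_g$ reduces to $\mu>0$.

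It then remains to substitute the expression for $\mu$ from Lemma \ref{lem_mu}:
\[
\mu=\frac{\operatorname{R}_s}{2f^2}-\frac{1}{A^2f^2}\left(f'^2+2ff''\right)+\frac{1}{f^2}\left(x(f)^2+2x(f)x'(f)f\right)-\frac{1}{8A^2}\vert\gamma'\vert^2.
\]
This is exactly the left-hand side of the stated inequality, written with the abbreviations $x=x(f)$ and $x'=x'(f)$. So $\mu\ge0$ is equivalent to the displayed inequality, and $\mu>0$ is equivalent to its strict version. This gives both the non-strict and the strict equivalences, understood pointwise on $M$.

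There is no real obstacle here. The only point needing care is that the reduction $\btr{J}_g=0$ uses $\operatorname{tr}_{\gamma(s)}\gamma'(s)=0$, which is already built into Lemma \ref{lem_J} as a standing assumption on the path $\gamma(s)$. I would state this explicitly so that the equivalence is clearly seen to hold only within this model class.
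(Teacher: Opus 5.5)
Your proposal is correct and matches the paper's argument. The paper states this corollary without proof, as an immediate consequence of Lemma \ref{lem_J} ($J\equiv0$, so the DEC and strict DEC reduce to $\mu\ge0$ and $\mu>0$) and the formula for $\mu$ in Lemma \ref{lem_mu}. Your remark that $\trace_{\gamma(s)}\gamma'(s)=0$ is needed is also accurate: it is what makes the $s$-component of $\dive K$ agree with $\d\trace K$ in Lemma \ref{lem_J}.
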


\section{Non-time-symmetric extensions of Bartnik data}\label{sec_extension}

In this section, we consider Bartnik data $(\Sigma,\gamma_o, H_o,P_o,0)$, where $H_o$ and $P_o$ are constant, and find criteria to construct a non-time-symmetric initial data set much in the spirit of \cite{CabreraPacheco2017-AF-CMC-Bartnik}. In particular, we will first construct a collar extension that satisfies the dominant energy condition in the strict sense and deforms the initial metric to a round one. We then use a modified gluing and bending lemma to attach the collar to a spherically symmetric graph in a Schwarzschild spacetime of suitable mass.

We will discuss the applications of this to estimating the Bartnik mass of the given data in the non-time-symmetric setting in more detail in Section \ref{sec-estimate}. This follows the same idea as the time-symmetric case \cite{CabreraPacheco2017-AF-CMC-Bartnik,MantoulidisSchoen2015-Apparent-Horizons}.

\subsection{Bartnik data}\label{subsec_extension_setup}

We consider Bartnik data $(\Sigma, \gamma_o,H_o,P_o,0)$, where $H_o$ and $P_o$ are constant, and we in particular restrict ourselves to the case $\omega^\perp\equiv0$. We further assume that the data is untrapped, which is to say that $\mathcal{H}^2_o=H_o^2-P_o^2\ge 0$ {}\footnote{In fact, we assume that $H_o\ge \btr{P_o}>0$.}, and has non-negative Hawking mass
\[
	\m_o=\m_{{H}}(\Sigma)=\sqrt{\frac{\btr{\Sigma}}{16\pi}}\left(1-\frac{1}{16\pi}\int_{\Sigma}\mathcal{H}_o^2\d\mu_\Sigma\right)\ge 0,
\]
which are both physically reasonable assumptions. We further denote the area radius of $\Sigma$ by $r_o$, where $r_o$ is determined by $\btr{\Sigma}_{\gamma_o}=4\pi r_o^2$, noting that the Hawking mass satisfies $2\m_o\leq r_o$ by the untrapped condition.

Although we avoid making a specific choice of $x(r)$ in this section, we will always assume that it is a smooth function on $[2\m_o,\infty)$ such that $x(r_o)\not=0$. 

\subsection{Collar construction}\label{subsec_collar}

Our first aim is to construct a collar that deforms the metric $\gamma_o$ to a round metric and satisfies the DEC in a strict sense.

To achieve our first goal, we choose a path of metrics $\gamma(s)$ where $s\in[0,1]$, such that
\begin{itemize}
	\item[(i)] $\gamma(0)=\gamma_o$ and $\gamma(1)$ is round,
	\item[(ii)] $\trace_{\gamma(s)}\gamma'(s)=0$,
	\item[(iii)] $\gamma(s)=\gamma(1)$ on $[1-\varepsilon,1]$ for some $\varepsilon>0$.
\end{itemize}
Note that such a path exists for any initial metric $\gamma_o$ on $\mathbb{S}^2$. For example, by the Uniformisation Theorem (eg. \cite{MantoulidisSchoen2015-Apparent-Horizons}) or a suitable time reparametrisation of Hamilton's Ricci flow combined with a family of diffeomorphism on $\mathbb{S}^2$ to ensure property (ii). Note that, up to a change of coordinates, this means we have
\[
\d\mu_o=\d\mu_s=r_o^2\d{\mu_*},
\]
where $\d{\mu_*}$ denotes the volume element of the standard round metric.

Further, consider the solution $v$ of the ODE
\begin{align}\label{eq_ODE_v}
	v'(s)=\sqrt{1-\frac{2\overline{m}}{v(s)}+x^2(v(s))},
\end{align}
as before, where we consider the parameter $\overline{m}$ and the smooth function $x$ as free data. For convenience, we now consider a translation $u(s)=v(s+T)$ such that $u(0)=r_o$. We note that $u$ still satisfies \eqref{eq_ODE_v} and that such a translation is always possible if $r_o\ge 2\overline{m}$ as $v(T)=r_o$ is equivalent to
\[
T=\int_{v(0)}^{r_o}\frac{1}{\sqrt{1-\frac{2\overline{m}}{\rho}+x^2(\rho)}}\d\rho.
\]

We now make an ansatz for the collar extension initial data $(g,K)$ on $M=[0,1]\times \mathbb{S}^2$,
\begin{align}
	\begin{split}\label{eq_ansatz_collar}
		g&=A^2\d s^2+u(Aks)^2r_o^{-2}\gamma(s),\\
		K&=D\left[A^2x'(u(Aks))\d s^2+x(u(Aks))u(Aks)r_o^{-2}\gamma(s)\right],
	\end{split}
\end{align}
for constants $A, D, k$. Writing simply $x$ and $u$ in the following for simplicity, it is straightforward to check from Corollary \ref{cor_DEC}, using $f=u(Aks)$, $\widetilde{x}(r)=Dx(r)$, and $\widetilde{\gamma}(s)=r_o^{-2}\gamma(s)$, that the collar extension satisfies the strict DEC if and only if
\[
0<\frac{r_o^2\operatorname{R}_s}{u^2}-\frac{2k^2}{u^2}-\frac{1}{4A^2}\vert\gamma'\vert^2-\frac{2(k^2-D^2)}{u^2}\left(x^2+2xx'u\right),
\]
where we additionally used that $u$ satisfies \eqref{eq_ODE_v}. Since the sign of the last term in the parentheses will play a role here, it will be convenient to define the function 
\begin{align}
	G_x(r)&=x^2(r)+2x(r)x'(r)r.\label{eq_G1}
\end{align}

\begin{lem}\label{lem_stmc}
	Let $\Sigma_s$ denote the level sets of $s$. Then
	\begin{align*}
		H(s)&=\frac{2k}{u}\sqrt{1-\frac{2\overline{m}}{u}+x^2},\\
		P(s)&=\frac{2Dx}{u}.
	\end{align*}
	In particular,
	\[	
	\mathcal{H}^2(s)=\frac{4k^2}{u^2}\left(1-\frac{2\overline{m}}{u}\right)+\frac{4x^2}{u^2}\left(k^2-D^2\right).
	\]
\end{lem}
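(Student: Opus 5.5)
We compute directly from the ansatz \eqref{eq_ansatz_collar}, taking the unit normal to $\Sigma_s$ to be $\nu=A^{-1}\partial_s$. With $f(s)=u(Aks)$ we have $f'(s)=Ak\,u'(Aks)$. Since $\trace_{\gamma(s)}\gamma'(s)=0$, the trace-free part of $\gamma'$ does not contribute to the mean curvature.

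\textbf{Mean curvature.} The second fundamental form of $\Sigma_s$ is
\[
\frac{1}{2A}\partial_s\big(f^2r_o^{-2}\gamma(s)\big)=\frac{1}{A}\Big(ff'r_o^{-2}\gamma+\tfrac12 f^2r_o^{-2}\gamma'\Big).
\]
This agrees with the Christoffel symbols $\Gamma^s_{KL}$ listed in Section~\ref{subsec_ClassIDS}. Tracing with respect to $f^{-2}r_o^2\gamma^{-1}$ kills the $\gamma'$ term and gives
\[
H=\frac{2f'}{Af}=\frac{2k\,u'(Aks)}{u}.
\]
Substituting the ODE \eqref{eq_ODE_v} for $u'$ gives the stated formula for $H(s)$.

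\textbf{The term $P$.} We have $P=\trace_{\Sigma_s}K$. The tangential block of $K$ is $D\,x(u)\,u\,r_o^{-2}\gamma(s)$, and the induced metric is $u^2r_o^{-2}\gamma(s)$. Tracing gives
\[
P=D\,x\,u\cdot\frac{2}{u^2}=\frac{2Dx}{u}.
\]
Equivalently, $P=\trace K-K(\nu,\nu)$, where $K(\nu,\nu)=A^{-2}\cdot DA^2x'=Dx'$. This cancels the $Dx'$ part of $\trace K=D\big(x'+2x/u\big)$ and leaves the same expression.

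\textbf{Spacetime mean curvature.} Finally,
\[
\mathcal H^2=H^2-P^2=\frac{4k^2}{u^2}\Big(1-\frac{2\overline m}{u}+x^2\Big)-\frac{4D^2x^2}{u^2}.
\]
Regrouping the $x^2$ terms gives
\[
\mathcal{H}^2(s)=\frac{4k^2}{u^2}\left(1-\frac{2\overline{m}}{u}\right)+\frac{4x^2}{u^2}\left(k^2-D^2\right),
\]
as claimed.

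The only point needing care is the sign and normalization conventions. The normal must be $A^{-1}\partial_s$, pointing into the collar in the direction of increasing $s$; this matches the convention that $H$ is computed with respect to the inward normal relative to the extension. One must also keep the chain-rule factor $Ak$ from $f(s)=u(Aks)$, which is what produces the $k$ in $H$. All of this is routine.
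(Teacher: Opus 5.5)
Your proof is correct and follows essentially the same route as the paper: take $\nu=A^{-1}\partial_s$, compute the second fundamental form of $\Sigma_s$ (you via $\tfrac{1}{2A}\partial_s$ of the induced metric, the paper via $-A\Gamma^s_{IJ}$, which is the same quantity), trace it using $\trace_{\gamma}\gamma'=0$ and the ODE for $u$, trace $K$ over the induced metric $u^2r_o^{-2}\gamma$ to get $P$, and regroup $H^2-P^2$. Your bookkeeping of the chain-rule factor $Ak$ and of the $r_o^{-2}$ factors is accurate.
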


\begin{proof}
	From \eqref{eq_ansatz_collar}, it is straightforward to see that $\nu_s=\frac{1}{A}\partial_s$ and that the second fundamental form $A(s)$ of $\Sigma_s$ thus satisfies
	\[
	A(s)_{IJ}=-g(\nabla_{\partial_I}\partial_J,\nu_s)=-A\Gamma_{IJ}^s.
	\]
	Hence, 
	\[
	A(s)_{IJ}=k\frac{u u'}{r_o^2}\gamma_{IJ}+\frac{1}{2}\frac{u^2}{A}\gamma'_{IJ}.
	\]
	The assertion for $H(s)$ then follows upon taking a trace, recalling that $u$ satisfies \eqref{eq_ODE_v} and $\trace_\gamma \gamma'=0$. As $P(s)=\trace_{\Sigma_s}K$, the claim immediately follows from observing that $\Sigma_s$ has induced metric $u^2r_o^{-2}\gamma$. Finally, the last claim holds by definition as
	\[
	\mathcal{H}^2(s)=H(s)^2-P(s)^2. 
	\]
\end{proof}
We now define the constants
\begin{align*}
	\alpha&=\max\limits_{[0,1]\times \Sigma}\frac{1}{4}\vert\gamma'\vert^2\ge 0,\\
	\beta&=\frac{1}{2}\min\limits_{[0,1]\times \Sigma}(r_o^2R(\gamma(s)))\le 1,
\end{align*}
which only depend on the path of metrics $\gamma(s)$, or once the method of path construction is fixed, only on $\gamma_o$. We note that the upper bound on $\beta$ follows from the Gauss--Bonnet Theorem, and in fact $\alpha=0$ if and only if $\beta=1$, in which case $\gamma_o$ is a round metric. These constants were introduced by Miao and Xie \cite{miao2018compact} and used in several of the estimates for the Bartnik mass in the time-symmetric setting (for example, \cite{CabreraPacheco2017-AF-CMC-Bartnik,CabreraPacheco2018-AH-Bartnik,MiaoPiubello2024,MiaoWangXie2020}). As shown by Miao and Xie \cite{miao2018compact}, it is possible to choose the path $\gamma(s)$ such that if $\gamma$ is $C^{2,\tau}$-close to a round metric $\gamma_*$, these constants satisfy
\begin{equation*}
	\alpha\leq C \| \gamma - \gamma_*\|_{C^{0,\tau}(\Sigma)}^2,\qquad \text{ and }(1-\beta)\leq C\|\gamma-\gamma_*\|_{C^{2,\tau}(\Sigma)}.
\end{equation*}

In the following, we will always assume that $\beta>0$. Note that positive scalar curvature is preserved under Hamilton's Ricci flow, so provided that $\gamma_o$ has positive scalar curvature we can always ensure $\beta>0$ by constructing the path of metrics this way.

Using Lemma \ref{lem_stmc}, we conclude that the collar extension satisfies the DEC in the strict sense in terms of the Bartnik data $\gamma_o,H_o,P_o$ if
\begin{align}\label{eq_dec_collar}
	0<2\beta-2k^2-\frac{u^2\alpha}{A^2}+\frac{G_x(u)}{x(r_o)^2}\left(2k^2\left(1-\frac{2\overline{m}}{r_o}\right)-\frac{r_o^2}{2}\mathcal{H}^2(0)\right),
\end{align} 
where we recall that $G_x(r)=x^2(r)+2x(r)x'(r)r$.
Note further that the constants $D$, $k$ are fully determined by the Bartnik data and a choice of $\overline m$ and $x$, since Lemma \ref{lem_stmc} gives 
\begin{align}\begin{split}\label{eq-kHP}
	H_o&=H(0)=\frac{2k}{r_o}\sqrt{1-\frac{2\overline{m}}{r_o}+x^2(r_o)},\\
	P_o&=P(0)=2D\frac{x(r_o)}{r_o}.\end{split}
\end{align}

We observe that the first three terms in \eqref{eq_dec_collar} are exactly the same terms as in the time-symmetric case (see, for example, \cite{CabreraPacheco2017-AF-CMC-Bartnik}), so it suffices to ensure that the last term remains non-negative, and treat the remaining similar to the time-symmetric case.

Note that
\begin{align*}
	2k^2\left(1-\frac{2\overline{m}}{r_o}\right)-\frac{r_o^2}{2}\mathcal{H}^2(0)
	&=\frac{r_o^2}{2}\left(P_o^2-\frac{x^2(r_o)}{1-\frac{2\overline{m}}{r_o}+x^2(r_o)}H_o^2\right).
\end{align*}
Recall that we always assume that $\mathcal{H}^2_o=H_o^2-P_o^2\ge 0$. Hence, if $2\overline{m}\le {r_o}$, we observe that the above term is non-positive for $r_o=2\overline{m}$, and non-negative for MOTS/MITS data. It vanishes identically iff
\begin{align}\label{eq_k^2eqD^2}
x^2(r_o)H_o^2=\left(1-\frac{2\overline{m}}{r_o}+x^2(r_o)\right)P_o^2,
\end{align}
which we will later ensure by choosing the parameter $\overline{m}$ appropriately. For example, in the case $r_o=2\overline{m}$, \eqref{eq_k^2eqD^2} is satisfied for MOTS/MITS data. Note that, unless this term vanishes identically, the sign of the last term in \eqref{eq_dec_collar} still depends on the sign of $G_x$. In what follows, we develop the collar construction in a general way as it may be of independent interest, however later we will restrict our attention to construct extensions such that \eqref{eq_k^2eqD^2} is satisfied in the collar region.

We are now ready to state our collar construction.

\begin{prop}\label{prop_collarextension}\,\newline
	Let $(\mathbb{S}^2, \gamma_o,H_o,P_o)$ with $H_o\ge \vert P_o\vert >0$, area radius $r_o$ and $\alpha$, $\beta$ as above with $\alpha\ge 0$, $0<\beta\le 1$. Let $x\colon[r_o,\infty)\to\mathbb{R}$ be a smooth function with
	$x(r_o)\not=0$ and 
	\begin{align*}
	\vert x(r)\vert \le L_1+L_2r
	\end{align*}
	for non-negative constants $L_1$, $L_2$. Let $\overline{m}\in(-\infty,\frac{1}{2}r_o]$. Assume that
	\begin{align*}
	(a)\begin{cases}
			H_o^2\ge \frac{1-\frac{2\overline{m}}{r_o}+x^2(r_o)}{x^2(r_o)}P_o^2\text{ and } G_x\le 0\\
			or\\
			H_o^2\le \frac{1-\frac{2\overline{m}}{r_o}+x^2(r_o)}{x^2(r_o)}P_o^2\text{ and } G_x\ge 0.
	\end{cases}
	\end{align*}
	Assume further that
	\[
		H_o^2< C\frac{4}{r_o^2}\left(1-\frac{2\overline{m}}{r_o}+x^2(r_o)\right),
	\]
	where $C=C(\alpha,\beta,r_o,m,L_1,L_2)\le 1$ is defined as
	\begin{align*}
	C=\begin{cases} \frac{\beta}{1+{\alpha}(1-\frac{2\overline{m}}{r_o}+L_1^2)},&\text{if }L_2=0,\\
	\frac{\beta}{1+\alpha\left(e^{2L_2}r_o^2+\frac{(e^{L_2}-1)^2}{L_2^2}\left(1-\frac{2\overline{m}}{r_o}+L_1^2\right)\right)},&\text{if }L_2>0.
	\end{cases}
	\end{align*}
	Then $(\Sigma,\gamma_o,H_o,P_o)$ admits a smooth collar extension $(M,g,K)$ with $M=[0,1]\times \mathbb{S}^2$,
	\begin{align*}
	g&=A^2\d s^2+u(Aks)^2r_o^{-2}\gamma(s),\\
	K&=D\left(A^2x'(u(Aks))\d s^2+x(u(Aks))u(Aks)r_o^{-2}\gamma(s)\right),
	\end{align*}
	that strictly satisfies the dominant energy condition, where $u$ satisfies $u(0)=r_o$ and solves
	\[
	u'(s)=\sqrt{1-\frac{2\overline{m}}{u}+x^2(u)},
	\]	
	and $D=D(x(r_o),P_o)$, $A=A(\alpha,\beta,r_o,\overline{m},L_1,L_2,H_o)$ are fully determined by the Bartnik data and $\overline{m}$.
\end{prop}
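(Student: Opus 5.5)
We choose the constants in the ansatz \eqref{eq_ansatz_collar} directly from the Bartnik data and then verify the strict DEC criterion \eqref{eq_dec_collar} on $[0,1]\times\mathbb S^2$.

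\emph{Fixing $D$ and $k$.} Take $D=P_or_o/(2x(r_o))$, which is possible since $x(r_o)\neq0$. Take $k>0$ with $k^2=H_o^2r_o^2/\big(4(1-\frac{2\overline m}{r_o}+x^2(r_o))\big)$. We use the positive root of the ODE for $u$, and the denominator is positive because $\overline m\le r_o/2$ and $x(r_o)\ne0$. By Lemma \ref{lem_stmc} and \eqref{eq-kHP}, the level set $s=0$ then carries $\gamma(0)=\gamma_o$ (since $u(0)=r_o$), mean curvature $H_o$ and $P=P_o$. Also $\omega^\perp=0$, because $K$ has no mixed $ds\,dx^I$ terms. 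The hypothesis $H_o^2<C\frac4{r_o^2}(\cdots)$ is precisely $k^2<C$.

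\emph{The $G_x$ term.} By the identity preceding \eqref{eq_k^2eqD^2}, the bracket multiplying $G_x(u)/x(r_o)^2$ in \eqref{eq_dec_collar} equals $\frac{r_o^2}{2}\big(P_o^2-\frac{x^2(r_o)H_o^2}{1-2\overline m/r_o+x^2(r_o)}\big)$. Under the first alternative of (a), this bracket is $\le0$ and $G_x\le0$. Under the second alternative, the bracket is $\ge0$ and $G_x\ge0$. In both cases the last term of \eqref{eq_dec_collar} is non-negative, so it suffices to show
\[
2\beta-2k^2-\frac{\alpha\,u(Aks)^2}{A^2}>0\quad\text{for } s\in[0,1].
\]
I will re-derive \eqref{eq_dec_collar} from Corollary \ref{cor_DEC} using $u'^2=1-2\overline m/u+x^2$ and $u''=\overline m/u^2+xx'$. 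This confirms that the $\overline m$ terms cancel and that the remainder is exactly $(k^2-D^2)G_x$ combined with \eqref{eq-kHP}.

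\emph{Bounding $u$ along the collar.} This is the main obstacle. We have $u'\le\sqrt{1-2\overline m/r_o+(L_1+L_2u)^2}$ for $u\ge r_o$, and $u$ is increasing. Write $B^2=1-2\overline m/r_o$. Using $\sqrt{B^2+(L_1+L_2u)^2}\le\sqrt{B^2+L_1^2}+L_2u$, Grönwall gives, for $t=Aks\le Ak$:
\begin{itemize}
\item if $L_2=0$: $u(t)\le r_o+t\sqrt{B^2+L_1^2}$;
\item if $L_2>0$: $u(t)\le e^{L_2t}r_o+\frac{e^{L_2t}-1}{L_2}\sqrt{B^2+L_1^2}$.
\end{itemize}
Then $u^2/A^2$ is bounded via $(a+b)^2\le2a^2+2b^2$ or a similar weighted split.

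\emph{Choosing $A$.} The aim is to take $A$ of order $1/k$, or more simply $Ak\le1$, so that $e^{L_2Ak}\le e^{L_2}$. This produces exactly the shape of $C$, namely $\alpha\big(e^{2L_2}r_o^2+\frac{(e^{L_2}-1)^2}{L_2^2}(B^2+L_1^2)\big)$, divided by $A^2$ with $A$ normalised. Take $A=1/k$ (adjusting the normalisation if necessary). Then $\alpha u^2/A^2\le k^2\alpha\,\Lambda$, where $\Lambda$ is the bracket in $C$. The condition becomes $2\beta>2k^2(1+\alpha\Lambda/2)$, which follows from $k^2<C$.

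The only uncertain point is matching the precise constants, in particular the factor of $r_o^2$ and of $2$. I expect to settle these by choosing $A$ proportional to $r_o/k$ or similar, and by exploiting the slack from $\beta\le1$. Finally, smoothness is immediate: $u$ is smooth because the ODE's right-hand side is positive, and $\gamma(s)$ is smooth. This gives the collar with $D$ and $A$ determined as claimed.
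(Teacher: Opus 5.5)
Your reduction matches the paper's. You fix $D$ and $k$ from \eqref{eq-kHP}, use condition (a) to make the $G_x$ term in \eqref{eq_dec_collar} non-negative, and reduce to $2\beta-2k^2-\alpha u(Aks)^2/A^2>0$.

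For $L_2>0$, your choice $A=1/k$ works once the factor is corrected. With $Aks=s\le 1$, your Gr\"onwall bound and $(a+b)^2\le 2a^2+2b^2$ give $\alpha u^2/A^2\le 2\alpha k^2\Lambda$, not $\alpha k^2\Lambda$. The condition $2\beta-2k^2-2\alpha k^2\Lambda>0$ is then exactly $k^2<\beta/(1+\alpha\Lambda)=C$. This is a slightly simpler variant of the paper's choice $A^2=\alpha\Lambda/(\beta-k^2)$, which also has $Ak<1$, and it even gives strict inequality at $s=1$.

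The gap is the case $L_2=0$. There $C=\beta/\big(1+\alpha(B^2+L_1^2)\big)$ has no $r_o^2$ term, and no choice with $Ak\le 1$ (in particular $A=1/k$) can reach it. From $u(Aks)\le r_o+Aks\sqrt{B^2+L_1^2}$ you get
\[
\frac{\alpha\, u(Aks)^2}{A^2}\le \frac{2\alpha r_o^2}{A^2}+2\alpha k^2s^2\big(B^2+L_1^2\big).
\]
With $A\le 1/k$ the first term is at least $2\alpha k^2r_o^2$. So you would only prove the result under the stronger hypothesis $k^2<\beta/\big(1+\alpha(r_o^2+B^2+L_1^2)\big)$, and the slack from $\beta\le 1$ does not help. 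The missing observation is that, because the growth is linear, the second term does not depend on $A$, so you should take $A$ large rather than $Ak\le 1$. The paper sets
\[
A^2=\frac{r_o^2\alpha}{\beta-k^2\big(1+\alpha(B^2+L_1^2)\big)}.
\]
This is positive exactly because $k^2<C$, and it yields $2\beta-2k^2-\alpha u^2/A^2\ge 2\alpha k^2(1-s^2)(B^2+L_1^2)\ge 0$. The inequality is strict for $s<1$, and near $s=1$ one uses $\gamma'\equiv 0$ on $[1-\varepsilon,1]$. Any strictly larger $A$ gives strictness everywhere.

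Separately, your bound $u'\le\sqrt{B^2+(L_1+L_2u)^2}$ for $u\ge r_o$ holds only when $\overline m\le 0$. For $0<\overline m\le r_o/2$ one has $1-2\overline m/u\ge 1-2\overline m/r_o$ whenever $u\ge r_o$, so the inequality runs the wrong way, and the valid bound has $1$ in place of $B^2$. The paper's proof uses exactly the same estimate. So this is a defect of the argument for the stated constant $C$ when $\overline m>0$, not something specific to your approach.
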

\begin{rem}\label{rem_collarextension}\,
	Note that the additional condition on $x$ 
	\[ 	
	\btr{x(r)}\le L_1+L_2r
	\] 
	is necessary to obtain some growth control on $u$. We expect that there is a plethora of possible conditions on $x$ to achieve this. Here, we chose this rather simple condition as it is satisfied for all examples in Remark \ref{rem_examples} for suitable choices of $L_1$ and $L_2$.
\end{rem}

\begin{proof}
	Using condition (a), it suffices to prove that 
	\[
		2\beta-2k^2-\frac{u^2\alpha}{A^2}> 0,
	\]
	where we make use of \eqref{eq_dec_collar}. Recall that 
	\[
		H_o^2=\frac{4k^2}{r_o^2}\left(1-\frac{2\overline{m}}{r_o}+x^2(r_o)\right)
	\]
	so $k^2< C=C(\alpha,\beta,L_1,L_2)\le \beta$ and therefore if $\alpha=0$ there is nothing left to prove. We now assume $\alpha>0$ {}\footnote{As $\btr{\gamma'}(s)=0<\alpha$ for $s\in (1-\varepsilon,1]$ in this case, we only need that the above inequality holds strictly on $[0,1-\varepsilon]$.} and consider two cases, making the distinction between $L_2=0$ and $L_2>0$.
	
	We first assume that $L_2=0$. Note that $u'\le \sqrt{1-\frac{2\overline{m}}{r_o}+L_1^2}$ and therefore
	\[
	u^2(Aks)\le 2 \left(1-\frac{2\overline{m}}{r_o}+L_1^2\right)A^2k^2s^2+2r_o^2.
	\]
	Hence, we find that by defining
	\[
	A={r_o}\left(\frac{\alpha}{\beta-k^2\left(1+{\alpha}\left(1-\frac{2\overline{m}}{r_o}+L_1^2\right)\right)}\right)^{\frac{1}{2}}>0,
	\]
	which is well defined as $k^2<C$, we have
	\begin{align*}
	2\beta-2k^2-\frac{u^2\alpha}{A^2}
	&\ge 2\beta-2k^2-2\left(1-\frac{2\overline{m}}{r_o}+L_1^2\right)\alpha k^2s^2-2\frac{r_o^2\alpha}{A^2}\\
	&\ge 2\beta-2k^2-2\left(1-\frac{2\overline{m}}{r_o}+L_1^2\right)\alpha k^2-2\frac{r_o^2\alpha}{A^2}\\
	&=0,
	\end{align*}
	where we note that the second inquality is strict unless $s=1$. This shows the claim for $L_2=0$.
	
	We now assume $L_2>0$. Then $u'\le \sqrt{1-\frac{2\overline{m}}{r_o}+L_1^2}+L_2u$, therefore ODE comparison yields
	\[
		u(Aks)\le \frac{\sqrt{1-\frac{2\overline{m}}{r_o}+L_1^2}+L_2r_o}{L_2}e^{L_2Aks}-\frac{\sqrt{1-\frac{2\overline{m}}{r_o}+L_1^2}}{L_2},
	\] 
	so
	\[
		u^2(Aks)\le 2e^{2L_2}r_o^2+2\frac{(e^{L_2}-1)^2}{L_2^2}\left(1-\frac{2\overline{m}}{r_o}+L_1^2\right).
	\]
	We now define 
	\[
		A=\left(\frac{\alpha\left(e^{2L_2}r_o^2+\frac{(e^{L_2}-1)^2}{L_2^2}\left(1-\frac{2\overline{m}}{r_o}+L_1^2\right)\right)}{\beta-k^2}\right)^\frac{1}{2}
	\]
	which is well-defined as $k^2<C\le \beta$. Note moreover that $k^2<C$ implies that $Ak\le 1$. Similar to before, we conclude that this choice of $A$ and the above ODE comparison yield
	\begin{align*}
		2\beta-2k^2-\frac{u^2\alpha}{A^2}
		&=
		\frac{1}{A^2}\left((2\beta-2k^2)A^2-\alpha u^2\right)
		\ge 0
	\end{align*}
	where the inequality is strict except possibly at $s=1$. This shows the claim for $L_2>0$.
\end{proof}

While Proposition \ref{prop_collarextension} should prove to be a flexible tool, the main aim of this paper is to give some clean conditions on the existence of Bartnik extensions and in particular to give an upper bound on the Bartnik mass. To this end, we will from now on always ensure that condition (a) in Proposition \ref{prop_collarextension} is satisfied. We can do this by either restricting the choice of $x$ so that $G_x\equiv0$ or by ensuring the equality case of \eqref{eq_k^2eqD^2} so that condition (a) is satisfied irrespective of the sign of $G_x$\footnote{In particular, the last term in the energy density $\mu$ vanishes identically. In fact, this exact term also appears in the derivative of the Hawking mass. That is, $G_x\equiv 0$ or condition \eqref{eq_k^2eqD^2} later additionally serve to give a sharper bound on the Bartnik mass.}. In this section we impose the latter by choosing $\overline{m}$ carefully, and then in Section \ref{sec-estimate} we instead prescribe the function $x$.

Rearranging \eqref{eq_k^2eqD^2} gives
\begin{align}\label{eq_massparameter}
	\overline{m}=\frac{r_o}{2}\left(1-(H_o^2-P_o^2)\frac{x(r_o)^2}{P_o^2}\right)\in (-\infty,\frac{r_o}{2}],
\end{align}
which we from now on take as the definition of the choice of parameter $\overline{m}$. Note that for this choice of $\overline{m}$, 
\[
	H_o^2\le C\frac{4}{r_o^2}\left(1-\frac{2\overline{m}}{r_o}+x(r_o)^2\right)\Leftrightarrow P_o^2\le C\frac{4}{r_o^2}x(r_o)^2,
\]
which as $C<1$ also shows that $\overline{m}<\m_o$ unless $\mathcal{H}^2_o=0$, in which case $\overline{m}=\m_o=\frac{r_o}{2}$. Finally, note that the constant $C$ depends itself on
\[
	1-\frac{2\overline{m}}{r_o}=\mathcal{H}_o^2\frac{x(r_o)^2}{P_o^2}.
\]
By a further rearrangement, we obtain the following simplified collar construction.

\begin{cor}[Simplified Collar Construction]\label{prop_collarextension_simplified}\,\newline
	Let $(\mathbb{S}^2, \gamma_o,H_o,P_o)$ with $H_o\ge \vert P_o\vert >0$, area radius $r_o$ and $\alpha$, $\beta$ as above with $\alpha\ge 0$, $0<\beta\le 1$. Let $x\colon[r_o,\infty)\to\mathbb{R}$ be a smooth function with
	$x(r_o)\not=0$ and $\vert x(r)\vert \le L_1+L_2r$ for non-negative constants $L_1$, $L_2$. Furthermore let
	\begin{align*}
		C_1&=
		\begin{cases}
			\frac{\beta}{\alpha}&L_2=0,\\
			\frac{\beta L_2^2}{\alpha(e^{L_2}-1)^2}&L_2>0,
		\end{cases}\\
		C_2&=
		\begin{cases}
			\frac{4\beta-\alpha r_o^2\mathcal{H}_o^2}{r_o^2\left(1+\alpha L_1^2\right)}&L_2=0,\\
			\frac{4L_2^2\beta-\alpha(e^{L_2}-1)^2r_o^2\mathcal{H}_o^2}{r_o^2L_2^2\left(1+\alpha\left(e^{2L_2}r_o^2+ \frac{(e^{L_2}-1)^2}{L_2^2}L_1^2\right)\right)}&L_2>0.
		\end{cases}
	\end{align*}
	
	\noindent If $\mathcal H_o$ and $P_o$ satisfy
	\[
		\mathcal{H}_o^2< \frac{4}{r_o^2}C_1,\text{ and }P_o^2< C_2x(r_o)^2,
	\] 
	then $(\Sigma,\gamma_o,H_o,P_o)$ admits a smooth collar extension $(M,g,K)$ with $M=[0,1]\times \mathbb{S}^2$,
	\begin{align*}
	g&=A^2\d s^2+u(Aks)^2r_o^{-2}\gamma(s),\\
	K&=D\left(A^2x'(u(Aks))\d s^2+x(u(Aks))u(Aks)r_o^{-2}\gamma(s)\right),
	\end{align*}
	that strictly satisfies the dominant energy condition.
\end{cor}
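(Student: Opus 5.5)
The plan is to deduce the statement directly from Proposition \ref{prop_collarextension}, with $\overline{m}$ fixed by \eqref{eq_massparameter}, i.e.
\[
\overline{m}=\frac{r_o}{2}\left(1-\mathcal{H}_o^2\frac{x(r_o)^2}{P_o^2}\right).
\]
Since $P_o\neq0$, this is well defined. Since $\mathcal{H}_o^2\ge0$, we have $\overline{m}\in(-\infty,\frac{r_o}{2}]$, which is the range allowed in Proposition \ref{prop_collarextension}; in particular $r_o\ge 2\overline{m}$, so the translated solution $u$ of \eqref{eq_ODE_v} with $u(0)=r_o$ exists. It is convenient to write
\[
Q:=1-\frac{2\overline{m}}{r_o}=\mathcal{H}_o^2\frac{x(r_o)^2}{P_o^2}\ge0.
\]

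First, I check condition (a). By construction \eqref{eq_k^2eqD^2} holds, i.e. $x(r_o)^2H_o^2=(Q+x(r_o)^2)P_o^2$. Both inequalities on $H_o^2$ in (a) therefore hold with equality. One of the two alternatives in (a) is then satisfied whatever the sign of $G_x$ is: the first if $G_x\le0$, the second if $G_x\ge0$. More directly, the last term in \eqref{eq_dec_collar} vanishes identically. Hence no assumption on $G_x$ is needed, which is why the corollary imposes none.

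Second, the remaining hypothesis of the Proposition is $H_o^2<C\frac{4}{r_o^2}(Q+x(r_o)^2)$. By the equivalence noted after \eqref{eq_massparameter}, this is the same as
\[
P_o^2<C\,\frac{4}{r_o^2}\,x(r_o)^2 .
\]
The only subtlety, and the step that needs care, is that $C$ itself depends on $Q$, hence on $P_o$. So the inequality must be unwound rather than read off.

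For $L_2=0$ we have $C=\beta/(1+\alpha(Q+L_1^2))$. Clearing the (positive) denominator, the condition becomes
\[
P_o^2(1+\alpha L_1^2)+\alpha\,\mathcal{H}_o^2x(r_o)^2<\frac{4\beta}{r_o^2}x(r_o)^2,
\]
that is,
\[
P_o^2(1+\alpha L_1^2)<x(r_o)^2\,\frac{4\beta-\alpha r_o^2\mathcal{H}_o^2}{r_o^2},
\]
which is exactly $P_o^2<C_2x(r_o)^2$.

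For $L_2>0$, set $c=(e^{L_2}-1)^2/L_2^2$. Then $C=\beta/(1+\alpha(e^{2L_2}r_o^2+c(Q+L_1^2)))$, and the same manipulation gives
\[
P_o^2\bigl(1+\alpha(e^{2L_2}r_o^2+cL_1^2)\bigr)<x(r_o)^2\left(\frac{4\beta}{r_o^2}-\alpha c\,\mathcal{H}_o^2\right),
\]
which again is $P_o^2<C_2x(r_o)^2$.

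The hypothesis $\mathcal{H}_o^2<\frac{4}{r_o^2}C_1$ says precisely that the numerator of $C_2$ is positive, i.e. $4\beta-\alpha r_o^2\mathcal{H}_o^2>0$, respectively $4L_2^2\beta-\alpha(e^{L_2}-1)^2r_o^2\mathcal{H}_o^2>0$. So $C_2>0$ and the second hypothesis is not vacuous. (When $\alpha=0$ this condition is void; $C_1$ should then be read as $+\infty$.)

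With both hypotheses of Proposition \ref{prop_collarextension} verified, it produces the collar $(g,K)$ of the stated form satisfying the strict DEC. Here $k$ and $D$ are determined by \eqref{eq-kHP}, and $A$ is as in the proof of the Proposition.
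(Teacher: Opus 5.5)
Your proposal is correct and follows the paper's own route. Fixing $\overline{m}$ by \eqref{eq_massparameter} forces $k^2=D^2$, so the $G_x$-term in \eqref{eq_dec_collar} vanishes identically. Your ``more directly'' argument is the one that genuinely works here, since a sign-changing $G_x$ satisfies neither alternative of (a) literally. Unwinding the dependence of $C$ on $Q=1-2\overline{m}/r_o=\mathcal{H}_o^2x(r_o)^2/P_o^2$ is exactly the paper's ``further rearrangement'', and your algebra checks out for both $L_2=0$ and $L_2>0$.

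One caveat concerns Proposition \ref{prop_collarextension} rather than your deduction. The $Q$ appearing in $C$ comes from the bound $u'\le\sqrt{1-\frac{2\overline{m}}{r_o}+L_1^2}$ (and its $L_2>0$ analogue). That bound holds for $\overline{m}\le 0$ but fails for $\overline{m}>0$, i.e.\ $Q<1$, which includes MOTS data: since $u\ge r_o$, one has $1-\frac{2\overline{m}}{u}\ge 1-\frac{2\overline{m}}{r_o}$, so only $u'\le\sqrt{1+L_1^2}$ is available. In that regime your argument is only as sound as the Proposition it invokes.
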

\begin{rem}\label{rem_collarconstruction_simplified}\,
 As $\overline{m}$ is chosen such that \eqref{eq_k^2eqD^2} holds, the Bartnik data directly affects the ODE for $u$. Moreover, $A$ is now fully determined by the Bartnik data and $L_1$, $L_2$.	
\end{rem}

\subsection{Gluing and bending lemma}\label{subsec_gluingbending}

We now state the main tools to attach the end of our collar to a suitable spherically symmetric graph in a Schwarzschild spacetime. Namely, we state a gluing lemma to attach two spherically symmetric initial data sets, and a bending lemma to push up the energy density in a small annular region. Both are straightforward modifications of the respective lemmas developed in \cite{MantoulidisSchoen2015-Apparent-Horizons} (see also \cite{CabreraPacheco2018-AH-Bartnik,CabreraPacheco2017-AF-CMC-Bartnik}) so we will state them now without proof. However, for the convenience of the reader, we include complete proofs in the appendix.

\begin{lem}[Gluing lemma]\label{lem-gluing}
	Let $f_i:[a_i,b_i]\to\mathbf \R^+$, where $i=1,2$, be smooth positive functions and let $\gamma_*$ be the standard round metric on $\mathbb{S}^{2}$. Consider metrics $g_i$ and symmetric $2$-tensors $K_i$ defined on $[a_i,b_i]\times \mathbb{S}^2$ by
	\begin{align*}
	g_i&=ds^2+f_i(s)^2\gamma_*\\
	K_i&=x'(f_i(s))ds^2+x(f_i(s))f_i(s)\gamma_*,
	\end{align*}
	for some $C^1$ function $x$. Assume that
	\begin{enumerate}[label=\roman*.]
		\item $\mu_i>0$ on $[a_i,b_i]$,
		\item $f_1(b_1)<f_2(a_2)$,
		\item $0<f_1'(b_1)<\sqrt{1+x(f(b_1))^2+2x(f(b_1))x'(f(b_1))f(b_1)}$,
		\item $f_2'(a_2)\le f_1'(b_1)$,
		\item $G_x(r)$ is monotone non-decreasing on $[f_1(b_1),f_2(a_2)]$.
	\end{enumerate}
	Then, after an appropriate translation of the intervals $[a_i,b_i]$, there exists a smooth positive function $f:[a_1,b_2]\to\mathbf \R^+$ such that
	\begin{enumerate}[label=\Roman*.]
		\item $f(s)=f_1(s)$ for $a_1\leq s\leq \frac{a_1+b_1}{2}$,
		\item $f(s)=f_2(s)$ for $\frac{a_2+b_2}{2}\leq s\leq b_2$,
		\item $\mu_f>0$ on $[a_1,b_2]$,
	\end{enumerate}  
	where $\mu_f$ denotes the energy density of the initial data set $(M,g,K)$, where $M=[a_1,b_2]\times \Sbb^2$,
	\begin{align}\begin{split}\label{eq-gluingform}
	g&=ds^2+f(s)^2\gamma_*\\
	K&=x'(f(s))ds^2+x(f(s))f(s)\gamma_*.
	\end{split}
	\end{align}
\end{lem}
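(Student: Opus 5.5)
The plan is to reduce the gluing to a one-dimensional problem about a first-order quantity in the radial variable $r=f$, following the Mantoulidis--Schoen strategy.

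\emph{Step 1: a first-order form of $\mu_f>0$.} For data of the form \eqref{eq-gluingform}, Lemma \ref{lem_mu} with $A=1$, $\gamma'=0$ and $R_s=2$ gives
\[
\mu_f=\frac{1}{f^2}\Big(1-f'^2-2ff''+G_x(f)\Big).
\]
As long as $f'>0$, we can regard $f'^2$ as a function of $r=f$. Set $\Phi(r)=r\,f'(s)^2$ with $r=f(s)$. Then $\frac{d}{dr}(f'^2)=2f''$, and so
\[
\mu_f>0\iff \Phi'(r)<1+G_x(r).
\]
For the exact graphs $f'^2=1-\frac{2m}{r}+x^2$ this holds with equality, which is a useful consistency check. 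Hypothesis i.\ says that $\Phi_1$ (from $f_1$) and $\Phi_2$ (from $f_2$) satisfy the strict inequality on their ranges. Conversely, any smooth positive $\Phi$ on an $r$-interval satisfying it yields a smooth $f$. We obtain $f$ by solving $f'=\sqrt{\Phi(f)/f}$, which is smooth since $\Phi>0$. The freedom to translate the $s$-intervals absorbs the arclength this produces.

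\emph{Step 2: leaving piece 1.} Let $r_1=f_1(b_1)$, $r_2=f_2(a_2)$, $c_1=f_1'(b_1)^2$ and $c_2=f_2'(a_2)^2$, so that $\Phi_1(r_1)=r_1c_1$ and $\Phi_2(r_2)=r_2c_2$. By iii.\ and v., $c_1<1+G_x(r_1)\le 1+G_x(r)$ for all $r\in[r_1,r_2]$. I would prescribe the derivative rather than $\Phi$ itself:
\[
\Phi'=\eta\,\Phi_1'+(1-\eta)\,c_1,
\]
where $\eta$ is a smooth cutoff equal to $1$ on $r\le f_1(\frac{a_1+b_1}{2})$ and to $0$ near $r_1$. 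Integrating from the left, $\Phi$ equals $\Phi_1$ up to the midpoint, which gives I. Since both slopes lie strictly below $1+G_x$ on the transition region (by i.\ and by the bound above together with continuity), so does every convex combination. The key point of blending derivatives is that it avoids the dangerous $\eta'(\Phi_1-\cdot)$ term that a direct blend of $\Phi$ would create. Positivity of $\Phi$ near $r_1$ is clear.

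\emph{Step 3: arriving at piece 2, the main obstacle.} We must now land exactly on $\Phi_2$ by $r=f_2(\frac{a_2+b_2}{2})$, while keeping $\Phi'<1+G_x$ and $\Phi>0$. Only the upper bound on $\Phi'$ constrains us, so decreasing $\Phi$ is always allowed; this is where iv.\ ($c_2\le c_1$) enters. I would continue with slope $c_1$ until shortly before $r_2$, then insert a smooth dip with negative slope, and finally blend $\Phi'$ into $\Phi_2'$ on $f_2([a_2,\frac{a_2+b_2}{2}])$, again as a convex combination of derivatives. The depth or width of the dip is a one-parameter family, and by continuity it can be tuned so that $\int\Phi'$ matches $\Phi_2$ exactly at the end of the blending region, which gives II. Since the target value $\Phi_2>0$ is at most what slope $c_1$ would give, the required decrease is bounded, so the dip can be chosen with $\Phi$ staying positive, i.e.\ $f'>0$. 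I expect the care to lie in this bookkeeping: the integral constraint, positivity, and strict inequality must hold simultaneously, especially where the blend into $\Phi_2'$ overlaps the region where $G_x$ is only assumed monotone on $[r_1,r_2]$. Converting back via Step 1 then produces $f$ with I.--III.
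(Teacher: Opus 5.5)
Your route is genuinely different from the paper's, and it is sound in outline.

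\textbf{Comparison.} The paper works in arclength. It translates $[a_2,b_2]$ so that a nonincreasing bridge $\zeta$ from $f_1'(b_1)$ to $f_2'(a_2)$ has integral $f_2(a_2)-f_1(b_1)$. By iii.\ and v., the resulting $C^{1,1}$ profile satisfies $\widehat f''\le 0<\Omega[\widehat f]$ on the bridge. It then mollifies, estimating $f_\varepsilon''$ and $\Omega[f_\varepsilon]$ separately.

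You pass to the area radius instead. Since $\Phi=r(1+x^2)-2\m$, with $\m$ the Hawking mass \eqref{eq_hawkingenergy} of the leaves, your condition $\Phi'<1+G_x$ is exactly $d\m/dr>0$. This buys several things. It linearises the DEC, so the admissible profiles form a convex set. It replaces mollification by an explicit smooth interpolation. It also shows transparently why the hypotheses suffice: ii.--v.\ give $2(\m_2-\m_1)\ge(r_2-r_1)(1+G_x(r_1)-c_1)>0$, where $\m_1,\m_2$ are the Hawking masses of the leaves $\{b_1\}\times\Sbb^2$ and $\{a_2\}\times\Sbb^2$.

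It also costs something. The translation freedom, which the paper uses to do the value matching for free, is absorbed by your reparametrisation, so you must hit $\Phi_2$ by hand. You need $f'>0$ wherever you work in $r$. And your $f'$ is not monotone on the bridge, whereas the proof of Proposition \ref{prop_notrapped} reuses $\zeta'\le 0$, so that argument would have to be redone for your profile.

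\textbf{Two steps need repair.}

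First, place both transition regions in short $r$-intervals ending at $r_1$ and starting at $r_2$. The hypotheses do not make $f_1$ monotone on $[\frac{a_1+b_1}{2},b_1]$, nor $f_2$ on $[a_2,\frac{a_2+b_2}{2}]$. Moreover, $c_1<1+G_x$ is only known on $[r_1,r_2]$ and, by continuity, slightly beyond it. You also need $f_2'(a_2)>0$; the paper's argument tacitly uses this as well, through $\zeta\ge 0$.

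Second, the dip can only lower $\Phi$, and this fails in an allowed case. Write $r_3$ for the end of the second blend. Then $\Phi(r_3)-\Phi_2(r_3)=r_2(c_1-c_2)+e_1+e_2$, with $e_1=\int(1-\eta)(c_1-\Phi_1')\,dr$ and $e_2=\int(1-\eta_2)(c_1-\Phi_2')\,dr$. Since $\Phi_1'(r_1)=c_1+2r_1f_1''(b_1)$ and $\Phi_2'(r_2)=c_2+2r_2f_2''(a_2)$, take the equality case $c_1=c_2$ of iv.\ with $f_1''(b_1)>0$ and $f_2''(a_2)>0$. Then both errors are negative, so you undershoot and no dip can help. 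Your claim that the target lies below what slope $c_1$ gives is therefore false once blending errors are included.

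The fix uses the strict slack you already derived. Run at slope $c_1+\kappa$ on $[r_1,r_2]$, with $\kappa>0$ small enough that $c_1+\kappa<1+G_x$ on a neighbourhood of $[r_1,r_2]$. This builds in an overshoot of at least $\kappa(r_2-r_1)$, which dominates the blending errors once the blending intervals are short. Your intermediate value argument in the dip depth then works. $\Phi$ stays positive, since the bottom of the dip is close to $r_2c_2>0$.
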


\begin{rem}\label{rem_gluing}
	Note that the assumption that the second fundamental forms are given by
	\[
	K=x'(f_i(s))\d s^2+x(f_i(s))f_i(s)\gamma_*
	\]
	essentially reduces the problem to a metric gluing as in the time symmetric case (see \cite{CabreraPacheco2017-AF-CMC-Bartnik,CabreraPacheco2018-AH-Bartnik,MantoulidisSchoen2015-Apparent-Horizons}) as we take $x$ to be given and want to match two different functions $f_i$. It is natural to ask if one can achieve a similar gluing for different choices of $x$, and indeed this seems possible. However, for our purposes it is more straightforward to simply use the same choice of $x$ on both sides of the gluing construction. See Remark \ref{rem_schwarzschildgluing} (ii) for a brief comment.
\end{rem}

\begin{lem}[Bending lemma]\label{lem_bending}
	Let $(M,g,K)$ be an initial data set of the form $M=[a,\infty)\times \Sbb^2$, $a\ge 0$, 
	\begin{align*}
		g&=\d s^2+f(s)^2\gamma_*,\\
		K&=x'(f(s))\d s^2+x(f(s))f(s)\gamma_*,
	\end{align*}
	such that $\mu\ge \tau$ for some $\tau\in\R$, where $f$ is a smooth, positive function, $x$ is $C^1$, and $\gamma_*$ denotes the round metric on $\Sbb^2$.\newline
	For any $s_o>a$ satisfying $f'(s_o)>0$, there exists $\delta>0$ and a smooth positive function $\widetilde{f}$ on $[s_o-\delta,\infty)$ such that $\widetilde{f}(s)=f(s)$ for all $s\in[s_o,\infty)$ and the initial data set $(\widetilde{M},\widetilde{g},\widetilde{K})$, with $\widetilde{M}=[s_o-\delta,\infty)$,
	\begin{align*}
		\widetilde{g}&=\d s^2+\widetilde{f}^2(s)\gamma_*,\\
		\widetilde{K}&=x'(\widetilde{f}(s))\d s^2+x(\widetilde{f}(s))\widetilde{f}(s)\gamma_*,
	\end{align*}
	has $\widetilde{\mu}\ge \tau$ such that $\widetilde{\mu}>\tau$ on $[s_o-\delta,s_o)$. \newline
	In addition, if $f(s_o)>c>0$ and $f''(s_o)>0$, then $\delta$ can be chosen such that $\widetilde{f}(s_o-\delta)>c$, $\widetilde{f}'(s_o-\delta)<f'(s_o)$.
\end{lem}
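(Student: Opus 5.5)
We prove the Bending lemma (Lemma \ref{lem_bending}) by the Mantoulidis--Schoen construction, with the extra $K$-terms in the energy density carried along. Corollary \ref{cor_DEC} with $A=1$ and $\gamma(s)\equiv\gamma_*$ gives, for data of this form,
\begin{equation*}
\mu_f=\frac{1}{f^2}\left(1-f'^2-2ff''+x(f)^2+2x(f)x'(f)f\right)=\frac{1}{f^2}\left(1-f'^2+G_x(f)\right)-\frac{2f''}{f}.
\end{equation*}
The only second-order term is $-2f''/f$. So making $\widetilde f''$ smaller than $f''$ increases $\mu$, provided $\widetilde f$ and $\widetilde f'$ stay $C^0$-close to $f$ and $f'$. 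Since $x$ is only $C^1$, $G_x(\widetilde f)$ is just continuous in $\widetilde f$, and continuity is all we will need.

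\emph{Step 1: define $\widetilde f$.} Since $f$ is smooth we may regard it as defined on a neighbourhood of $s_o$; this is needed only if $s_o-\delta<a$, and otherwise we simply use $f$ on $[s_o-\delta,s_o]$. Choose a smooth cutoff $\eta\ge0$ on $(-\infty,s_o]$ that is positive on $(s_o-\delta,s_o)$ and vanishes to infinite order at $s_o$. Define $\widetilde f$ on $[s_o-\delta,s_o]$ by integrating $\widetilde f''=f''-\eta$ backwards from the data $\widetilde f(s_o)=f(s_o)$, $\widetilde f'(s_o)=f'(s_o)$. Explicitly,
\begin{equation*}
\widetilde f(s)=f(s)-\int_s^{s_o}(t-s)\eta(t)\d t,
\end{equation*}
and set $\widetilde f=f$ for $s\ge s_o$. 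Because $\eta$ is flat at $s_o$, the glued function is smooth.

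\emph{Step 2: check $\widetilde\mu\ge\tau$, strictly on $[s_o-\delta,s_o)$.} We have
\begin{equation*}
\widetilde\mu-\mu=\frac{2\eta}{\widetilde f}+E,\qquad E=\left[\frac{1-\widetilde f'^2+G_x(\widetilde f)}{\widetilde f^2}-\frac{2f''}{\widetilde f}\right]-\left[\frac{1-f'^2+G_x(f)}{f^2}-\frac{2f''}{f}\right].
\end{equation*}
On the interval, $|\widetilde f-f|\le\delta^2\sup\eta$ and $|\widetilde f'-f'|\le\delta\sup\eta$. The main obstacle is ensuring $E$ is dominated by $2\eta/\widetilde f$ near $s_o$, where $\eta\to0$. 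A uniform smallness of $E$ is not enough, so I would try the following. Since $G_x$ is only continuous, I replace the closeness estimate by $|E|\le\omega(\delta\sup\eta)$ for a modulus of continuity $\omega$, and this is not enough by itself.

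Instead, take $\eta$ monotone increasing as $s$ decreases, for instance $\eta(s)=\lambda\,\phi((s_o-s)/\delta)$ with $\phi$ increasing and flat at $0$. Then $|\widetilde f'-f'|(s)\le(s_o-s)\eta(s)$ and $|\widetilde f-f|(s)\le(s_o-s)^2\eta(s)$. The terms involving $f'^2$ and $1/f^2$ are Lipschitz, so they contribute $O((s_o-s)\eta)$, which is less than $\eta/\widetilde f$ for $\delta$ small. The $G_x$ term is also Lipschitz, because $x\in C^1$ and $f$ is smooth, so $G_x=x^2+2xx'r$ is at least continuous. If $G_x$ fails to be Lipschitz, I would compose more carefully: choose $\delta$ so small that $\omega_{G_x}\bigl((s_o-s)^2\eta\bigr)\le\eta\,\min f/4$. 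Such a choice exists when $\eta$ is small relative to the modulus, which is arranged by choosing $\lambda$ small. This gives $\widetilde\mu>\mu\ge\tau$ on $[s_o-\delta,s_o)$. Positivity of $\widetilde f$ holds for $\delta$ small.

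\emph{Step 3: the additional claim.} If $f''(s_o)>0$, then near $s_o$ we have
\begin{equation*}
\widetilde f'(s_o-\delta)=f'(s_o)-\int_{s_o-\delta}^{s_o}\bigl(f''-\eta\bigr)\d t.
\end{equation*}
Since $f''>0$ on a neighbourhood of $s_o$, and we keep $\int\eta$ smaller than $\int f''$ (for example by taking $\lambda<\tfrac12 \min f''$ on the interval), this gives $\widetilde f'(s_o-\delta)<f'(s_o)$. Finally, $\widetilde f(s_o-\delta)>c$ follows from $f(s_o)>c$ by continuity once $\delta$ is small.
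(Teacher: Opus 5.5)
Steps 1 and 3 are fine, but Step 2 has a genuine gap in the control of $G_x(\widetilde f)/\widetilde f^{2}-G_x(f)/f^{2}$.

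\textbf{The gap.} Your claim that this term is Lipschitz does not follow from what you cite. Since $G_x(r)=(r\,x(r)^2)'$, for $x\in C^1$ the function $G_x$ is merely continuous. Locally, every continuous function arises as some $G_x$: take $x=\sqrt{F/r}$ with $F'=G$ and $F>0$.

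The modulus-of-continuity fallback also fails. For $\widetilde f$ to be smooth, $\eta$ must vanish to infinite order at $s_o$, so $\eta(s)=o((s_o-s)^N)$ for every $N$. Take, say, $\omega_{G_x}(t)=\sqrt t$. Your requirement $\omega_{G_x}((s_o-s)^2\eta)\le\tfrac14\eta\min f$ then reads $(s_o-s)\sqrt{\eta}\lesssim\eta$, i.e.\ $\eta(s)\gtrsim(s_o-s)^2$. This is false as $s\to s_o$ for every choice of $\lambda$ and $\delta$. Shrinking $\lambda$ makes it worse, not better, because $\omega(t)/t\to\infty$ as $t\to0$.

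This is not merely a lossy estimate. Nothing prevents $G_x(f(s))-G_x(\widetilde f(s))$ from being positive and of order $\sqrt{f(s)-\widetilde f(s)}\gg\eta(s)$. This happens, for example, if $G_x$ has upward square-root cusps at radii accumulating at $f(s_o)$ from below, and then $E$ is negative and dominant. Your argument is correct when $x\in C^{1,1}_{\mathrm{loc}}$, which covers every concrete choice of $x$ in the paper. However, the lemma is stated, and used in Theorem~\ref{thm_main1}, for $x\in C^1$.

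\textbf{The fix.} Compare $\widetilde\mu(s)$ not with $\mu(s)$ but with $\mu$ at the point where $f$ takes the value $\widetilde f(s)$. Since $f'>0$ near $s_o$, $\sigma(s):=f^{-1}(\widetilde f(s))$ is well defined, with $0\le s-\sigma(s)\lesssim(s_o-s)^2\eta(s)$. Because $\widetilde f(s)=f(\sigma(s))$, the $G_x$ terms cancel exactly:
\[
\widetilde\mu(s)-\mu(\sigma(s))=\frac{f'(\sigma(s))^2-\widetilde f'(s)^2}{\widetilde f(s)^2}+\frac{2\bigl(\eta(s)+f''(\sigma(s))-f''(s)\bigr)}{\widetilde f(s)}.
\]
All terms other than $2\eta/\widetilde f$ are $O((s_o-s)\eta)$ by smoothness of $f$ alone. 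Also $\mu(\sigma(s))\ge\tau$ as long as $\delta$ is small compared with $s_o-a$. With this change your proof goes through.

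This is precisely the paper's device. The paper sets $\widetilde f=f\circ\sigma$ from the outset, with $\sigma(s)=s-\int_s^{s_o}\theta$ and $\theta=e^{-(s-s_o)^{-2}}$. Then $\widetilde\mu(s)-\mu(\sigma(s))$ depends only on $\dot\sigma$, $\ddot\sigma$ and derivatives of $f$ at $\sigma(s)$. Positivity follows from $\ddot\sigma/(1-\dot\sigma^2)\to\infty$ as $s\to s_o$, and no regularity of $x$ beyond continuity of $G_x$ is ever needed.
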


\begin{rem}\label{rem_bending}
	We emphasise that the Bending Lemma requires no restrictions on $x$. In fact, the constant $\delta$ does not depend on the choice of $x$.
\end{rem}

\subsection{Gluing to a spherically symmetric graph in a Schwarzschild spacetime}\label{subsec_schwarzschildgluing}

Using the above tools, we are now ready construct the desired extension of the given Bartnik data. Before we prove the main theorem of this section, we first provide an intermediate result for gluing a spherically symmetric initial data set to a spherically symmetric graph in a suitable Schwarzschild spacetime. This will rephrase the criteria from the gluing lemma in terms of the Hawking mass at the end of the collar.

Recall that the Hawking mass for a surface $\Sigma$ is defined as
\[
	\m_{{H}}(\Sigma)=\sqrt{\frac{\btr{\Sigma}}{16\pi}}\left(1-\frac{1}{16\pi}\int_\Sigma\mathcal{H}^2_\Sigma\d\mu_\Sigma\right).
\]
In particular, if $\Sigma_s=\{s\}\times \Sbb^2$ is an initial data set $(M_f,g_f,K_f)$ of the form $M=[a,b]\times \Sbb^2$,
\begin{align*}
	g_f&=\d s^2+f(s)^2\gamma_*,\\
	K_f&=x'(f(s))\d s^2+x(f(s))f(s)\gamma_*,
\end{align*}
where $\gamma_*$ denotes the round metric, the Hawking mass $\m(s)=\m_{{H}}(\Sigma_s)$ of each leaf is given by
\begin{align}\label{eq_hawkingenergy}
	\m(s)=\frac{f(s)}{2}\left(1+x^2(f(s))-f'(s)^2\right).
\end{align}

We also briefly recall some basic properties of a spherically symmetric graph in a Schwarzschild spacetime. Recall from Section \ref{subsec_schwarzschild} that after a suitable change of coordinates, any spherically symmetric graph $(M_m,g_m,K_m)$ in the Schwarzschild spacetime (of mass $m>0$) can be written as $M_m=[0,\infty)\times \Sbb^2$, 
\begin{align*}
	g_m&=\d s^2+u_m^2(s)\gamma_*,\\
	K_m&=x'(u_m(s))\d s^2+x(u_m(s))u_m(s)\gamma_*,
\end{align*}
where $\gamma_*$ denotes the round metric on $\Sbb^2$, and $u_m\colon [0,\infty)\to [a,\infty)$ such that $u_m(0)=a$ and 
\[
	u_m'(s)=\sqrt{1-\frac{2m}{u_m(s)}+x^2(u_m(s))}
\]
with $1-\frac{2m}{a}+x^2(a)\ge 0$, and where $a$ is the radius of the inner boundary of the graph (in Schwarzschild coordinates). Note that independent of the choice of $x$, the largest radius $r_+\le 2m$ such that
\[
	1-\frac{2m}{r_+}+x^2(r_+)=0
\]
corresponds to an outermost minimal surface in $(M_m,g_m)$. However, as the outermost MOTS corresponds to the Schwarzschild radius $r=2m$, the respective initial data set $(M_m,g_m,K_m)$ will contain trapped surfaces with $\mathcal{H}^2<0$ if $r_+<2m$. Moreover, there is a plethora of $x$ such that
\[
	1-\frac{2m}{r}+x^2(r)>0
\]
on $(0,\infty)$. For these reasons, we will always choose $a=2m$ such that the inner boundary of $(M_m,g_m,K_m)$ will correspond to an outermost MOTS.

Finally, note that we have implicitly assumed that $u_m$ is well-defined on $[0,\infty)$, which is equivalent to 
\[
	\int\limits_{a}^r\frac{1}{\sqrt{1-\frac{2m}{\rho}+x^2(\rho)}}\d\rho <\infty
\]
for all $r\ge a$. While this is true for any $a$ as above, it is straightforward to check that it holds for $a=2m$ by comparison to the time symmetric case as $x^2\ge 0$.

It will be convenient to define the function
\begin{align}
	V_{x,m}(r)&=1-\frac{2m}{r}+x^2(r)\label{eq_G2}
\end{align}
for $m\in\mathbb R$. We are now ready to state our preliminary gluing result.

\begin{prop}[Gluing to Schwarzschild]\label{prop_gluingschwarzschild}\,\newline
	Let $f$ be a smooth positive function on $[a,b]$ with $f'(b)>0$. Let $(M_f,g_f,K_f)$ be an initial data set as above for a choice of $x$ such that 
	\begin{enumerate}
		\item[(i)] $\mu_f>0$,
		\item[(ii)] $f(b)>2\m(b)=2\m_{{H}}(\Sigma_b)$,
		\item[(iii)] $G_x$ is monotone non-decreasing on $[f(b),\infty)$,
		\item[(iv)] $V_{x,\m(b)}$ is strictly increasing on $[2\m(b),\infty)$.
	\end{enumerate}
	Then, for any $m\in (\m(b),\frac{1}{2}f(b))$ 
	there exists a spherically symmetric initial data set $(M,g,K)$ with $\mu\ge 0$ such that
	\begin{enumerate}
		\item[(I)] $(M,g,K)$ embeds isometrically into the Schwarzschild spacetime of mass $m$ outside of a neighbourhood of $\partial M$,
		\item[(II)] there exists a neighbourhood of $\partial M$ that is isometric to \newline$\left(\left[a,\frac{a+b}{2}\right]\times \Sbb^2,g_f,K_f\right)$.
	\end{enumerate}
\end{prop}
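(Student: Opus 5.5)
The plan is to follow the Mantoulidis--Schoen strategy. First use the Bending Lemma (Lemma \ref{lem_bending}) near the end of $M_f$ to create room, then apply the Gluing Lemma (Lemma \ref{lem-gluing}) with $f_1$ equal to the (bent) collar profile and $f_2=u_m$ the Schwarzschild graph profile of mass $m$.

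Fix $m\in(\m(b),\frac12 f(b))$ and let $u_m$ solve $u_m'=\sqrt{V_{x,m}(u_m)}$ with $u_m(0)=2m$. By \eqref{eq_hawkingenergy}, $f'(b)^2=V_{x,\m(b)}(f(b))$. Since $m>\m(b)$ and $f(b)>2m$, we get $V_{x,m}(f(b))<V_{x,\m(b)}(f(b))=f'(b)^2$. By (iv), $V_{x,m}=V_{x,\m(b)}-\frac{2(m-\m(b))}{r}$, which is strictly increasing as a sum of strictly increasing functions on $[2m,\infty)\subset[2\m(b),\infty)$. Since $u_m'>0$ for $s>0$, $u_m$ increases, so there is a parameter value $a_2$ with $u_m(a_2)$ slightly larger than $f(b)$. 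There $u_m'(a_2)^2=V_{x,m}(u_m(a_2))$, which is still $<f'(b)^2$ by continuity once we choose $u_m(a_2)-f(b)$ small. This gives conditions ii and iv of Lemma \ref{lem-gluing}, provided $f_1$ is essentially $f$ near $b$.

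Condition iii, $f_1'(b_1)^2<1+G_x(f_1(b_1))$, is where the derivative of $\mu$ enters. I would compute $\frac{d}{ds}\m(s)$ for the spherically symmetric data: differentiating \eqref{eq_hawkingenergy} gives
\[
\m'=\tfrac{f'}{2}\bigl(1+G_x(f)-f'^2\bigr)-ff'f'' ,
\]
and comparing with Lemma \ref{lem_mu} ($\gamma$ round, $A=1$) yields $\m'=\frac{f^2f'}{4}\mu$. Condition iii therefore amounts to $1+G_x-f'^2>0$ at $b_1$, equivalently $\m'(b_1)>-ff'f''$. To arrange this I would use the Bending Lemma differently: since $\mu_f>0$ strictly on the compact $[a,b]$, we have $\mu_f\ge\tau>0$. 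Restricting to a slightly smaller interval and bending the profile near its end keeps $\mu>0$ and makes $f''<0$ if needed. More simply, perturb $f$ on $[\frac{a+b}{2},b]$ by a small concave modification, keeping $\mu>0$ by openness; the Hawking mass and $f(b)$ change only slightly, so the strict inequalities in (ii) and $m>\m(b)$ survive. Once $f''(b_1)\le 0$ and $\mu>0$, $\m'>0$ forces $1+G_x-f'^2>0$. Condition v is hypothesis (iii), because $f_1(b_1)\ge f(b)$ up to the small perturbation, and I would take the perturbation to keep $f_1(b_1)\ge f(b)$. Condition i holds on the collar by (i). On the Schwarzschild piece, $\mu$ computed via Section \ref{subsec_schwarzschild} vanishes for vacuum. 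Lemma \ref{lem-gluing} wants $\mu_2>0$, so first apply the Bending Lemma to $u_m$ at a point $s_o$ with $u_m'(s_o)>0$, taking $\tau=0$, to get strict positivity on a short interval $[s_o-\delta,s_o)$. Then use this bent piece as $f_2$ on $[s_o-\delta,s_o]$ and continue with the vacuum graph beyond $s_o$. The additional clause of the Bending Lemma ($u_m''>0$ holds since $V_{x,m}$ is increasing) gives $f_2(a_2)>c$ and $f_2'(a_2)<u_m'(s_o)$, which is what ii and iv require.

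Gluing then yields $f$ on the combined interval with $\mu\ge0$ everywhere: positive on the glued part and zero on the Schwarzschild end. It agrees with $f_1=f$ near $\partial M$, giving (II), and with $u_m$ beyond $s_o$, which embeds in Schwarzschild of mass $m$, giving (I). I expect the main obstacle to be verifying condition iii together with the ordering constraints simultaneously, i.e. controlling the choice of $s_o,\delta$ relative to the collar end so that $f_1(b_1)<f_2(a_2)$ and $f_2'(a_2)\le f_1'(b_1)$ both hold. I would handle this by choosing $s_o$ with $u_m(s_o)$ very close to $f(b)$ from above, and using the strict gap $f'(b)^2-V_{x,m}(f(b))>0$ as the quantitative margin.
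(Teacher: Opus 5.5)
\textbf{There is a gap at condition iii of the Gluing Lemma.} Your handling of conditions ii and iv of Lemma \ref{lem-gluing} follows the paper's proof. You use $V_{x,m}(f(b))<V_{x,\m(b)}(f(b))=f'(b)^2$ and continuity, then bend $u_m$ at $s_o$ with $c=f(b)$ and $u_m''>0$.

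The gap is condition iii, $f_1'(b_1)^2<1+G_x(f_1(b_1))$, and your mechanism for it has the sign reversed. In this setting $\mu>0$ is equivalent to $1+G_x-f'^2>2ff''$ (your identity should also read $\m'=\frac{f^2f'}{2}\mu$). So positivity of $1+G_x-f'^2$ follows from $\mu>0$ when $f''\ge 0$, not when $f''\le 0$. With $f''<0$, the inequality $\m'>0$ only gives $\frac{f'}{2}(1+G_x-f'^2)>ff'f''$, whose right-hand side is negative, so nothing is forced.

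More fundamentally, condition iii involves only $f(b_1)$ and $f'(b_1)$, so no small modification can create it. If $1+G_x(f(b))-f'(b)^2<0$, it stays negative after any $C^1$-small change near $b$, concave or convex. Keeping $\mu>0$ would then force $f''<0$ there. This condition cannot be engineered; it has to come from the hypotheses.

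\textbf{The missing step uses an identity you already wrote down.} Since \eqref{eq_hawkingenergy} gives $f'(b)^2=V_{x,\m(b)}(f(b))$, you get
\[
1+G_x(f(b))-f'(b)^2=\frac{2\m(b)}{f(b)}+2f(b)\,x(f(b))\,x'(f(b))=f(b)\,V'_{x,\m(b)}(f(b)).
\]
So condition iii says exactly that $V_{x,\m(b)}$ is increasing at $r=f(b)$. Hypothesis (ii) places $f(b)$ in $(2\m(b),\infty)$, and hypothesis (iv) then supplies the increase. Like the paper, one reads (iv) as giving $V'_{x,\m(b)}(f(b))>0$; compare Remark \ref{rem_schwarzschildgluing}(iii).

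This is the real role of hypothesis (iv); your proposal uses it only for the monotonicity of $V_{x,m}$ and for $u_m''>0$. With this identity, no modification of the collar is needed: take $f_1=f$ on $[a,b]$. That choice also gives (II) on exactly $[a,\frac{a+b}{2}]$, rather than on a shorter interval.
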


\begin{rem}\label{rem_schwarzschildgluing}\,
	\begin{enumerate}
		\item[(i)] From the definition of the Hawking mass, one quickly sees that $f(b)>2\m(b)$ is equivalent to $H^2-P^2>0$.
		\item[(ii)] Note that we can not guarantee that the resulting initial data set will have any physically meaningful asymptotic behavior for a general choice of function $x$. However, we may always achieve a particular asymptotic behavior by interpolation in an annular region in the Schwarzschild exterior, which will trivially preserve the DEC. One particular way to achieve this and preserve the spherical symmetry is by smoothly interpolating between two different choices of $x$, as $x$ fully determines the behavior of the graph in the Schwarzschild exterior (cf. \cite{Wolff2024-NEC-Umbilic}). For example, one may always achieve asymptotic flatness by simply multiplying $x$ with a suitable cut-off function to obtain an initial data set that is isometric to the time-symmetric Schwarzschild initial data outside a compact set. Asymptotically hyperboloidal extensions can be obtained similarly (cf. Remark \ref{rem_cmc}).

		\item[(iii)] We note that \emph{(iv)} above implies that $V_{x,m}$ is strictly increasing for all $m\ge \m(b)$ on $[2\m(b),\infty)$, that is, we have $u_m''>0$ in view of Lemma \ref{lem_bending}. Although a more careful argument shows that we do not require $u_m''>0$ and that condition \emph{(iv)} could be relaxed to only require that $V_{x,\m(b)}$ be increasing at $r=f(b)$, the presently chosen condition suffices for our purposes and is easier to work with.
		
		We also remark that the condition $V'_{x,\m(b)}(f(b))>0$ is equivalent to
		\begin{equation*}
			\m(b)>-f(b)^2x(f(b))x'(f(b)),
		\end{equation*}
		which agrees with the lower bound on the Hawking mass at the end of the collar assumed in both the asymptotically flat \cite{CabreraPacheco2017-AF-CMC-Bartnik} and asymptotically hyperbolic \cite{CabreraPacheco2018-AH-Bartnik} extensions, which correspond to $x(r)=0$ and $x(r)=r$ respectively.
	\end{enumerate}
\end{rem}

\begin{proof}
	We prove this by an application of the Gluing Lemma (Lemma \ref{lem-gluing}). To this end, for a given $m\in (\m(b),\frac{1}{2}f(b))$, we first aim to verify the following for a suitable $s_o\in(0,\infty)$:
	\begin{enumerate}
		\item[(a)] $f(b)<u_m(s_o)$,
		\item[(b)] $0<f'(b)<\sqrt{1+x^2(f(b))+2x(f(b))x'(f(b))f(b)}$,
		\item[(c)] $u_m'(s_o)\le f'(b)$,
		\item[(d)] $G_x$ is monotone non-decreasing on $[f(b),\infty)$.
	\end{enumerate}
	Note that (d) immediately follows from assumption (iii). It thus remains to verify (a)--(c). We first verify (b) by observing that rearranging Equation \eqref{eq_hawkingenergy} gives
	\begin{align}\label{eq_Hawking2}
		f'(b)^2=1-\frac{2\m(b)}{f(b)}+x^2(f(b)).
	\end{align}
	In particular, (b) is equivalent to
	\[
		0<\frac{2\m(b)}{f(b)^2}+2x(f(b))x'(f(b))=V'_{x,\m(b)}(f(b)),
	\]
	which is satisfied by assumptions \emph{(ii)} and \emph{(iv)}.\newline
	We now write
	\begin{equation*}
	u'_m(s)^2=1-\frac{2m}{u_m(s)}+x(u(s))^2
	\end{equation*}
	and imposing the condition $m>\m(b)$ gives
	\begin{equation}\label{eq-uprimebound}
	u'(s)^2<\left(1-\frac{f(b)}{u_m(s)}\right)+x(u(s))^2-\frac{f(b)}{u_m(s)}x(f(b))^2+\frac{f(b)}{u_m(s)}f'(b)^2.
	\end{equation}
	On the other hand, if $f(b)>u_m(0)=\max\left\{2m,0\right\}$ there exists $\widehat s\in(u_m(0),\infty)$ such that $u_m(\widehat s)=f(b)$. Note that \eqref{eq-uprimebound} then gives
	\begin{equation*}
	u_m'(\widehat s)^2<f'(b)^2
	\end{equation*}
	and since $u_m$ is increasing, choosing $s_o=\widehat s+\varepsilon$ for sufficiently small positive $\varepsilon$ ensures (a) and (c) hold.
	
	As $\mu_m=0$ on $(M_m,g_m,K_m)$ we can not directly apply Lemma \ref{lem-gluing}. Instead, we first apply Lemma \ref{lem_bending} to obtain an initial data set $(\widetilde{M},\widetilde{g},\widetilde{K})$ with $\widetilde{\mu}>0$, where $\widetilde{M}=[s_o-\delta,s_o)\times \Sbb^2$,
	\begin{align*}
		\widetilde{g}&=\d s^2+\widetilde{u}(s)^2\gamma_*,\\
		\widetilde{K}&=x'(\widetilde{u}(s))\d s^2+x(\widetilde{u}(s))\widetilde{u}(s)\gamma_*,
	\end{align*}
	and which extends (isometrically) to $([s_o,\infty)\times\Sbb^2,g_m,K_m)$. As $u_m(s_o)>f(b)>0$ and $u_m(s_o)''>0$ (by assumption \emph{(iv)}) we can choose $\delta>0$ such that (a)--(c) are satisfied with $\widetilde{u}$ at $s_o-\delta$. The claim then follows by applying Lemma \ref{lem-gluing} to $(M_f,g_f,K_f)$ and $(\widetilde{M},\widetilde{g},\widetilde{K})$.
\end{proof}

We now state our first main theorem.

\begin{thm}\label{thm_main1}\,\newline
	Let $(\Sigma,\gamma_o,H_o,P_o,0)$ be Bartnik data with $\gamma_o$ having strictly positive scalar curvature, where $H_o$, $P_o$ are constant with $H_o\ge \btr{P_o}>0$, and Hawking mass $\m_o\ge 0$.
	Let $x$ be a $C^1$ function on $[2\m_o,\infty)$ such that $x(r_o)\not=0$ and $\btr{x}(r)\le L_1+rL_2$ for some non-negative constants $L_1$, $L_2$, and that
	\begin{align*}
		G_x(r)=x^2+2x(r)x'(r)r,\,\text{ and }\;\; V_{x,\m_o}(r)=1-\frac{2\m_o}{r}+x^2(r)
	\end{align*}
	are monotone non-decreasing on $[2\m_o,\infty)$. Furthermore let
	\begin{align*}
		C_1&=
		\begin{cases}
			\frac{\beta}{\alpha}&L_2=0,\\
			\frac{\beta L_2^2}{\alpha(e^{L_2}-1)^2}&L_2>0,
		\end{cases}\\
		C_2&=
		\begin{cases}
			\frac{4\beta-\alpha r_o^2\mathcal{H}_o^2}{r_o^2\left(1+\alpha L_1^2\right)}&L_2=0,\\
			\frac{4L_2^2\beta-\alpha(e^{L_2}-1)^2r_o^2\mathcal{H}_o^2}{r_o^2L_2^2\left(1+\alpha\left(e^{2L_2}r_o^2+ \frac{(e^{L_2}-1)^2}{L_2^2}L_1^2\right)\right)}&L_2>0.
		\end{cases}
	\end{align*}
	If $\mathcal H_o$ and $P_o$ satisfy
	\begin{equation}\label{eq-HPcond}
		\mathcal{H}_o^2< \frac{4}{r_o^2}C_1,\text{ and } P_o^2< C_2x(r_o)^2,
	\end{equation}
	then there exists an initial data set $(M,g,K)$ that satisfies the DEC such that the Bartnik data $(\Sigma,\gamma_o, H_o,P_o,0)$ is realised as the inner boundary $\partial M$ and $(M,g,K)$ embeds isometrically into a Schwarzschild spacetime of mass $m>\m_o$ outside of a neighbourhood of $\partial M$.
\end{thm}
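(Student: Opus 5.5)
The plan is to combine the Simplified Collar Construction (Corollary \ref{prop_collarextension_simplified}) with the gluing result of Proposition \ref{prop_gluingschwarzschild}. First, choose $\overline m$ by \eqref{eq_massparameter}. Then condition \eqref{eq_k^2eqD^2} holds, so condition (a) of Proposition \ref{prop_collarextension} is satisfied regardless of the sign of $G_x$. The hypotheses \eqref{eq-HPcond} are exactly those of Corollary \ref{prop_collarextension_simplified}, so we obtain a collar $([0,1]\times\Sbb^2,g,K)$ that strictly satisfies the DEC and realises $(\Sigma,\gamma_o,H_o,P_o,0)$ at $s=0$. By property (iii) of the path $\gamma(s)$, the collar is spherically symmetric on $[1-\varepsilon,1]$. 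There we reparametrise by arclength $t=As$ and absorb the constant $D$ and the scale factor $k$.

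The main obstacle is that, on this end region, the data is $g=\d t^2+f(t)^2\gamma_*$ with $f(t)=u(kt)$, while $K=D[x'(f)\d t^2+x(f)f\gamma_*]$. This is of the required form for Proposition \ref{prop_gluingschwarzschild} only if $D=1$. To handle it, I would work with the rescaled function $\widetilde x=Dx$ in the gluing step. The identity \eqref{eq_k^2eqD^2} implies $k^2=D^2$: from \eqref{eq-kHP}, $D^2x(r_o)^2=r_o^2P_o^2/4$ and $k^2V_{x,\overline m}(r_o)=r_o^2H_o^2/4$, and \eqref{eq_k^2eqD^2} makes these equal. It then remains to check that $G_{\widetilde x}=D^2G_x$ and $V$ keep the monotonicity hypotheses. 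If rescaling spoils the exact form, an alternative is to note that the Hawking mass along the collar is monotone, since its derivative involves the same bracket that vanishes by \eqref{eq_k^2eqD^2}. In that case one would redo the gluing and bending arguments with the factor $D$, which only multiplies $G_x$ and the $K$-terms in $\mu$; the sign conditions (iii), (v) of the lemmas are unaffected by a positive factor $D^2$. The energy density in this region is the one given by Lemma \ref{lem_mu}, and it stays strictly positive.

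Next, verify the hypotheses of Proposition \ref{prop_gluingschwarzschild} at the end $b$ of the collar. The Hawking mass $\m(b)=\frac{f}{2}(1+D^2x^2-f'^2)$ has to be compared with $\m_o$. Using \eqref{eq_hawkingenergy} and the ODE $u'^2=V_{x,\overline m}(u)$, we get $f'^2=k^2V_{x,\overline m}(u)$. With $k^2=D^2$ this gives $\m(b)=\frac{u}{2}(1-k^2)+k^2\overline m$, which is increasing in $u$ when $k^2<1$ and equals $\m_o$ at $u=r_o$. Hence $\m(b)\ge\m_o\ge0$, so conditions (iii) and (iv) of Proposition \ref{prop_gluingschwarzschild} follow from the monotonicity hypotheses, since $[2\m(b),\infty)\subset[2\m_o,\infty)$ and $V_{x,m}$ increases with $m$. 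Condition (ii), $f(b)>2\m(b)$, is equivalent to $\mathcal H^2>0$ at the end (Remark \ref{rem_schwarzschildgluing}(i)). By Lemma \ref{lem_stmc}, $\mathcal H^2=\frac{4k^2}{u^2}(1-\frac{2\overline m}{u})>0$ since $u>r_o\ge2\overline m$ (if $\mathcal H_o=0$, $u$ strictly increases anyway). Finally, $f'(b)>0$ holds since $u'>0$.

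We then apply Proposition \ref{prop_gluingschwarzschild} with any $m\in(\m(b),f(b)/2)$, which gives $m>\m_o$. Concatenating along the isometric neighbourhood (II) with the part of the collar over $[0,1-\varepsilon/2]$ yields the desired $(M,g,K)$. It satisfies the DEC everywhere, since $J\equiv0$ by Lemma \ref{lem_J} and $\mu\ge0$, and it is Schwarzschild outside a neighbourhood of $\partial M$.
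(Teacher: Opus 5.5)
Your proposal is correct and follows the paper's proof essentially step for step. You obtain the collar from Corollary \ref{prop_collarextension_simplified} with $\overline m$ chosen by \eqref{eq_massparameter} so that $k^2=D^2$. You absorb $D$ into $\widetilde x=Dx$, which already gives exactly the form required, so your fallback plan is unnecessary. You show the Hawking mass increases along the collar (you compute $\m=\frac u2(1-k^2)+k^2\overline m$ directly, where the paper differentiates it), and you finish with Proposition \ref{prop_gluingschwarzschild}.

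Two small points should be stated explicitly rather than assumed. First, $k^2=D^2<\frac{r_o^2}{4}C_2\le\beta\le1$ follows from $P_o^2<C_2x(r_o)^2$. Second, the strict increase needed in condition (iv) concerns $V_{Dx,\m(b)}$, not $V_{x,\m(b)}$; it holds because $D^2\le1$ and $\m(b)>\m_o$ strictly.
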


\begin{rem}\label{rem_main1}\,
	\begin{enumerate}
		\item[(i)] We note that $\m_o\geq0$ is equivalent to $\mathcal{H}^2_o\le 4r_o^{-2}$, which is the (square of) the spacetime mean curvature for round spheres in the Minkowski spacetime. In particular, the condition on $\mathcal{H}^2_o$ is trivially satisfied if $C_1>1$, for example, if $\gamma_o$ is sufficiently close to a round metric.
		\item[(ii)] Observe that under the assumptions of the theorem, $G_{Dx}$ is monotonically non-decreasing, and $V_{Dx,m}$ is strictly increasing for all $m\ge  \m_o$, $D\in[-1,1]$.
		\item[(iii)] Observe that $C_2$ itself depends on $P_o$ through $\mathcal{H}_o$. While \eqref{eq-HPcond} could be equivalently stated as a relation between $P_o$ and $H_o$, it seems natural to phrase it with respect to the spacetime mean curvature $\mathcal{H}_o$ in the non time-symmetric case. For example, we have $\mathcal{H}_o=0$ for MOTS data. Nevertheless, we note that \eqref{eq-HPcond} greatly simplifies for specific choices of $x$, in which case it is more convenient to state it in terms of $H_o$ and $P_o$. See, for example, Corollary \ref{kor_any} below.
	\end{enumerate}
\end{rem}

\begin{proof}
	By Corollary \ref{prop_collarextension_simplified} there exists a collar extension 
	with $\mu>0$ of the form
	$M=[0,1]\times \Sbb^2$,
	\begin{align*}
	g&=A^2\d t^2+u(Akt)^2r_o^{-2}\gamma(t),\\
	K&=D\left(A^2x'(u(Akt))\d t^2+x(u(Akt))u(Akt)r_o^{-2}\gamma(t)\right),
	\end{align*}
	where we note that $k^2=D^2\not=0$, as we have chosen the mass parameter in the collar such that \eqref{eq_k^2eqD^2} holds, and we take $D>0$ \footnote{That is, we replace $x$ by $-x$ if $P_ox(r_o)<0$, which does not affect the proof as all relevant computations only involve $x^2$ or its derivatives}. Moreover, it is easy to check that the assumptions directly imply that $D^2<C_2\le 1$ by Lemma \ref{lem_stmc}.
	By a change of coordinates $s=At$, we find $M=[0,A]\times \Sbb^2$,
	\begin{align*}
	g&=\d s^2+f(s)^2\widetilde{\gamma}(s),\\
	K&=\widetilde{x}'(f(s))\d s^2+\widetilde{x}(f(s))f(s)\widetilde{\gamma}(s),
	\end{align*}
	where $f(s)=u(ks)$, $\widetilde{x}(r)=Dx(r)$, and $\widetilde{\gamma}(s)=r_o^{-2}\gamma(\frac{s}{A})$. By construction, we have that $\widetilde{\gamma}(s)=\gamma_*$ (after possibly a change of coordinates via a fixed conformal diffeomorphism) on $((1-\widetilde{\varepsilon})A,A]$ for some fixed $\widetilde{\varepsilon}$. Hence, restricting our attention to this spherically symmetric part of the collar, the claim then follows once we verify the assumptions of Proposition \ref{prop_gluingschwarzschild} at the end of the collar.
	
	To this end, we first note that
	using \eqref{eq_hawkingenergy} and Lemma \ref{lem_stmc}, we find that 
	\begin{align}
	\begin{split}\label{eq_collarextension_hawkingmonotonicity}
	\frac{\d}{\d s}m_{\mathcal{H}}(\Sigma_s)&=Ak\frac{u'(Aks)}{2}\left(1-k^2-(k^2-D^2)G_x(u(ks))\right)\\
	&=Ak\frac{u'(Aks)}{2}(1-D^2)>0,
	\end{split}
	\end{align}
	as $k^2=D^2<1$ by construction, where $u'=\partial_t u$. As $f'>0$ by construction, we hence find that $f(A)>r_o\ge \m_o$ and $\m(A)>\m_o\geq0$. Thus it follows by the assumptions that $G_{Dx,\m(A)}$ is non-decreasing on $[f(A),\infty)$, and $V_{Dx,\m(A)}$ is strictly increasing on $[\m(A),\infty)$. Hence, to apply Proposition \ref{prop_gluingschwarzschild} it remains to show that $f(A)>2\m(A)$.
	
	However, this is always satisfied as \eqref{eq_collarextension_hawkingmonotonicity} directly implies that 
	\begin{align*}
		(f-2m)'(s)&=Au'(ADs)D^3>0,
	\end{align*}
	since $D=k>0$, and hence 
	\[
	f(A)-2\m(A)>f(0)-2\m_o=r_o-2\m_o\ge 0
	\]
	as $\mathcal{H}^2_o\ge 0$ by assumption. This concludes the proof, as the constructed extension has $\mu\ge 0$ everywhere which is equivalent to the DEC in this case by Corollary \ref{cor_DEC}.
\end{proof}

The next proposition further shows that the extension can always be chosen such that no trapped surfaces arise in its interior, which is a requirement for admissible Bartnik extensions.
\begin{prop}\label{prop_notrapped}
	Let $(\Sigma,\gamma_o,H_o,P_o,0)$  and $x$ be as in Theorem \ref{thm_main1}. Then the constructed initial data set $(M,g,K)$ can always be chosen such that it does not contain weakly trapped surfaces with $\mathcal{H}^2\le 0$. 
\end{prop}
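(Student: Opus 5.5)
The plan is to show that the constructed extension $(M,g,K)$ is foliated by the level sets $\Sigma_s=\{s\}\times\Sbb^2$, each satisfying $H(s)>|P(s)|$ with respect to the normal $\partial_s$ pointing away from $\partial M$. A maximum principle argument then excludes weakly trapped surfaces. Throughout, I use the freedom in the statement ("can always be chosen") to fix the gluing parameters and the radius $s_o$ in Proposition \ref{prop_gluingschwarzschild} so that the radial profile is strictly increasing everywhere.

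\textbf{Step 1: the collar.} By Lemma \ref{lem_stmc} with $k^2=D^2$ (our choice \eqref{eq_massparameter} of $\overline m$), we have $H(s)=\frac{2k}{u}u'>0$ and
\[
\mathcal H^2(s)=\frac{4k^2}{u^2}\Big(1-\frac{2\overline m}{u}\Big).
\]
This is positive since $u$ is increasing, so $u>r_o\ge 2\overline m$ for $s>0$. At $s=0$ we have $\mathcal H^2_o\ge0$; the boundary itself is excluded because it is not a surface enclosing $\partial M$ in the interior.

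\textbf{Step 2: the gluing region and the Schwarzschild part.} Here the data has the form $g=\d s^2+f^2\gamma_*$ and $K=\widetilde x'(f)\d s^2+\widetilde x(f)f\gamma_*$ with $\widetilde x=Dx$. From \eqref{eq_hawkingenergy} and Lemma \ref{lem_mu}, a direct computation gives
\[
\m'(s)=\tfrac12 f' f^2\mu .
\]
So wherever $f'>0$, the Hawking mass is non-decreasing, because $\mu\ge0$. I will check in the gluing and bending lemmas (appendix constructions) that $f'>0$ throughout; this holds because both constructions interpolate between positive slopes. It follows that $\m(s)\le m$, the final Schwarzschild mass. On the other hand, $f(s)\ge f(b)>2m$ on the glued part, and $u_m(s)>u_m(s_o-\delta)>f(b)>2m$ on the Schwarzschild part. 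Hence
\[
\frac{f^2}{4}\big(H^2-P^2\big)=f'^2-D^2x^2=1-\frac{2\m(s)}{f}+(1-D^2)x^2(f)>0,
\]
and $H=2f'/f>0$. If we further cut off $x$ at large radii for asymptotic flatness (Remark \ref{rem_schwarzschildgluing}(ii)), the same identity with $\chi x$ in place of $x$ gives $f'^2-D^2\chi^2x^2=1-2m/f>0$. So every leaf satisfies $H(s)>|P(s)|$.

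\textbf{Step 3: maximum principle.} Suppose $\Sigma'$ is a closed surface in $M$ with $\mathcal H^2\le0$ everywhere, and assume first that it encloses $\partial M$. Let $s_*=\max_{\Sigma'}s$, attained at a point $p$. At $p$, $\Sigma'$ is tangent to $\Sigma_{s_*}$ and lies on the side $\{s\le s_*\}$, so the outward normals agree. Comparison of second fundamental forms gives $H_{\Sigma'}(p)\ge H(s_*)>0$. Since $P=\trace_{T\Sigma}K$ depends only on the tangent plane, $P_{\Sigma'}(p)=P(s_*)$. Therefore
\[
H_{\Sigma'}(p)^2-P_{\Sigma'}(p)^2\ge H(s_*)^2-P(s_*)^2>0,
\]
which is a contradiction. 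If $\Sigma'$ does not enclose $\partial M$, the same touching argument applies with either choice of normal, since only $H^2$ enters. This step also needs the maximum to be interior or at a non-boundary leaf; that holds because the leaves near $\partial M$ other than $\partial M$ are untrapped and $M$ has the single end.

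\textbf{Main obstacle.} I expect Step 2 to be the hardest, specifically ensuring $f'>0$ and $f>2\m(s)$ inside the interpolation of the gluing lemma. I would first try to obtain this from the monotonicity $\m'=\tfrac12 f'f^2\mu$ together with $f(b)>2m$, which requires inspecting the appendix construction to confirm monotonicity of $f$.
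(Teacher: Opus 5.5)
Your Steps 1 and 3 agree with the paper. Step 3 spells out the ``standard argument'' the paper only cites, and you rightly note that strict positivity of $\mathcal H^2$ on the leaves makes a one-point tangency comparison enough.

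Beyond the collar you take a genuinely different route. The paper works with the explicit interpolant $\widehat f=f(A)+\int\zeta$ and uses $\zeta'\le0$ to reduce to $\widetilde u_m'(s_o)^2>\sup D^2x^2$ over the gluing range. It then shrinks $s_o-\widetilde s$ by a continuity argument, and handles the bent Schwarzschild piece by taking the deformation small. You instead use $\m'=\tfrac12 f'f^2\mu$, which is correct. Then $f'>0$ and the DEC make the Hawking mass monotone, giving $\m\le m<\tfrac12 f(b)\le\tfrac12 f$ beyond the collar. This needs no further smallness choices, and it applies uniformly to the bending region and to any far-field cut-off of $x$. It is cleaner and more robust than the paper's argument.

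Two corrections are needed.

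First, with $\widetilde x=Dx$, \eqref{eq_hawkingenergy} gives $f'^2=1-2\m/f+D^2x^2$, so the identity is $f'^2-D^2x^2=1-2\m/f$. Your extra $(1-D^2)x^2$ term is spurious. This is harmless: positivity still follows, and you already use the correct form in the cut-off case.

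Second, and more substantively, the end segment $[\tfrac{a+b}{2},b]$ of the collar is altered by the mollification in Lemma \ref{lem-gluing_appendix}, and neither of your steps covers it. Lemma \ref{lem_stmc} no longer applies there. Your bound $f\ge f(b)>2m$ also fails there: for $m$ close to $\tfrac12 f(b)$ one can have $f<2m$ on that segment, while monotonicity only gives $\m\le m$.

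The fix is the paper's continuity remark:
\begin{enumerate}
\item Run your Step 2 on the unmollified $C^{1,1}$ profile $\widehat f$. There $\m$ is Lipschitz with $\m'\ge0$ a.e., since $\mu>0$ a.e.\ on the interpolation interval.
\item Note that $\mathcal H^2=\frac{4}{f^2}\bigl(f'^2-D^2x(f)^2\bigr)$ depends only on $(f,f')$. For $\widehat f$ it is bounded below by a positive constant on the compact mollification region: on the collar part by your Step 1, and beyond $b$ by $1-2m/f(b)>0$.
\item Use $f_\varepsilon\to\widehat f$ in $C^1$ and take $\varepsilon$ small.
\end{enumerate}
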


\begin{proof}
	We first observe that it suffices to show that 
	\[
	\mathcal{H}^2_s>0
	\]
	for the leaves $\Sigma_s=\{s\}\times\Sigma$ of the canonical foliation for all $s> 0$. As the mean curvature $H_s$ of the leaves is positive by construction, we in particular then have $H_s>\btr{P_s}$ for all $s>0$. Using the strong maximum principle, this is sufficient to rule out the existence of weakly trapped surfaces by a standard argument.
	
	We now claim that 
	\[
	\mathcal{H}^2_s>0
	\]
	for all $s>0$. First, we note that
	\begin{align}\label{eq_prop_untrapped_eq1}
	\mathcal{H}^2_s=\frac{4}{f(s)^2}\left((f'(s))^2-D^2x(f(s))^2\right).
	\end{align}
	By continuity, it is straightforward to check that this strict inequality is preserved by the mollification in the Gluing Lemma \ref{lem-gluing} (if the mollification is chosen appropriately). Hence, it suffices to prove the claim individually in the collar region, the gluing region, and the exterior Schwarzschild region.
	
	As it is easy to check that
	\[
	f^3(s)\mathcal{H}^2_s=4(f(s)-2\m(s)),
	\]
	this implies the claim in the collar region, as $f(s)-2\m(s)$ is shown in the proof of Theorem \ref{thm_main1}.
	
	Moreover, as we are gluing onto an exterior Schwarzschild solution with mass $m<\tfrac12f(A)$ by construction, we have
	\[
		\mathcal{H}^2_s>0
	\]
	by the properties of the Schwarzschild spacetime as we are gluing away from the Schwarzschild horizon\footnote{This holds also for the slightly deformed Schwarzschild exterior that we actually glue to if we choose the deformation sufficiently small, which we may always do in Lemma \ref{lem_bending}}.
	
	It therefore remains to prove the claim in the gluing region prior to mollification. Recall from the proof of the Gluing Lemma (Lemma \ref{lem-gluing_appendix})\footnote{See Appendix \ref{App-A}.} that
	\[
		f(s)=\widehat{f}(s)=\int_{A}^s\zeta(\tau)\d\tau+f(A),
	\]
	in the gluing region $[A,s_o]$, where $f(A)=u(Ak)$ as in the proof of Theorem \ref{thm_main1}, and where $\zeta$ is a suitably chosen concave function such that $\zeta(A)=f'(A)$, $\zeta(s_o)=\widetilde{u}_m'(s_o)$ with $\widetilde{u}_{m}$, $s_o$ chosen as in Proposition \ref{prop_gluingschwarzschild}. More precisely, we have $s_o=\widetilde{s}+\delta$, where we may take any $\delta>0$ sufficiently small, with $\widetilde{s}$ such that
	\[
		\max\left(\widetilde{u}_{m}'(\widetilde{s}), \widetilde{u}_{m}'(s_o)\right)<f'(A)\text{ and }\widetilde{u}_m(\widetilde{s})=f(A)<\widetilde{u}_{m}(s_o).
	\]
	As $\zeta'\le 0$, in view of \eqref{eq_prop_untrapped_eq1} and the definition of $\widetilde{u}_m$ it thus suffices to show that
	\[
		\widetilde{u}'_m(s_o)^2=1-\frac{2m}{\widetilde{u}_m(s_o)}+D^2x(\widetilde{u}_m(s_o))^2>\sup\limits_{[\widetilde{u}_m(\widetilde{s}), \widetilde{u}_m(s_o)]} D^2x^2.
	\]
	Let $d$ be defined via
	\[
		3d=1-\frac{2m}{\widetilde{u}_m(\widetilde{s})}=1-\frac{2m}{f(A)}>0,
	\]
	which is positive as $f(A)>2m$ by construction, see Proposition \ref{prop_gluingschwarzschild}. By continuity, choose $\delta$ sufficiently small such that
	\begin{align*}
		\sup\limits_{[\widetilde{u}_m(\widetilde{s}), \widetilde{u}_m(s_o)]} D^2x^2&\le D^2x(\widetilde{u}_m(\widetilde{s}))^2+d\\
		1-\frac{2m}{\widetilde{u}_m(s_o)}+D^2x^2(\widetilde{u}_m(s_o))&\ge 1-\frac{2m}{\widetilde{u}_m(\widetilde{s})}+D^2x^2(\widetilde{u}_m(\widetilde{s}))-d=D^2x^2(\widetilde{u}_m(\widetilde{s}))+2d.
	\end{align*}
	Then, we find
	\[
	1-\frac{2m}{\widetilde{u}_m(s_o)}+D^2x^2(\widetilde{u}_m(s_o))-\sup\limits_{[\widetilde{u}_m(\widetilde{s}), \widetilde{u}_m(s_o)]} D^2x^2\ge d>0,
	\]
	which implies the claim and concludes the proof.
\end{proof}

We close this section by presenting two applications of Theorem \ref{thm_main1}. First, we identify criteria under which the data admits a non-time-symmetric constant mean curvature extension.

\begin{cor}\label{cor_cmc}
	Let $K_1$, $K_2$ be real constants not both equal to $0$. Then, any Bartnik data $(\Sigma,\gamma_o,H_o,P_o)$, with strictly positive scalar curvature, where $H_o$, $P_o$ are constant with $H_o\ge \btr{P_o}>0$, and Hawking mass $\m_o\ge 0$, satisfying
	\begin{enumerate}
		\item[(i)] $\mathcal{H}_o^2\le \frac{4K_2^2\beta}{r_o^2\alpha\left(e^{K_2}-1\right)^2}$,
		\item[(ii)] $P_o^2\le \frac{\left(4\beta K_2^2-\alpha{\left(e^{K_2}-1\right)^2}r_o^2\mathcal{H}_o^2\right)\left(K_2r_o-K_1r_o^{-2}\right)^2}{r_o^2 \left(K_2^2+\alpha\left(K_2^2e^{2K_2}r_o^2+\left(e^{K_2}-1\right)^2K_1^2r_o^{-4}\right)\right)}$,
		\item[(iii)] $K_1\not=K_2r_o^3$,
		\item[(iv)] $(K_1-2\m_o^3K_2)^2\leq4\m_o^4(1+9K_2^2\m_o^2)$,
	\end{enumerate}
	admits a smooth non-time-symmetric extension $(M,g,K)$ that satisfies the DEC, has constant mean curvature proportional to $K_2$, and contains no weakly trapped surfaces.
\end{cor}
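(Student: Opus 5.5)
The plan is to apply Theorem \ref{thm_main1} and Proposition \ref{prop_notrapped} with the CMC choice of Remark \ref{rem_examples}(i),
\[
x(r)=K_2 r-\frac{K_1}{r^2},
\]
so that the Schwarzschild exterior is a CMC graph with $\trace K=3K_2$. I take $L_2=|K_2|$ and choose $L_1$ so that $|x(r)|\le L_1+L_2r$ on $[2\m_o,\infty)$. Since conditions (i)--(ii) are written with $K_2$ in the place of $L_2$ and with the $K_1$-term contributing $K_1^2r_o^{-4}$, the intended bound is $|K_1|r^{-2}\le |K_1|r_o^{-2}$, i.e.\ $L_1=|K_1|r_o^{-2}$, valid for $r\ge r_o$; the collar only sees $u\ge r_o$, and on the exterior only monotonicity is needed.

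With these choices, condition (i) is exactly $\mathcal H_o^2<\frac{4}{r_o^2}C_1$, up to replacing strict by non-strict inequality, which I handle by a small perturbation or by the footnote-type argument in Proposition \ref{prop_collarextension}. Since $x(r_o)=K_2r_o-K_1r_o^{-2}$, condition (ii) is exactly $P_o^2\le C_2x(r_o)^2$ after multiplying numerator and denominator by $L_2^2$. Condition (iii) is $x(r_o)\neq0$. The case $K_2=0$, where $L_2=0$, is handled by the first branch of the constants: one reads (i)--(ii) in the limit $K_2\to0$, using $(e^{K_2}-1)/K_2\to1$.

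The main step is to verify that $G_x$ and $V_{x,\m_o}$ are monotone non-decreasing on $[2\m_o,\infty)$. Compute
\[
G_x=x^2+2xx'r=(K_2r-K_1r^{-2})(3K_2r+K_1r^{-2}).
\]
Expanding gives
\[
G_x=3K_2^2r^2-2K_1K_2r^{-1}-K_1^2r^{-4},
\]
so
\[
G_x'=6K_2^2r+2K_1K_2r^{-2}+4K_1^2r^{-5},
\]
which is a positive definite quadratic form in $(K_2r^{3/2},K_1r^{-5/2})$ scaled by $r^{-1}$. Hence $G_x$ is always increasing. For $V$,
\[
r^2V'=2\m_o+2r^2xx'=2\m_o+2(K_2r-K_1r^{-2})(K_2r^2+2K_1r^{-1}).
\]
Positivity of this on $[2\m_o,\infty)$ is where condition (iv) enters. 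I expect this to be the main obstacle: I would rewrite it as a polynomial in $r^3$, namely $2\m_or+2K_2^2r^4+2K_1K_2r-4K_1^2r^{-2}$ times suitable powers, and check that its minimum over $r\ge2\m_o$ is nonnegative precisely when $(K_1-2\m_o^3K_2)^2\le4\m_o^4(1+9K_2^2\m_o^2)$, evaluating at the endpoint $r=2\m_o$ and arguing monotonicity beyond it. If only weak monotonicity holds, Remark \ref{rem_schwarzschildgluing}(iii) says strictness is needed only at $r=f(b)>2\m_o$, which is available here.

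Theorem \ref{thm_main1} then gives a DEC extension, and Proposition \ref{prop_notrapped} rules out weakly trapped surfaces. It remains to check CMC. In the collar, $K=D(\ldots)$ with the same $x$, and by the choice of $\overline m$ and the relation $D=k$, the mean curvature is $D\,\trace$ of the model, i.e.\ $3DK_2$ after rescaling. In the exterior, it is $3K_2\widetilde D$ with the same $D$ carried through the gluing lemma, since $\widetilde x=Dx$ is used throughout. Therefore $\trace_g K=x'+2x/f$, with $x$ replaced by $Dx$, is the constant $3DK_2$ on all regions, including the glued region, because $\trace K$ depends only on $x$ and not on $f$. This gives constant mean curvature proportional to $K_2$, and it is non-time-symmetric since $x\not\equiv0$.
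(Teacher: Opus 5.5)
Your proposal is essentially the paper's proof. It makes the same choice $x(r)=K_2r-K_1r^{-2}$ with $L_2=|K_2|$ and $L_1=|K_1|r_o^{-2}$, and the same reduction of the monotonicity of $V_{x,\m_o}$ to nonnegativity of $K_2^2y^2+(\m_o+K_1K_2)y-2K_1^2$ for $y=r^3\ge 8\m_o^3$. Checking the endpoint suffices because the value at $y=0$ is $-2K_1^2\le 0$, and completing the square in the endpoint condition gives exactly (iv). It then applies Theorem \ref{thm_main1} and Proposition \ref{prop_notrapped}, and computes $\trace_gK=D\bigl(x'(f)+2x(f)/f\bigr)=3DK_2$, just as the paper does.

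Two small corrections. First, $x+2rx'=3K_2r+3K_1r^{-2}$, so $G_x=3K_2^2r^2-3K_1^2r^{-4}$ and $G_x'=6K_2^2r+12K_1^2r^{-5}\ge 0$; your expansion is off, though the conclusion stands. Second, matching (i)--(ii) with the $C_1$, $C_2$ conditions of Theorem \ref{thm_main1} is exact only for $K_2\ge 0$, since for $K_2<0$ one has $(e^{K_2}-1)^2<(e^{|K_2|}-1)^2$. This defect is already present in the paper's own statement and proof.
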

\begin{rem}\label{rem_cmc}
	Note that for $K_2=0$ we obtain a maximal asymptotically flat initial data set and the conditions (i) and (ii) of Corollary \ref{cor_cmc} should be understood in the limiting sense. For $K_2>0$ we obtain asymptotically hyperboloidal initial data sets, that are umbilical and isometric to AdS-Schwarzschild outside of a compact set if $K_1=0$.
\end{rem}
\begin{proof}
	Consider $x(r)=K_2r-K_1r^{-2}$. Recall that this choice of $x$ corresponds to an initial data set with constant mean curvature $3K_2$ (see Remark \ref{rem_examples} (i)), where $x(r_o)=K_2r_o-K_1r_o^{-2}\not=0$ by assumption (iii). Observe that for this choice of $x$, $G_x$ is non-decreasing for any choice of $K_1$, $K_2$ and $V_{x,\m_o}$ is non-decreasing precisely when the polynomial
	\begin{equation*}
		f(r)=r^3\m_o+K_2^2r^6+K_1K_2r^3-2K_1^2
	\end{equation*}
	is non-negative on $[2\m_o,\infty)$. Since this is a quadratic polynomial in $r^3$ with one positive and one negative root, if we can impose conditions on $K_1$ and $K_2$ such that $f(2\m_o)\geq0$ then $f(r)\geq0$ for all $r\in[2\m_o,\infty)$. This amounts to ensuring
	\begin{equation*}
		-K_1^2+4K_2\m_o^3K_1+4\m_o^4+32+\m_o^6K_2^2\geq0,
	\end{equation*}
	which is equivalent to
	\begin{equation*}
		(K_1-2\m_o^3K_2)^2\leq4\m_o^4(1+9K_2^2\m_o^2).
	\end{equation*} 
	This is precisely assumption (iv).
	
	Taking $L_2=\btr{K_2}$, $L_1=r_o^{-2}\btr{K_1}$, we note that (i) and (ii) imply that
	\[
		\mathcal{H}_o^2\le \frac{4}{r_o^2}C_1,\text{ and } P_o^2\le \frac{4}{r_o^2}C_2x(r_o)^2.
	\]
	Hence, by applying Theorem \ref{thm_main1} we can construct a smooth extension $(M,g,K)$ with $\mu\ge 0$ of the form $M=[0,\infty)\times \Sbb^2$,
	\begin{align*}
		g&=\d s^2+f(s)^2 g(s),\\
		K&=Dx'(f(s))\d s^2+Dx(f(s))f(s)g(s),
	\end{align*}
	which can be chosen to contain no trapped surfaces by Proposition \ref{prop_notrapped}. As $D>0$ is uniquely determined by $P_o^2=4D^2x(r_o)^2r_o^{-2}$, we observe that $(M,g,K)$ has constant mean curvature $\frac{3r_o^3\btr{P_o}K_2}{2\btr{K_2r_o^3-K_1}}$.
\end{proof}

Note that we have phrased the statement of Corollary \ref{cor_cmc} such that we first make a choice of $x$ which then restricts the Bartnik data that admit such an extension purely for convenience sake, as one in general wants to choose $x$, in this case the parameters $K_1$, $K_2$, with respect to given Bartnik data. 

We further give criteria when the Bartnik data admits any extension, which seem to be the mildest restriction on the data that allow our construction to work.

\begin{cor}\label{kor_any}
		Let $(\Sigma,\gamma_o,H_o,P_o,0)$ be Bartnik data with $\gamma_o$ having strictly positive scalar curvature, where $H_o$, $P_o$ are constant with $H_o\ge \btr{P_o}>0$, and Hawking mass $\m_o\ge 0$. If 
		\begin{align}\label{eq-corcond}
			\mathcal{H}_o^2&<\frac{4}{r_o^2}\frac{\beta}{\alpha},\text{ and }
			P_o^2<\frac{4\beta-\alpha r_o^2\mathcal{H}_o^2}{\alpha r_o^2},
		\end{align}
		then $(\Sigma,\gamma_o,H_o,P_o,0)$ admits an extension $(M,g,K)$ that satisfies the DEC and contains no weakly trapped surfaces.
\end{cor}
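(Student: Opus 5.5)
The plan is to apply Theorem \ref{thm_main1} with the simplest admissible choice of $x$, namely a positive constant, $x(r)\equiv L_1>0$, and then to invoke Proposition \ref{prop_notrapped} to exclude weakly trapped surfaces. With this choice $x(r_o)=L_1\neq 0$. The growth bound $\btr{x}\le L_1+L_2 r$ holds with $L_2=0$, so we are in the $L_2=0$ branch of the constants $C_1,C_2$.

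First I check the monotonicity hypotheses of Theorem \ref{thm_main1}. Since $x'\equiv 0$, we get $G_x(r)=x^2+2xx'r=L_1^2$, which is constant and hence non-decreasing. Next, $V_{x,\m_o}(r)=1-\frac{2\m_o}{r}+L_1^2$ has derivative $2\m_o/r^2\ge 0$ because $\m_o\ge 0$, so it is non-decreasing on $[2\m_o,\infty)$.

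The gluing step needs strict monotonicity of $V$ at the end of the collar. The proof of Theorem \ref{thm_main1} supplies this: there the relevant mass is $\m(A)>\m_o\ge 0$, so $V_{Dx,\m(A)}$ has derivative $2\m(A)/r^2>0$ and is strictly increasing. Hence the edge case $\m_o=0$ causes no trouble. The function $x$ is smooth on all of $[2\m_o,\infty)$, so the regularity requirements are also met.

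Next I reduce the two conditions \eqref{eq-HPcond} to \eqref{eq-corcond}. With $L_2=0$ we have $C_1=\beta/\alpha$, so the first condition is exactly the first inequality in \eqref{eq-corcond}. The second condition reads
\[
P_o^2<C_2L_1^2=\frac{(4\beta-\alpha r_o^2\mathcal H_o^2)\,L_1^2}{r_o^2(1+\alpha L_1^2)}.
\]
The first inequality of \eqref{eq-corcond} makes $4\beta-\alpha r_o^2\mathcal H_o^2>0$. The map $L_1\mapsto L_1^2/(1+\alpha L_1^2)$ is increasing and tends to $1/\alpha$ as $L_1\to\infty$. Therefore the right-hand side increases to $(4\beta-\alpha r_o^2\mathcal H_o^2)/(\alpha r_o^2)$. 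Because the second inequality of \eqref{eq-corcond} is strict, some sufficiently large $L_1$ satisfies $P_o^2<C_2L_1^2$.

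The degenerate case $\alpha=0$ (round $\gamma_o$) is read in the limiting sense. There the first condition is vacuous, $C_2=4\beta/r_o^2$, and any $L_1>r_o\btr{P_o}/(2\sqrt\beta)$ works.

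With $L_1$ fixed this way, Theorem \ref{thm_main1} produces an extension $(M,g,K)$ satisfying the DEC. Proposition \ref{prop_notrapped} then lets us choose it with no weakly trapped surfaces, which proves the corollary. The only point needing care is choosing $L_1$ large enough in the limit argument; the remaining checks are immediate.
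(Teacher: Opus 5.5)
Your proposal is correct and follows the paper's proof essentially verbatim. Both take $x\equiv L_1$ constant with $L_2=0$, observe that $G_x$ and $V_{x,\m_o}$ are non-decreasing, choose $|L_1|$ large enough that $P_o^2<C_2L_1^2$, and conclude via Theorem \ref{thm_main1} and Proposition \ref{prop_notrapped}. Your extra checks are correct refinements that the paper leaves implicit: strict monotonicity of $V_{Dx,\m(A)}$ when $\m_o=0$, and reading the case $\alpha=0$ in the limiting sense.
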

\begin{rem}\label{rem_any}\,
	\begin{enumerate}
		\item[(i)] We state condition \eqref{eq-corcond} as is, to better compare to Theorem \ref{thm_main1}. However, it is in fact equivalent to \eqref{eq-introAcond} appearing in Theorem A of the introduction.
		\item[(ii)] Our choice of $x$ in the collar extension seems to be optimal in terms of the constants $C_1$, $C_2$ in Remark \ref{rem_collarconstruction_simplified}. Recall from Remark \ref{rem_schwarzschildgluing} that $x$ may be varied in any way in the exterior Schwarzschild region, so the asymptotics of the constructed extension can be chosen as desired.
	\end{enumerate}
\end{rem}
\begin{proof}
	Choose $x$ constant, $x\equiv L_1$, $L_2=0$. Observe that
	\[
		\mathcal{H}_o^2\le \frac{4}{r_o^2}C_1,
	\]
	and $G_x$, $V_{x,\m_o}$ are non-decreasing on $[2\m_o,\infty)$ for any $L_1\not=0$. Finally, choose $\btr{L_1}$ sufficiently large such that
	\[
			P_o^2<\frac{4\beta-\alpha r_o^2\mathcal{H}_o^2}{(1+\alpha L_1^2) r_o^2}L_1^2=C_2x(r_o)^2.
	\]
	The claim then follows from Theorem \ref{thm_main1} and Proposition \ref{prop_notrapped}.
\end{proof}

\section{Estimating the Bartnik mass from extensions}\label{sec-estimate}

For any choice of the function $x$ in the construction of extensions in the preceding section, we find that the mass of the extension is equal to the Hawking mass at the end of the collar. In particular, we obtain different estimates for each choice of $x$. However, just as in the time-symmetric case, we do not expect that there is any optimal choice for this. For this reason, we elect to only present one example of this that we believe demonstrates nice qualitative properties. However, we emphasise that one can obtain many different estimates using this construction.

Taking the initial data of the form \eqref{eq_ansatz_collar}, we set $x(r)=\frac{B}{\sqrt{r}}$ for some constant $B\neq0$, as this satisfies $G_x\equiv0$ and therefore simplifies \eqref{eq_dec_collar} considerably. In particular, the last term in \eqref{eq_dec_collar} vanishes for any choice of parameter $\overline{m}$, so we can obtain a collar without fixing the value of $\overline{m}$ as was done in the previous section. We simply require
\begin{equation}\label{eq-DEC-simple}
	2\beta-2k^2-\frac{u^2\alpha}{A^2}\geq0,
\end{equation}
and therefore the collar used for this estimate is slightly different (and simpler) than the general one constructed in the preceding section. Note that since we fix a choice of $x$, it is easier to absorb the constant $D$ from our ansatz into the constant $B$. That is, take $D=1$ and therefore $B=\tfrac12 P_o r_o^{3/2}$.

For this choice of $x$, $u$ satisfies
\begin{equation*}
	u'=\sqrt{1+\frac{B^2-2\overline{m}}{u}},
\end{equation*}
where $\overline{m}$ remains a free parameter. A clean choice we therefore now make is to set $\overline{m}=\tfrac12 B^2$, so $u(Aks)= r_o+Aks$ on $[0,1]$. In this case, to satisfy \eqref{eq-DEC-simple} we may simply ask for
\begin{equation}
	\beta-k^2-\frac{r_o^2+A^2k^2}{A^2}\alpha >0.
\end{equation}
This is easily achieved by choosing
\begin{align}\begin{split}\label{eq-Achoice}
	A&=r_o\sqrt{\alpha}\left( \beta-(\alpha+1)k^2 \right)^{-1/2}\\
	&=r_o\sqrt{\alpha}\left( \beta-(\alpha+1)\left( \frac{H_o^2r_o^2}{4} \right) \right)^{-1/2}\\
	&=r_o\,\sqrt{ \frac{\alpha}{\beta-(\alpha+1)\frac{H_o^2r_o^2}{4}}}\end{split}
\end{align}
provided 
\begin{equation}\label{eq-simple-hold-requirement}
\beta>(\alpha+1)\frac{H_o^2r_o^2}{4}.
\end{equation}
A quick calculation  shows that
\begin{align*}
	\mathcal{H}^2(s)&=H(s)^2-P(s)^2=\frac{H_o^2r_o^2}{u^2}-\frac{r_o^3}{u^3}P_o^2\\
	&> \frac{r_o^2}{u^2}\left( H_o^2-P_o^2\right)\ge0,
\end{align*}
for $s>0$, where we write $u=u(Aks)$ above, which excludes MOTS in the collar by a standard argument as before.

We then find that the Hawking mass at the end of the collar is given by
\begin{align*}
	\m_H(1)&=\frac{u(Ak)}{2}\left( 1- \frac{u(Ak)^2}{4}\mathcal H(1)^2 \right)\\
	&=\frac{u(Ak)}{2}\left( 1- \frac{u(Ak)^2}{4}\left(\frac{H_o^2r_o^2}{u(Ak)^2}-\frac{r_o^3}{u(Ak)^3}P_o^2 \right) \right)\\
	&=\frac{u(Ak)}{2}\left(   1-\left( \frac{H_o^2r_o^2}{4}-\frac{P_o^2r_o^2}{4}\frac{r_o}{u(Ak)}  \right)   \right)\\
	&\leq \m_H(0)+ \frac{Ak}{2}\left(1-\frac{H_o^2r_o^2}{4}\right),
\end{align*}
where $A$ is given by \eqref{eq-Achoice}. Note that as either $\alpha$ or $H_o$ tend to zero, so does $A$. Substituting the values for $A$ and $k$ gives
\begin{equation}
	\m_H(1)\leq \m_H(0)+\frac{H_or_o^2\sqrt{\alpha}(4-H_o^2r_o^2)}{8\sqrt{4\beta-(\alpha+1)H_o^2r_o^2}}.
\end{equation}

In order to obtain a Bartnik mass estimate from this simplified collar, we must also ensure that condition \emph{(iv)} of Proposition \ref{prop_gluingschwarzschild} holds, which is equivalent to $\m_H(1)> \tfrac12 B^2=\tfrac{P_o^2r_o^3}{8}$. One can quickly check from the above calculation for the Hawking mass that this is equivalent to asking
\begin{equation}\label{eq-mneginftything}
	\frac{H_o^2r_o^2}{4}<1,
\end{equation}
which follows directly from the assumption \eqref{eq-simple-hold-requirement}.
\begin{rem}
	The choice $x(r)=\frac{B}{\sqrt r}$ does not correspond to an asymptotically flat initial data set in the Schwarzschild spacetime, so after this gluing we obtain initial data that is not asymptotically flat. However, as mentioned in Remark \ref{rem_schwarzschildgluing} (ii), it is clear that in the exterior region we can interpolate between the constructed slice of Schwarzschild and any asymptotically flat slice in the same Schwarzschild spacetime.
\end{rem}
We have now established the following estimate for the Bartnik mass.
\begin{thm}\label{thm-mb-bound}
	Let $(\Sigma, \gamma,H_o,P_o,0)$ be Bartnik data with $\gamma$ having strictly positive scalar curvature, where $H_o$, $P_o$ are constant with $H_o\ge \btr{P_o}>0$, satisfying
		\begin{equation*}
			\frac{H_o^2r_o^2}{4}<\frac{\beta}{\alpha+1}.
		\end{equation*}
	Then the Bartnik mass of $(\Sigma, \gamma,H_o,P_o,0)$ satisfies
	\begin{equation*}
		\m_B(\Sigma, \gamma,H_o,P_o,0)\leq \m_H(0)+\frac{H_or_o^2\sqrt{\alpha}(4-H_o^2r_o^2)}{8\sqrt{4\beta-(\alpha+1)H_o^2r_o^2}}.
	\end{equation*}
\end{thm}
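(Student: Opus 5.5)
The plan is to assemble the construction carried out just above the statement into an admissible extension and read off its ADM mass. Set $x(r)=B/\sqrt r$ with $B=\tfrac12P_or_o^{3/2}$, $D=1$ and $\overline m=\tfrac12B^2$. Then $u(Aks)=r_o+Aks$, and the collar \eqref{eq_ansatz_collar} has $k=H_or_o/2$ by \eqref{eq-kHP}. Since $G_x\equiv0$, the strict DEC reduces to \eqref{eq-DEC-simple}. With $A$ chosen as in \eqref{eq-Achoice}, this holds precisely because of the hypothesis $\tfrac{H_o^2r_o^2}{4}<\tfrac{\beta}{\alpha+1}$, which is \eqref{eq-simple-hold-requirement}. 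The case $\alpha=0$ is trivial: $\gamma$ is round and $A\to0$. The computation above also gives $\mathcal H^2(s)>0$ for $s>0$, and it gives the Hawking mass bound
\[
\m_H(1)\le \m_H(0)+\frac{H_or_o^2\sqrt{\alpha}(4-H_o^2r_o^2)}{8\sqrt{4\beta-(\alpha+1)H_o^2r_o^2}}.
\]

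Next I would glue the spherically symmetric end of the collar to Schwarzschild using Proposition \ref{prop_gluingschwarzschild}, with $f(s)=u(ks)$ after the rescaling $s=At$ used in the proof of Theorem \ref{thm_main1}. I would check the hypotheses one at a time. Condition (i) is the strict DEC of the collar. Condition (ii) follows from $\mathcal H^2(1)>0$ and Remark \ref{rem_schwarzschildgluing}(i). Condition (iii) holds because $G_x\equiv0$. For condition (iv), $V_{x,\m(1)}(r)=1+(B^2-2\m(1))/r$, which is strictly increasing iff $\m(1)>\tfrac12B^2$. By the computation above this is equivalent to \eqref{eq-mneginftything}, and that follows from the hypothesis since $\beta\le1$.

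The proposition then yields DEC data agreeing with a Schwarzschild slice of any mass $m\in(\m(1),\tfrac12f(1))$ outside a compact set. Following Remark \ref{rem_schwarzschildgluing}(ii), I would cut $x$ off to zero in the exterior region. This interpolates to the time-symmetric Schwarzschild slice of the same mass, preserves $\mu\ge0=|J|$, and makes the data asymptotically flat with ADM mass $m$.

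Admissibility would follow as in Proposition \ref{prop_notrapped}. The leaves have $\mathcal H^2>0$ in the collar, and this persists through the gluing region for $\delta$ small by the same continuity argument. In the exterior the Schwarzschild leaves lie outside $r=2m$, and the cutoff region stays untrapped because the metric there is Schwarzschild with $r>2m$ and the leaves have $H>|P|$ once the cutoff is chosen slowly enough (or $P=0$ far out). The maximum principle then excludes generalised apparent horizons enclosing $\partial M$.

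This gives $\m_B\le m$ for every $m>\m(1)$. Letting $m\downarrow\m(1)$ and inserting the Hawking mass bound proves the theorem. I expect the main obstacle to be confirming that the cutoff of $x$ in the exterior keeps both the DEC and the absence of trapped surfaces. Since the data is a genuine slice of vacuum Schwarzschild there, the DEC is automatic. For no trapping, I would interpolate between two graph functions at radii $r\gg 2m$, where $1-2m/r$ dominates and $H^2>P^2$ is easily preserved.
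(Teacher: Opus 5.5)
Your proposal is correct and follows the paper's argument essentially step for step: the choice $x=B/\sqrt r$, $D=1$, $\overline m=\tfrac12B^2$ giving $u(Aks)=r_o+Aks$; $A$ as in \eqref{eq-Achoice}; the Hawking mass computed at the end of the collar; condition (iv) verified via $\m_H(1)>\tfrac12B^2$; gluing through Proposition \ref{prop_gluingschwarzschild}; and interpolation to an asymptotically flat Schwarzschild slice. Your worry about the cutoff region is unnecessary. For any spherically symmetric slice of Schwarzschild, whatever $x$ is, the leaves satisfy $\mathcal H^2=\tfrac{4}{f^2}\bigl(1-\tfrac{2m}{f}\bigr)>0$ and $H=2f'/f>0$, and such slices are vacuum, so both the DEC and the absence of trapped surfaces there are automatic.
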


\begin{rem}
	Since the choice $x(r)=\frac{B}{\sqrt r}$ gives $u'(s)=\sqrt{1-\frac{2\overline{m}-B^2}{u(s)}}$, the construction above in this section is essentially identical to the time-symmetric case with mass parameter equal to $(\overline{m}-\tfrac{B^2}{2})$. It is therefore possible to use this choice to obtain a variety of very similar estimates following the different estimates in the time-symmetric case (for example, \cite{CabreraPacheco2017-AF-CMC-Bartnik,MiaoPiubello2024,MiaoWangXie2020}).
\end{rem}

\section{Reduction to time-symmetric estimates}\label{sec-reduct}
In this section we give an alternate method for constructing estimates for the Bartnik mass outside of symmetry, by connecting non-time-symmetric initial data to time-symmetric initial data. Specifically, we prove Theorem \ref{thm-mass-est-2} from the Introduction.

This section closely follows the work of Lin \cite{Lin2024-Spin-Creased} in his PhD thesis. We begin with a general method for constructing initial data on $\Sbb^2\times[0,1]$ with prescribed Bartnik boundary data $(\gamma,H_o,P_o,\omega^\perp=0)$. We consider the ansatz
\begin{align}\label{eq-lin_ansatz}
	g&=\varepsilon^2\d s^2+e^{2\varepsilon f}\gamma,\\
	K&=\varepsilon^{2}b \d s^2+ae^{2\varepsilon f}\gamma,
\end{align}
where $\varepsilon$ is a small positive constant, and $f,a,b$ are smooth functions on $[0,1]$.

Along the leaves $\Sigma_s$ of constant $s$, we have $H=2f'$ and $P=\trace_\Sigma(K)=2a$. The Bartnik data then imposes
\begin{align*}
	f(0)=0,\qquad
	f'(0)=\frac12 H_o,\qquad
	a(0)=\frac12 P_o.
\end{align*}
Note that $b$ remains freely specifiable.
We next record some quantities that will be useful.
\begin{align*}
	A_{AB}=f'e^{2\varepsilon f}g_{AB}=\frac12 He^{2\varepsilon f}g_{AB},\qquad\\
	\trace_g(K)=2a+b,\qquad\qquad
	|K|_g^2=2a^2+b^2,
\end{align*}
where $A$ is the second fundamental form of $\Sigma_s$. Using the above and the evolution formula for $H$,
\begin{equation*}
	H'=-\varepsilon(|A|_\Sigma^2+\ric_{\nu\nu}),
\end{equation*}
the scalar curvature of $g$ can be computed as
\begin{equation}
	R_g=e^{-2\varepsilon f}R_\gamma-\frac32 H^2-2\varepsilon^{-1}H'.
\end{equation}
Following \cite{Lin2024-Spin-Creased}, the energy and momentum densities are calculated directly as
\begin{align*}
	\mu&=\frac12e^{-2\varepsilon f}R_\gamma-\frac34 H^2-\varepsilon^{-1}H'+a^2+2ab,\\
	J_\nu&=2f'(b-a)-2\varepsilon^{-1}a',\\
	J_A&=0.
\end{align*}
In particular, we can force $J_\nu$ to vanish by choosing 
\begin{equation}
	b=a+\frac{a'}{\varepsilon f'}.
\end{equation}
The DEC now reduces to the non-negativity of $\mu$, which is now given as
\begin{equation}
	\mu=\frac12e^{-2\varepsilon f}R_\gamma-\frac34 H^2-\varepsilon^{-1}H'+3a^2+\frac{2aa'}{\varepsilon f'}.
\end{equation}
Although we could write $H$ and $P$ in terms of $f$ and $a$, it is convenient to keep $H$ and $P$ terms. We then have
\begin{equation}\label{eq-muHPconst}
	\mu=\frac12e^{-2\varepsilon f}R_\gamma-\frac34 (H^2-P^2)+\frac{2aa'-H'f'}{\varepsilon f'}.
\end{equation}
Note that the Hawking mass \eqref{eq-Hawkingmassdefn} along this collar will remain close to constant provided $\mathcal H^2 =H^2-P^2$ remains constant, since $\varepsilon$ controls the area growth and this will be chosen small. To this end, we impose $\mathcal H^2=H^2-P^2=C^2$ for some constant $C$, and differentiate this to obtain a relation between $a$ and $f$,
\begin{equation}
	2HH'=2PP'=8aa'=4f'H'=8f'f''.
\end{equation}
If $a$ and $f$ can be chosen to satisfy $aa'=f'f''$ then \eqref{eq-muHPconst} reduces to
\begin{equation}
	\mu=\frac12e^{-2\varepsilon f}R_\gamma-\frac34 C^2.
\end{equation}
This is simply achieved by first fixing $a(t)$ then defining
\begin{equation*}
	f(t)=\int_0^t \sqrt{a(\tau)^2+\frac{C^2}{4}}\,d\tau.
\end{equation*}
In particular, we can choose $a$ (and therefore also $f''$) to be identically zero on the interval $[\tfrac12,1]$ so that $P(s)$ vanishes (and $H(s)$ is constant) on $[\tfrac12,1]$.

Then we conclude that for $\varepsilon$ taken sufficiently small we need only ask that the Bartnik data satisfies
\begin{equation*}
	R_\gamma>\frac32 C^2=\frac32 \mathcal H^2
\end{equation*}
to construct a metric of the form \eqref{eq-lin_ansatz} on $[0,1]\times S^2$ such that the Bartnik data for $\Sigma_0$ is the given data, while the Bartnik data for $\Sigma_1$ is the Bartnik data $(\Sigma,(1+\delta)g,\mathcal H_o,0,0)$ for any given $\delta>0$. That is, we obtain time-symmetric Bartnik data with mean curvature equal to $\mathcal H_o$ and Hawking mass arbitrarily close to the Hawking mass of the initial Bartnik data. Furthermore, since $\mathcal H_o$ is constant along the leaves, trapped surfaces are excluded in the collar as before by the maximum principle.

More concretely, we have established the following.
\begin{thm}\label{thm_main2}
	Let $(\S,\gamma,H_o,P_o,0)$ be Bartnik data with $\gamma$ having strictly positive scalar curvature, where $H_o$, $P_o$ are constant with $H_o\ge \btr{P_o}>0$, satisfying
	\begin{equation}\label{eq-reduction-condition}
		R_\gamma>\frac32\mathcal H_o^2.
	\end{equation}
	Then there exists initial data $(\gamma,K)$ on $\S\times[0,1]$ satisfying the DEC and containing no MOTS, such that $(\S,\gamma,H_o,P_o,0)$ is the induced Bartnik data at $t=0$ and $(\S,(1+\epsilon)^2\gamma,\mathcal H_o,0,0)$ is the induced Bartnik data at $t=1$, for any $\epsilon>0$.
\end{thm}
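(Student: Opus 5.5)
The plan is to make the construction sketched just before the statement rigorous, using the ansatz \eqref{eq-lin_ansatz} on $\S\times[0,1]$ with $b=a+\frac{a'}{\varepsilon f'}$, so that $J\equiv0$ by the computation following \cite{Lin2024-Spin-Creased}. The DEC then reduces to $\mu\ge0$ with $\mu$ as in \eqref{eq-muHPconst}.

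\textbf{Choice of $a$ and $f$.} Take $a\in C^\infty([0,1])$ with $a(0)=\tfrac12P_o$ and $a\equiv0$ on $[\tfrac12,1]$. Set $C=\mathcal H_o$ and
\[
f(t)=\int_0^t\sqrt{a(\tau)^2+\tfrac{C^2}{4}}\,d\tau .
\]
Then $f(0)=0$, and $f'(0)=\tfrac12\sqrt{P_o^2+\mathcal H_o^2}=\tfrac12H_o$ because $H_o>0$. Also $f'>0$, so $b$ is smooth. Since $f'f''=aa'$, the combination $2aa'-H'f'=2aa'-2f'f''$ vanishes, and \eqref{eq-muHPconst} becomes
\[
\mu=\tfrac12e^{-2\varepsilon f}R_\gamma-\tfrac34\mathcal H_o^2 .
\]
The functions $a,f$ are independent of $\varepsilon$, so $0\le f\le f(1)$ uniformly. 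By compactness and \eqref{eq-reduction-condition}, $\min R_\gamma>\tfrac32\mathcal H_o^2$. Hence for all sufficiently small $\varepsilon$ we get $e^{-2\varepsilon f(1)}\min R_\gamma>\tfrac32\mathcal H_o^2$, which gives $\mu>0=|J|$, i.e. the strict DEC.

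\textbf{Boundary data.} At $t=0$ the induced metric is $\gamma$, with $H=2f'(0)=H_o$, $P=2a(0)=P_o$ and $\omega^\perp=K(\nu,\cdot)=0$ by the block-diagonal form. At $t=1$ the induced metric is $e^{2\varepsilon f(1)}\gamma$, with $P=0$ and $H=2f'(1)=\mathcal H_o$, since $a(1)=0$. I will check the normalisation convention for $H$ in the ansatz, namely whether the unit normal $\varepsilon^{-1}\partial_s$ produces the claimed $H=2f'$; the factors of $\varepsilon$ must be consistent, and if not I rescale $f$ appropriately. Given $\epsilon>0$, shrink $\varepsilon$ further so that $e^{\varepsilon f(1)}=1+\epsilon$ exactly. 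This is possible because $\varepsilon\mapsto e^{\varepsilon f(1)}$ is continuous and increasing from $1$, and the DEC only improves as $\varepsilon$ decreases. Equivalently, for small $\epsilon$ we set $\varepsilon=\log(1+\epsilon)/f(1)$. For large $\epsilon$ we instead fix this $\varepsilon$ and rescale the parametrisation, stretching the interval; the computation is unchanged.

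\textbf{No MOTS.} Every leaf $\Sigma_t$ has $\theta_\pm=H\pm P=2f'\pm2a$. Since $f'=\sqrt{a^2+C^2/4}\ge|a|$, we get $\theta_+\theta_-=\mathcal H_o^2$, and $\theta_+>0$ whenever $\mathcal H_o>0$. A closed MOTS $S$ would then be excluded by the strong maximum principle, applied by touching $S$ at the leaf of extremal $t$ along $S$. The comparison of null expansions there, together with $\theta_+(\Sigma_t)>0$, gives a contradiction.

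\textbf{Main obstacle.} The delicate case is $\mathcal H_o=0$, i.e. MOTS-type data. There $\theta_+\ge0$ on the leaves, but it vanishes where $a\ge0$, and the maximum principle only gives $S=$ a leaf. The leaves with $a\equiv0$ on $[\tfrac12,1]$ have $H=0$ and so are MOTS. I expect that the statement implicitly needs $\mathcal H_o>0$, or that in this case one perturbs $C$ slightly, at the cost of exactness in $\mathcal H$ at $t=1$; this is the step where I would be most careful.
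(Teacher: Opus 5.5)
Your argument is the paper's own proof: Theorem \ref{thm_main2} is established by exactly the computation preceding its statement. That computation uses the ansatz \eqref{eq-lin_ansatz}, takes $b=a+\frac{a'}{\varepsilon f'}$ to kill $J_\nu$, and sets $f'=\sqrt{a^2+\mathcal H_o^2/4}$ so that $\mu=\frac12e^{-2\varepsilon f}R_\gamma-\frac34\mathcal H_o^2$. It then chooses $a\equiv0$ on $[\frac12,1]$, takes $\varepsilon$ small, and excludes MOTS by the maximum principle. The normalisation you wanted to check is consistent: with $\nu=\varepsilon^{-1}\partial_s$ one gets $A_{AB}=f'e^{2\varepsilon f}\gamma_{AB}$, so $H=2f'$ and $P=2a$.

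Two remarks.

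\textbf{Large $\epsilon$.} Your ``stretch the interval for large $\epsilon$'' step is wrong. Precisely because the computation is unchanged, the last leaf has $\mu=\frac12(1+\epsilon)^{-2}R_\gamma-\frac34\mathcal H_o^2$ however the interval is parametrised. So when $\mathcal H_o>0$ the construction only works for $(1+\epsilon)^2<\frac{2\min R_\gamma}{3\mathcal H_o^2}$. The ``for any $\epsilon>0$'' in the statement has to be read as ``for every sufficiently small $\epsilon>0$''. That is all the paper's argument gives, and all that Corollary \ref{cor-massbound2} uses.

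\textbf{The case $\mathcal H_o=0$.} Your concern here is a genuine defect of the statement (which allows $H_o=|P_o|$), not of your proof. With $\mathcal H_o=0$ you get $f'=|a|$, which vanishes on $[\frac12,1]$. Your claim ``$f'>0$, so $b$ is smooth'' then fails: $a'/|a|$ is unbounded near the first zero of $a$, so $K$ is unbounded. Moreover the leaves on $[\frac12,1]$, and $\Sigma_0$ itself, are MOTS or MITS. So the theorem as proved needs $H_o>|P_o|$.
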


One may expect that then any admissible extension of the time-symmetric data $(\S,(1+\epsilon)^2\gamma,\sqrt{H_o^2-P_o^2},0,0)$ yields an admissible extension to the Bartnik data $(\S,\gamma,H_o,P_o,0)$, provided \eqref{eq-reduction-condition} holds. In which case, one should then be able to conclude
\begin{equation*}
	\m_B(\S,\gamma,H_o,P_o,0)\leq \m_B(\S,\gamma,\mathcal H_o,0,0).
\end{equation*}
However, there is a subtlety here that prevents us from drawing that conclusion. Namely that the standard corner-smoothing constructions affect the entire manifold in such a way that cannot ensure that MOTS are not created in the process, which illustrates also the need for a smooth gluing lemma such as Lemma \ref{lem-gluing_appendix}. A detailed discussion of this subtlety is given by Jauregui \cite{jauregui2019smoothing} for example. One could of course mildly alter the definition of Bartnik mass to use a stronger non-degeneracy condition (as in \cite{jauregui2019smoothing}) or to allow extensions of low regularity rather than smooth ones, and then draw this conclusion still. Alternatively, we could use known extensions to time-symmetric initial data that allow for smooth gluing while preserving the MOTS condition. For example, taking $\alpha$ and $\beta$ as defined above, we have the following Corollary.
\begin{cor}\label{cor-massbound2}
	Let $(\S,\gamma,H_o,P_o,0)$ be Bartnik data with $\gamma$ having strictly positive scalar curvature, where $H_o$, $P_o$ are constant with $H_o\ge \btr{P_o}>0$, satisfying
	\begin{equation}
		R_\gamma>\frac32\mathcal H_o^2.
	\end{equation}
	Then if 
		\begin{equation*}
			\frac{\mathcal H_o^2r_o^2}{4}<\frac{\beta}{1+\alpha},
		\end{equation*}
		then
		\begin{equation*}
			\m_B(\S,\gamma,H_o,P_o,0)\leq\left( 1+ \left(\frac{\alpha r_o^2\mathcal H_o^2}{4\beta-(1+\alpha)r_o^2\mathcal H_o^2}\right)^{1/2} \right)\m_H(\S,\gamma,H_o,P_o,0).
		\end{equation*}
\end{cor}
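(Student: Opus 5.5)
The plan is to concatenate two collars and a Schwarzschild exterior: the collar of Theorem \ref{thm_main2}, then the time-symmetric collar of Section \ref{sec-estimate} (with $P_o=0$, equivalently $x\equiv0$, $B=0$), then a Schwarzschild exterior. The junction between the two collars should be smooth, not merely a corner.

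\textbf{Step 1 (first collar).} Apply the construction of Theorem \ref{thm_main2}. We choose $a$ to vanish on $[\tfrac12,1]$. On that interval $P\equiv0$, $K\equiv0$, $H\equiv\mathcal H_o$, and $f''=0$. The metric there is $\varepsilon^2\d s^2+e^{2\varepsilon f}\gamma$ with $f$ linear of slope $\mathcal H_o/2$. The end leaf carries the data $((1+\epsilon)^2\gamma,\mathcal H_o,0,0)$. Its Hawking mass is
\[
(1+\epsilon)\,\m_H(\S,\gamma,\mathcal H_o,0,0)=(1+\epsilon)\,\m_H(\S,\gamma,H_o,P_o,0),
\]
because $\mathcal H^2$ is constant along the collar and the area scales by $(1+\epsilon)^2$.

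\textbf{Step 2 (second collar).} Attach the Section \ref{sec-estimate} collar with $x\equiv0$ and $D=0$, starting from $\widetilde\gamma=(1+\epsilon)^2\gamma$. Under this rescaling $\alpha$ and $\beta$ are unchanged, since they depend only on the path scaled by $r_o^2$, and $r_o$ becomes $(1+\epsilon)r_o$. The hypothesis $\mathcal H_o^2r_o^2/4<\beta/(1+\alpha)$ is strict, so it persists for $\epsilon$ small. The mass estimate of Section \ref{sec-estimate} with $P_o=0$ and $H_o$ replaced by $\mathcal H_o$ gives end Hawking mass at most
\[
\m_H(0)+\frac{\mathcal H_o r_o^2\sqrt\alpha\,(4-\mathcal H_o^2r_o^2)}{8\sqrt{4\beta-(\alpha+1)\mathcal H_o^2r_o^2}}.
\]
Using $\m_H(0)=\tfrac{r_o}{8}(4-\mathcal H_o^2r_o^2)$, this equals
\[
\m_H(0)\Big(1+\big(\alpha r_o^2\mathcal H_o^2/(4\beta-(1+\alpha)r_o^2\mathcal H_o^2)\big)^{1/2}\Big),
\]
up to $O(\epsilon)$. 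Then glue to Schwarzschild via Proposition \ref{prop_gluingschwarzschild} with $x\equiv0$. Its condition (iv) is $\m>0$, which holds when $\mathcal H_o r_o<2$. Proposition \ref{prop_gluingschwarzschild} lets the exterior mass $m$ be taken arbitrarily close to the end Hawking mass. Letting $\epsilon\to0$ and $m\downarrow\m_H(\text{end})$ yields the bound. The extension is asymptotically flat and time symmetric outside the first collar.

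\textbf{Step 3 (smooth junction).} This is the main obstacle: making the junction of the two collars smooth and keeping the DEC there. Near $s=1$ the first collar is time-symmetric, and after the reparametrisation $\sigma=\varepsilon s$ its metric is $\d\sigma^2+e^{2f}\gamma$ with $f$ linear. The second collar begins as $A^2\d t^2+u(Akt)^2r_o^{-2}\gamma(t)$ with $u(0)=(1+\epsilon)r_o$ and $\gamma'(0)$ possibly nonzero, so the two do not match to all orders automatically. I would first try to reparametrise the Miao--Xie path so that $\gamma(t)\equiv\gamma(0)$ for $t\in[0,\eta]$, which leaves $\alpha,\beta$ unchanged. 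On that initial segment both collars are warped products over the fixed metric $\gamma$. There $\mu=\tfrac12 w^{-2}R_\gamma-(w'^2+2ww'')/w^2$ with $w$ the warping factor (no $|\gamma'|^2$ term), and it is strictly positive on both sides. This positivity comes from $R_\gamma>\tfrac32\mathcal H_o^2$ on one side and from the strict DEC on the other. The task then reduces to gluing two warping functions over the same $\gamma$ with matching value and first derivative up to small errors. I would carry this out by a concave interpolation of $w'$ as in Lemma \ref{lem-gluing}, using the slack from strict positivity of $\mu$. Throughout, $H>0$ and $K=0$ on the glued region, so no MOTS is created. The mild mismatch in $w'$ (slope $\mathcal H_o/2$ in log-scale versus $kAu'$) is what must be handled, and I expect it to be absorbed by the bending argument of Lemma \ref{lem_bending}.
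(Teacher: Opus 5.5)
Your proposal is correct and is essentially the paper's argument. It uses the collar of Theorem \ref{thm_main2}, followed by the time-symmetric extension of \cite{CabreraPacheco2017-AF-CMC-Bartnik} (which is exactly the Section \ref{sec-estimate} collar with $x\equiv0$ plus a Schwarzschild end), joined smoothly after making the path $\gamma(s)$ constant near its start. That reparametrisation perturbs $\alpha$ slightly rather than leaving it unchanged, which is harmless by strictness of the hypothesis.

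The junction is easier than you expect: both sides induce the metric $(1+\epsilon)^2\gamma$, mean curvature $\mathcal H_o$ and $K=0$. So the warping factors over $\gamma$ agree to first order and only $w''$ jumps, and the mollification step in the proof of Lemma \ref{lem-gluing_appendix} suffices without any bending. Also, the end Hawking mass of the first collar is $(1+\epsilon)\frac{r_o}{2}\big(1-(1+\epsilon)^2\frac{r_o^2\mathcal H_o^2}{4}\big)$ rather than $(1+\epsilon)\m_H$, which only matters to $O(\epsilon)$.
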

\begin{proof}
	Let 
	\[
	m_*=\left( 1+ \left(\frac{\alpha r_o^2\mathcal H_o^2}{4\beta-(1+\alpha)r_o^2\mathcal H_o^2}\right)^{1/2} \right)\m_H(\S,\gamma,H_o,P_o,0),
	\]
	which is the upper bound for the Bartnik mass of the time-symmetric Bartnik data $(\S,\gamma,\mathcal H_o,0,0)$ given by Theorem 1.1 of \cite{CabreraPacheco2017-AF-CMC-Bartnik}. This upper bound is obtained by constructing admissible extensions with mass arbitrarily close to $\m_*$, so we let $(M,g_\epsilon)$ be such an extension of the Bartnik data $(\S,\gamma,\mathcal H_o,0,0)$ constructed in the proof, with ADM mass $\m=(1+\epsilon)m_*$. Note that the rescaled metric $(1+\epsilon)^2g_\epsilon$ has ADM mass $(1+\epsilon)^2m_*$ and boundary Bartnik data $(\S,(1+\epsilon)^2\gamma,(1+\epsilon)^{-1}\mathcal H_o,0,0)$. We may assume without loss of generality that the path of metrics $\gamma(s)$ defining $\alpha$ and $\beta$ is constant on some interval containing $s=0$ since any path that does not satisfy this may be approximated by a path that does with the corresponding $\alpha$ and $\beta$ arbitrarily close to the original path. This is then precisely the situation for the smooth gluing to apply (Lemma \ref{lem-gluing} with $x\equiv0$ or Lemma 2.1 of \cite{CabreraPacheco2017-AF-CMC-Bartnik}, for example). That is we can smoothly glue the collar constructed in Theorem \ref{thm_main2} to $(M,g_\epsilon)$ while preserving the DEC and without introducing any MOTS, providing an admissible extension of $(\S,\gamma,H_o,P_o,0)$ with ADM mass $\m=(1+\epsilon)^2m_*$. Taking $\epsilon\to0$ completes the proof.
\end{proof}
\begin{rem}
	Corollary \ref{cor-massbound2} is obtained using the time-symmetric estimate from \cite{CabreraPacheco2017-AF-CMC-Bartnik} but any time-symmetric estimate using a similar collar construction could be used instead. The only requirement is that the extensions can be constructed in such a way that in a neighbourhood of the boundary the metric can be expressed as
	\begin{equation*}
		ds^2+f(s)^2\gamma_o
	\end{equation*}
	for a fixed metric $\gamma_o$ on $\S$ with positive Gauss curvature.
\end{rem}

\begin{rem}\label{rem-nonzerow}
	Throughout this work we restricted our attention to the case $\omega^\perp\equiv0$, so we conclude by briefly remarking on the case of non-zero $\omega^\perp$. One can easily alter the ansatz for $K$ to include a $K_{sJ}dsdx^J$ cross term that prescribes $\omega^\perp$, and indeed carry through similar computations to those presented here. However, the expressions obtained become cumbersome quickly if one attempts to maintain the same generality of the kinds of extensions constructed. For this reason, we reserve estimates of the Bartnik mass with non-zero $\omega^\perp$ for future work.
\end{rem}

\appendix

\section{Gluing and bending lemma}\label{App-A}

	\begin{lem}[Gluing lemma]\label{lem-gluing_appendix}
		Let $f_i:[a_i,b_i]\to\mathbb R^+$, where $i=1,2$, be smooth positive functions and let $\gamma_*$ be the standard round metric on $\mathbb S^{2}$. Consider metrics $g_i$ and symmetric $2$-tensors $K_i$ defined on $[a_i,b_i]\times \mathbb S^2$ by
		\begin{align*}
		g_i&=ds^2+f_i(s)^2\gamma_*\\
		K_i&=x'(f_i(s))ds^2+x(f_i(s))f_i(s)\gamma_*,
		\end{align*}
		for some $C^1$ function $x$. Assume that
		\begin{enumerate}[label=\roman*.]
			\item $\mu_i>0$ on $[a_i,b_i]$,
			\item $f_1(b_1)<f_2(a_2)$,
			\item $0<f_1'(b_1)<\sqrt{1+x(f_1(b_1))^2+2x(f_1(b_1))x'(f_1(b_1))f_1(b_1)}$,
			\item $f_2'(a_2)\le f_1'(b_1)$,
			\item $G_x(r)$ is monotone non-decreasing on $[f_1(b_1),f_2(a_2)]$.
		\end{enumerate}
		Then, after an appropriate translation of the intervals $[a_i,b_i]$, there exists a smooth positive function $f:[a_1,b_2]\to\mathbb R^+$ such that
		\begin{enumerate}[label=\Roman*.]
			\item $f(s)=f_1(s)$ for $a_1\leq s\leq \frac{a_1+b_1}{2}$,
			\item $f(s)=f_2(s)$ for $\frac{a_2+b_2}{2}\leq s\leq b_2$,
			\item $\mu_f>0$ on $[a_1,b_2]$,
		\end{enumerate}  
		where $\mu_f$ denotes the energy density of the initial data set $(M,g,K)$, where $M=[a_1,b_2]\times \Sbb^2$,
		\begin{align}\begin{split}\label{eq-gluingform_appendix}
		g&=ds^2+f(s)^2\gamma_*\\
		K&=x'(f(s))ds^2+x(f(s))f(s)\gamma_*.
		\end{split}
		\end{align}
	\end{lem}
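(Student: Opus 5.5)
The plan follows the time-symmetric gluing of Mantoulidis and Schoen, once $\mu$ has been reduced to a differential inequality for $f$. Take Lemma \ref{lem_mu} with $A=1$, $\gamma(s)=\gamma_*$ (so $\gamma'=0$ and $\operatorname{R}_s=2$). For data of the form \eqref{eq-gluingform_appendix} this gives
\begin{equation*}
\mu_f=\frac{1}{f^2}\Big(1+G_x(f)-f'^2-2ff''\Big).
\end{equation*}
So $\mu_f>0$ is equivalent to $2ff''<1+G_x(f)-f'^2$. This condition is linear in $f''$, with coefficients depending continuously on $(f,f')$, since $x\in C^1$ makes $G_x$ continuous. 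On the pieces $[a_i,b_i]$ it holds by assumption i.

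\textbf{Step 1: a $C^{1,1}$ bridge.} Translate so that $b_1=0$. On an interval $[0,\ell]$ define $f(s)=f_1(0)+\int_0^s\zeta$, where $\zeta$ is non-increasing with $\zeta(0)=f_1'(b_1)$, $\zeta(\ell)=f_2'(a_2)$, and $\zeta>0$. The profile I would use is: keep $\zeta\equiv f_1'(b_1)$ for a length $L$, then decrease linearly to $f_2'(a_2)$ over a short fixed length. The total rise $\int_0^\ell\zeta$ depends continuously and increasingly on $L$, and it becomes arbitrarily large as $L$ grows. It is smaller than $f_2(a_2)-f_1(b_1)>0$ once $L=0$ and the decreasing part is short enough. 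By the intermediate value theorem we can therefore choose $L$ so that $f(\ell)=f_2(a_2)$, and then translate $[a_2,b_2]$ to start at $\ell$. If $f_2'(a_2)\le 0$ occurs, I would first replace $f_2'(a_2)$ by a small positive target, using assumption ii to leave room; here I assume $f_2'(a_2)>0$ as in the applications.

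\textbf{Step 2: DEC on the bridge.} In the bridge $f''=\zeta'\le0$, so it suffices to have $f'^2<1+G_x(f)$. We have $0<f'\le f_1'(b_1)$, so $f$ increases from $f_1(b_1)$ to $f_2(a_2)$. Assumption v then gives $G_x(f)\ge G_x(f_1(b_1))$, and assumption iii gives
\begin{equation*}
f'^2\le f_1'(b_1)^2<1+G_x(f_1(b_1))\le 1+G_x(f).
\end{equation*}
There is a uniform positive margin here. The resulting $f$ is $C^{1,1}$ on $[a_1,b_2]$, and at every point both one-sided values of $f''$ satisfy the strict inequality. At the junctions this holds because $\mu_i>0$ on one side and $f''\le0$ (with the strict inequality just proved) on the other. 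Assumption iv ensures that $f'$ is non-increasing across the second junction.

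\textbf{Step 3: smoothing (main obstacle).} Mollify $f$ only in small neighbourhoods of the two junctions and the kinks of $\zeta$. Then $f$ and $f'$ change by an arbitrarily small amount in $C^0$. The mollified $f''$ is an average of nearby values of $f''$, each satisfying $2ff''<1+G_x(f)-f'^2$ with a uniform margin. Because the constraint is linear in $f''$ and its coefficients vary little under $C^1$-small perturbations, the averaged $f''$ still satisfies it for a small enough mollification scale. The delicate point is that the mollification must not alter $f_1$ on $[a_1,\tfrac{a_1+b_1}{2}]$ or $f_2$ on $[\tfrac{a_2+b_2}{2},b_2]$; taking supports small enough handles this, which gives I and II. The smoothing also slightly perturbs the endpoint value $f(\ell)$. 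I would fix this by adjusting $L$ again, using the continuous dependence from Step 1, or by a partition-of-unity cutoff localized near the junctions. This yields a smooth positive $f$ with $\mu_f>0$, which is III.
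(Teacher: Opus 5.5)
Your proposal is correct and follows essentially the same route as the paper's proof: a $C^{1,1}$ bridge $f_1(b_1)+\int\zeta$ with non-increasing $\zeta$, the DEC on the bridge from $\zeta'\le 0$ together with assumption iii and the monotonicity of $G_x$, and then a localized mollification that exploits the uniform margin in the inequality $2ff''<1+G_x(f)-f'^2$, which is linear in $f''$. The differences are cosmetic. You build a piecewise-linear $\zeta$ and tune its length by the intermediate value theorem, whereas the paper fixes the gap by a translation and takes $\zeta$ smooth. Your final re-adjustment of $L$ is unnecessary, because I and II only concern regions that a cutoff mollification never touches. Finally, like the paper (which uses it only implicitly), you need $f_2'(a_2)\ge 0$; this holds in every application in the paper.
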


	\begin{proof}
		This proof follows the same ideas as the gluing lemmas in \cite{MantoulidisSchoen2015-Apparent-Horizons,CabreraPacheco2017-AF-CMC-Bartnik,CabreraPacheco2018-AH-Bartnik}. 
		
		First note that the assumptions $(ii)$ and $(iii)$ ensure that the intervals can always be translated so that
		\begin{align}\begin{split}\label{eq-translatedint}
		(a_2-b_1)f_1'(b_1)=f_2(a_2)-f_1(b_1),&\qquad \text{ if }f_1'(b_1)=f_2'(a_2),\\
		(a_2-b_1)f_1'(b_1)>f_2(a_2)-f_1(b_1)>(a_2-b_1)f_2'(b_2),&\qquad \text{ if }f_1'(b_1)>f_2'(a_2)
		\end{split}.
		\end{align}
		This ensures that we can interpolate between $f_1'(b_1)$ and $f_2'(a_2)$ with a smooth function $\zeta$ on $[b_1,a_2]$ satisfying $\zeta(b_1)=f_1'(b_1)$, $\zeta(a_2)=f'_2(a_2)$, $\zeta'\leq0$ and $\int_{b_1}^{a_2}\zeta(s)\,ds=f_2(a_2)-f_1(b_1)$. We then define $\widehat f$ on $[a_1,b_2]$ by
		\[
		\widehat f(s) = 
		\begin{cases}
		f_1(s) & \text{ if  }s\in[a_1,b_1], \\
		f_1(b_1)+\int_{b_1}^s\zeta(t)\,dt. & \text{ if  }s\in[b_1,a_2],\\
		f_2(s) & \text{ if  }s\in[a_2,b_2].
		\end{cases}
		\]
		By construction $\widehat f$ is $C^{1,1}$ on $[a_1,b_2]$ and $C^2$ everywhere except at $b_1$ and $a_2$. Note that  a metric $\gamma$ and tensor $K$ of the form \eqref{eq-gluingform} satisfy $\mu_f>0$ if and only if
		\begin{equation}\label{eq-fbis}
		f''(s)<\frac{1}{2f(s)}\left( 1-f'(s)^2+x(f(s))^2+2x(f(s))x'(f(s))f(s) \right).
		\end{equation}
		We define $\Omega[f]$ from the right-hand side of \eqref{eq-fbis} as
		\begin{equation}
		\Omega[f](s)=\frac{1}{2f(s)}\left( 1-f'(s)^2+x(f(s))^2+2x(f(s))x'(f(s))f(s) \right).
		\end{equation}
		By assumption $\Omega[f_i]-f''_i>0$ on each $[a_i,b_i]$. We now show that $\Omega[\widehat f](s)-\widehat f''(s)>0$ on $(b_1,a_2)$.

		Note that $\widehat f''\leq0$ on $(b_1,a_2)$, and
		\begin{align*}
		\widehat f'(s)\leq f'(b_1)&<\sqrt{1+x(f(b_1))^2+2x(f(b_1))x'(f(b_1))f(b_1)}\\
		&\leq\sqrt{1+x(f(s))^2+2x(f(s))x'(f(s))f(s)},
		\end{align*}
		which follows from the fact that $G$ is non-decreasing. That is, $\Omega[\widehat f]$ is positive on $[a_1,b_2]$ while $\widehat f''\leq0$ on $[a_1,b_2]\setminus\{b_1,a_2\}$. 
		
		We now smooth out $\widehat{f}$ using a standard argument, exactly as in \cite{CabreraPacheco2017-AF-CMC-Bartnik,CabreraPacheco2018-AH-Bartnik}, repeating the argument verbatim only for the sake of exposition.
		
		Let $\delta >0$ satisfy
		\begin{align*}
		\delta < b_1 - \frac{a_1+b_1}{2} \, \text{and} \,  \delta < \frac{a_2+b_2}{2}-a_2 
		\end{align*}
		and let $\eta_\delta$ be a smooth cut-off function equal to $1$ on $[b_1-\delta,a_2+\delta]$, vanishing on the set $\left[a_1,\frac{a_1+b_1}{2} \right]\cup \left[\frac{a_2+b_2}{2},b_2\right]$, and satisfying $ 0 <  \eta_{\delta}   < 1 $ elsewhere. Let $\phi:\mathbb{R}\to [0,\infty)$ be a standard smooth mollifier with compact support in $[-1,1]$ and $\int_{-\infty}^\infty\phi(t)\,dt=1$.
		
		For any $\varepsilon\in(0,\frac{\delta}{4})  $, define $f_{\varepsilon}$ by 
		\begin{align}
		f_{\varepsilon}(s)= \int_{-\infty}^\infty \widehat{f}(s-\varepsilon \eta_{\delta}(s)t)\phi(t)\, dt \quad \text{ for }\quad  s \in [a_1, b_2].  
		\end{align}
		Note that $f_{\varepsilon}$ is smooth on $[a_1,b_2]$, $f_\varepsilon\equiv \widehat{f}$ on $\left[a_1,\frac{a_1+b_1}{2} \right]\cup \left[\frac{a_2+b_2}{2},b_2\right]$ and 
		\begin{align}
		f'_{\varepsilon}(s)=\int_{-\infty}^\infty \widehat{f}' (s-\varepsilon \eta_{\delta}(s)t)(1 - \varepsilon \eta_\delta'(s) t  ) \phi(t)  \,  dt  \quad \forall\, s \in [a_1, b_2] .
		\end{align}
		Since $ \widehat{f}'$ is $C^0$ everywhere and $C^1$ on $[a_1,b_2]\setminus\{b_1,a_2\}$, standard mollification arguments give for $s \in (b_{1}-\delta,a_{2}+\delta)$
		\begin{align}\label{eq-mollfbis1}
		f''_{\varepsilon}(s) = & \  \frac{d}{ds} \left(  \int_{-\infty}^\infty \widehat{f}' (s-\varepsilon t) \phi(t)  \,  dt \right) 
		=  \int_{-\infty}^\infty \widehat{f}''  (t) \phi_\varepsilon ( s -t)   \,  dt, 
		\end{align}
		using $\eta_{\delta}(s)=1$, where $ \phi_\varepsilon(s) = \frac{1}{\varepsilon} \phi ( \frac{ s}{\varepsilon} )$. Moreover, for $s \in [a_{1},b_{2}]\setminus[b_1 -  \frac{1}{4} {\delta}, a_2 + \frac{1}{4}  \delta] $ where $\widehat{f}$ is smooth, we have
		\begin{align}\label{eq-mollfbis2} 
		\begin{split}
		f''_{\varepsilon}(s) = & \ \int_{-\infty}^\infty \widehat{f}'' (s-\varepsilon \eta_{\delta}(s)t)\,(1 - \varepsilon \eta_\delta'(s) t  )^2 \phi(t)  \,  dt  \\ 
		& \ - \varepsilon \int_{-\infty}^\infty \widehat{f}' (s-\varepsilon \eta_{\delta}(s)t)\,  \eta_\delta'' (s) t   \phi(t)  \,  dt.
		\end{split} 
		\end{align}
		We define $d>0$ by
		\[
		3d=\inf_{s\in [a_1,b_2]\setminus\{b_1,a_2\}}\left( \Omega[\widehat f](s)-\widehat f''(s) \right). 
		\]
		Since $\widehat f\in C^2\left([a_1,b_2]\setminus\{b_1,a_2\}\right)$ we have
		\[
		\widehat f''(s)\leq \Omega[\widehat f](s)-3d
		\]
		on that same set (everywhere $\widehat f''$ is defined).
		
		Now for sufficiently small $\varepsilon$, 
		\begin{equation}
		\sup_{s\in[a_1,b_2]}|\Omega [\widehat f](s)-\Omega[f_\varepsilon](s)|<d,
		\end{equation}
		since $f_\varepsilon\to \widehat f$ in $C^1$, and from \eqref{eq-mollfbis1} and \eqref{eq-mollfbis2} we have
		\begin{equation}
		f''_\varepsilon<\sup_{|t-s|}\widehat f''(t)+d.
		\end{equation} 
		Combining these, we arrive at
		\begin{align*}
		f_\varepsilon''<\Omega[\widehat f](s)-d<\Omega[f_\varepsilon](s).
		\end{align*}
		That is, $f_\varepsilon$ defines $\gamma$ and $K$ satisfying the DEC.
	\end{proof}
	
	\begin{lem}[Bending lemma]\label{lem_bending_appendix}
		Let $(M,g,K)$ be an initial data set of the form $M=[a,\infty)\times \Sbb^2$, $a\ge 0$, 
		\begin{align*}
		g&=\d s^2+f(s)^2\gamma_*,\\
		K&=x'(f(s))\d s^2+x(f(s))f(s)\gamma_*,
		\end{align*}
		such that $\mu\ge \tau$ for some $\tau\in\R$, where $f$ is a smooth, positive function, $x$ is $C^1$, and $\gamma_*$ denotes the round metric on $\Sbb^2$.\newline
		For any $s_o>a$ such that $f'(s_o)>0$ there exists $\delta>0$ such that the following holds: There exists a smooth positive function $\widetilde{f}$ on $[s_o-\delta,\infty)$ such that $\widetilde{f}(s)=f(s)$ for all $s\in[s_o,\infty)$ and the initial data set $(\widetilde{M},\widetilde{g},\widetilde{K})$, with $\widetilde{M}=[s_o-\delta,\infty)$,
		\begin{align*}
		\widetilde{g}&=\d s^2+\widetilde{f}^2(s)\gamma_*,\\
		\widetilde{K}&=x'(\widetilde{f}(s))\d s^2+x(\widetilde{f}(s))\widetilde{f}(s)\gamma_*,
		\end{align*}
		has $\widetilde{\mu}\ge \tau$ such that $\widetilde{\mu}>\tau$ on $[s_o-\delta,s_o)$. \newline
		In addition, if $f(s_o)>c>0$ and $f''(s_o)>0$, then $\delta$ can be chosen such that $\widetilde{f}(s_o-\delta)>c$, $\widetilde{f}'(s_o-\delta)<f'(s_o)$.
	\end{lem}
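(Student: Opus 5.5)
The plan is to rewrite $\mu\ge\tau$ as a second-order differential inequality for $f$ alone and then bend $f$ just to the left of $s_o$ by subtracting a flat bump. For $g=\d s^2+f^2\gamma_*$ and $K=x'(f)\d s^2+x(f)f\gamma_*$, Lemma \ref{lem_mu} with $A=1$, $\gamma'\equiv0$ and $\operatorname{R}_s=2$ gives
\[
\mu=\frac{1}{f^2}\left(1-f'^2-2ff''+G_x(f)\right).
\]
Hence $\mu\ge\tau$ is equivalent to $f''\le\Omega_\tau[f]$, where
\[
\Omega_\tau[f]:=\frac{1}{2f}\left(1-f'^2+G_x(f)-\tau f^2\right),
\]
with strict inequality iff $\mu>\tau$. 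This is the same reformulation as \eqref{eq-fbis} in the Gluing Lemma, with a $\tau$ shift.

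Set $t=s_o-s$ and define, for $\lambda>0$ to be fixed,
\[
\widetilde f(s)=f(s)-\lambda e^{-1/(s_o-s)}\quad (s<s_o),\qquad \widetilde f=f\ \text{on } [s_o,\infty).
\]
This $\widetilde f$ is smooth because the bump $h(s)=-\lambda e^{-1/(s_o-s)}$ is flat at $s_o$. A direct computation gives the following for $h$:
\begin{itemize}
\item $h<0$;
\item $h'=\lambda e^{-1/t}t^{-2}>0$;
\item $h''=-\lambda e^{-1/t}\left(t^{-4}-2t^{-3}\right)$, which is negative and of size about $-\lambda e^{-1/t}t^{-4}$ for small $t$.
\end{itemize}
Consequently
\[
\Omega_\tau[\widetilde f]-\widetilde f''=\left(\Omega_\tau[f]-f''\right)+\left(\Omega_\tau[\widetilde f]-\Omega_\tau[f]\right)-h''.
\]
The first bracket is $\ge0$ by hypothesis and $-h''\gtrsim\lambda e^{-1/t}t^{-4}$. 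For the middle term, $f$ is positive and bounded near $s_o$, so if $\Omega_\tau$ is locally Lipschitz in $(f,f')$ it is $O(|h|+|h'|)=O(\lambda e^{-1/t}t^{-2})$. This is dominated by $-h''$ for $t<\delta$ with $\delta$ small, independently of $\lambda$. That yields $\widetilde\mu>\tau$ on $[s_o-\delta,s_o)$, and $\widetilde\mu=\mu\ge\tau$ on $[s_o,\infty)$. Positivity of $\widetilde f$ holds for small $\delta$ by continuity.

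For the additional claim, assume $f(s_o)>c$ and $f''(s_o)>0$. Then $f'(s_o-\delta)\le f'(s_o)-\kappa\delta$ for some $\kappa>0$ and small $\delta$. Meanwhile $h'(s_o-\delta)=\lambda e^{-1/\delta}\delta^{-2}\ll\kappa\delta$, so $\widetilde f'(s_o-\delta)<f'(s_o)$. Also $\widetilde f(s_o-\delta)>c$ by continuity, after shrinking $\delta$ and $\lambda$ if needed. None of these smallness conditions involve $x$ beyond the Lipschitz constant of $G_x$ on the fixed compact range of $f$ near $f(s_o)$, which accounts for Remark \ref{rem_bending}.

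The main obstacle is the middle term. The lemma only assumes $x\in C^1$, so $G_x=x^2+2xx'r$ is merely continuous, and $\Omega_\tau[\widetilde f]-\Omega_\tau[f]$ is only controlled by the modulus of continuity $\omega(|h|)$. That need not beat $e^{-1/t}t^{-4}$. I would first check that in every application in the paper $x$ is smooth, so the Lipschitz estimate applies. To cover genuinely $C^1$ data, the fallback is to choose a bump $h$ that is flat at $s_o$ but decays more slowly, tuned to $\omega$ (any $h$ with $\omega(|h|)+|h'|\ll|h''|$ works). Alternatively, write $G_x(f+h)-G_x(f)=\int_0^1 G_x'(f+\theta h)h\,\d\theta$ only where $x$ happens to be $C^2$ and otherwise accept $\widetilde f\in C^2$.
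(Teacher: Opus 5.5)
Your argument works whenever $G_x=x^2+2xx'r$ is locally Lipschitz, e.g.\ $x\in C^2$. That covers the concrete choices of $x$ used later in the paper, though Theorem~\ref{thm_main1} itself only assumes $x\in C^1$. Your treatment of the additional claim is also fine.

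For the lemma as stated, with $x$ merely $C^1$, there is a genuine gap, which you flagged but did not close. Because you bend additively, $\widetilde f=f+h$, the function $G_x$ is compared at two different radii. So $\Omega_\tau[\widetilde f]-\Omega_\tau[f]$ contains $G_x(f+h)-G_x(f)$, which is controlled only by the modulus of continuity $\omega$ of $G_x$. Since $G_x=(rx^2)'$, locally every continuous function is $G_x$ for some $C^1$ function $x$. Hence $\omega$ can be as poor as $1/\log(1/\epsilon)$, and then the available bound is $\omega(\lambda e^{-1/t})\approx t$. This is vastly larger than $|h''|\approx\lambda e^{-1/t}t^{-4}$.

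Your fallbacks do not repair this. The ``tuned bump'' requires constructing a smooth flat $h$ with $\omega(|h|)\ll|h''|$, which you do not carry out, and it would make $\delta$ depend on $x$. ``Accepting $\widetilde f\in C^2$'' does nothing for the inequality and conflicts with the requirement that $\widetilde f$ be smooth. For the same reason, your claim that your smallness conditions ``account for'' Remark~\ref{rem_bending} is backwards: your $\delta$ depends on the Lipschitz constant of $G_x$, whereas the remark says $\delta$ is independent of $x$.

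The missing idea is to bend by reparametrising instead of adding a bump. Take $\widetilde f=f\circ\sigma$ with $\sigma(s)=s-\int_s^{s_o}\theta(t)\d t$, where $\theta(s)=\exp(-(s-s_o)^{-2})$ for $s<s_o$ and $\theta=0$ for $s\ge s_o$. Then $G_x(\widetilde f(s))=G_x(f(\sigma(s)))$ exactly, so every $x$-dependent term is absorbed into the hypothesis $\mu(\sigma(s))\ge\tau$:
\[
\widetilde\mu(s)=\mu(\sigma)+\frac{(1-\dot\sigma^2)\left(f'(\sigma)^2+2f(\sigma)f''(\sigma)\right)}{f(\sigma)^2}-\frac{2f'(\sigma)\,\ddot\sigma}{f(\sigma)},\qquad \sigma=\sigma(s).
\]
Here $\dot\sigma=1+\theta$, $\ddot\sigma=-2\theta\,|s-s_o|^{-3}<0$, and $\ddot\sigma/(1-\dot\sigma^2)=2/((2+\theta)|s-s_o|^3)\to\infty$. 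Using $f'(s_o)>0$, the last (positive) term therefore beats the middle one on $[s_o-\delta,s_o)$, for some $\delta$ depending only on $f$ near $s_o$.

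This is the same flat-function mechanism you use, with $-2f'\ddot\sigma/f$ playing the role of your $-h''$. The difference is that it never needs more than $x\in C^1$. Your argument for $\widetilde f(s_o-\delta)>c$ and $\widetilde f'(s_o-\delta)<f'(s_o)$, via $f''\ge\kappa>0$ near $s_o$ and flatness of the perturbation, carries over essentially verbatim.
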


	\begin{proof}
		Consider $\widetilde{f}(s)=f(\sigma(s))$ for some smooth reparametrisation $\sigma(s)$ to be determined. Then, by Lemma \ref{lem_mu}, we find that
		\begin{align*}
			\widetilde{\mu}(s)
			=&\,\frac{1}{f^2(\sigma(s))}-\frac{1}{f^2(\sigma(s))}\left((f'(\sigma(s))^2\dot{\sigma}(s)^2\right)\\
			&+\frac{2}{f(\sigma(s))}\left(f''(\sigma(s))\,\dot{\sigma}(s)^2+f'(\sigma(s))\overset{\cdot\cdot}{\sigma}(s)\right)\\
			&+\frac{1}{f^2(\sigma(s))}\left(x^2(f(\sigma(s)))+2x(f(\sigma(s)))\,x'(f(\sigma(s)))\,f(\sigma(s))\right)\\
			=&\,\mu(\sigma(s))+\frac{1}{f(\sigma(s))^2}\left((1-\dot{\sigma}(s)^2)\left(f'(\sigma(s))^2+2f(\sigma(s))\,f''(\sigma(s))\right)\right)\\
			&-\frac{2f'(\sigma(s))\overset{\cdot\cdot}{\sigma}(s)}{f(\sigma(s))}
		\end{align*}
		As $\mu\ge \tau$ by assumption, the claim thus follows if we can choose a constant $\delta>0$ and a smooth function $\sigma\colon[s_o-\delta,\infty)\to[a,\infty)$ such that
		\begin{enumerate}
			\item[(i)] $\sigma(s)=s$ on $[s_o,\infty)$,
			\item[(ii)] 
			\[
				2f(\sigma(s))\,f'(\sigma(s))\,\overset{\cdot\cdot}{\sigma}(s)<(1-\dot{\sigma}(s)^2)\left(f'(\sigma(s))+2f(\sigma(s))\,f''(\sigma(s))\right)
			\]
			on $[s_o-\delta,s_o)$.
		\end{enumerate}
		Consider the smooth function
		\[
			\sigma(s)=s-\int\limits_{s}^{s_o}\theta(t)\d t\text{ with }\theta(s)=
			\begin{cases}
				0&s\ge s_o,\\
				\exp\left(-(s-s_o)^{-2}\right)&s_o-\delta\le s<s_o,
			\end{cases}
		\]
		which satisfies (i) by definition. We now aim to show (ii):
		
		As $1\le \dot{\sigma}=1+\theta(s)\le 2$, we have $\sigma([s_o-\delta,s_o])\subseteq [s_o-2\delta,s_o]$. Hence, by continuity, there exists $0<\delta<\frac{1}{2}\btr{s_o-a}$ such that
		\begin{align*}
			&2f(\sigma(s))\,f'(\sigma(s))\ge f(s_o)\,f'(s_o)=c_1>0,\\
			&\max\left(0,f'(\sigma(s))^2+2f(\sigma(s))\,f''(\sigma(s)) \right)\le c_3,
		\end{align*}
		for all $s\in [s_o-\delta,s_o)$ for some constant $c_3\ge 0$, and where $c_1>0$ by assumption. As $\dot{\sigma}(s)>1$ on $[s_o-\delta,s_o)$, it thus suffices to show that
		\[
			\frac{c_3}{c_1}<\frac{\overset{\cdot\cdot}{\sigma}(s)}{1-\dot{\sigma}(s)^2}
		\]
		on $[s_o-\delta,s_o)$. Direct computation yields , 
		\begin{align*}
			\dot{\sigma}(s)^2&=1+2\theta(s)+\theta^2(s),\\
			\overset{\cdot\cdot}{\sigma}(s)&=-\frac{2}{\btr{s-s_o}^3}\theta(s).
		\end{align*}
		In particular,
		\[
			\frac{\overset{\cdot\cdot}{\sigma}(s)}{1-\dot{\sigma}(s)^2}=\frac{2}{(2+\theta(s))\btr{s-s_o}^3}\to \infty
		\]
		as $s\to s_o$. This implies (ii) for $\delta$ sufficiently small.
		
		Finally, assume that $f(s_o)>c>0$, $f''(s_o)>0$. By continuity, we can choose $\delta$ sufficiently small such that $\widetilde{f}(s_o-\delta)>c>0$. Additionally,
		\[
			\widetilde{f}''(s)=f''(\sigma(s))(1+2\theta(s)+\theta^2(s))-2f'(\sigma(s))\frac{\theta(s)}{\btr{s-s_o}^3},
		\]
		and as $\theta(s)\btr{s_o-s}^3\to 0$ as $s\to s_o$ we find that $\widetilde{f}''>0$ on $[s_o-\delta,\infty)$ if $f''(s_o)>0$ and $\delta$ is sufficiently small. Hence, $\widetilde{f}'$ is strictly increasing, which directly implies that
		\[
			\widetilde{f}'(s_o-\delta)<\widetilde{f}'(s_o)=f'(s_o).
		\]
		This finishes the proof.
	\end{proof}

\bibliographystyle{abbrv}
\bibliography{refs}

\end{document}